\newcommand{\abs}[1]{\left\vert#1\right\vert}
\newcommand{\R}{\mathbb{R}}
\newcommand{\im}{\mathrm{Im}\,}         
\newcommand{\lie}[1]{\mathfrak{#1}}     
\newcommand{\Z}{\mathbb{Z}}
\newcommand{\C}{\mathbb{C}}
\newcommand{\RP}{\mathbb{RP}}
\newcommand{\hook}{\lrcorner\,}
\newcommand{\LieG}[1]{\mathrm{#1}}      
\newcommand{\SU}{\mathrm{SU}}
\newcommand{\SO}{\mathrm{SO}}
\newcommand{\so}{\mathfrak{so}}
\newcommand{\su}{\mathfrak{su}}
\newcommand{\GL}{\mathrm{GL}}
\newcommand{\dfn}[1]{\emph{#1}}
\newcommand{\id}{\mathrm{Id}}   
\newcommand{\gl}{\lie{gl}}
\newcommand{\Span}[1]{\operatorname{Span}\left\{#1\right\}}
\DeclareMathOperator{\End}{End}
\DeclareMathOperator{\Sym}{Sym}
\DeclareMathOperator{\Hom}{Hom}
\DeclareMathOperator{\Aut}{Aut}
\DeclareMathOperator{\Iso}{Iso}
\DeclareMathOperator{\ad}{ad}
\DeclareMathOperator{\Ad}{Ad}
\DeclareMathOperator{\diag}{diag}
\DeclareMathOperator{\rk}{rk}
\theoremstyle{plain}
\newtheorem{proposition}{Proposition}
\newtheorem{theorem}[proposition]{Theorem}
\newtheorem{lemma}[proposition]{Lemma}
\newtheorem{corollary}[proposition]{Corollary}
\theoremstyle{definition}
\newtheorem{definition}[proposition]{Definition}
\newtheorem{example}{Example}
\theoremstyle{remark}
\newtheorem*{remark}{Remark}
\title{$\SU(3)$-holonomy metrics from nilpotent Lie groups}
\author{Diego Conti}
\begin{document}

 \maketitle

\begin{abstract}
One way of producing explicit Riemannian $6$-manifolds with holonomy $\SU(3)$ is by integrating a flow of $\SU(2)$-structures on a $5$-manifold, called the hypo evolution flow. In this paper we classify invariant hypo $\SU(2)$-structures on nilpotent $5$-dimensional Lie groups. 
We characterize the hypo evolution flow in terms of gauge transformations, and study the flow induced on the variety of frames on a Lie algebra taken up to automorphisms. We classify the orbits of this flow for all hypo nilpotent structures, obtaining several families of cohomogeneity one metrics with holonomy contained in $\SU(3)$. We prove that these metrics cannot be extended to a complete metric, unless they are flat.
\end{abstract}
This paper uses the language of hypo geometry to construct $6$-manifolds with holonomy contained in $\SU(3)$.

Consider such a six-manifold $(M,g)$, and fix an oriented hypersurface $N$ in $M$. Under suitable assumptions, the exponential map allows one to identify a neighbourhood of $N$ in $M$ with $N\times(a,b)$, and put the metric $g$ into the ``generalized cylinder form'' $g_t^2+dt^2$ (see \cite{ContiSalamon, BarGauduchonMoroianu}); here $t$ parametrizes the interval $(a,b)$, and  $g_t$ is a one-parameter family of Riemannian metrics on $N$, identified with $N\times\{t\}$. More precisely, the holonomy reduction determines an integrable $\SU(3)$-structure on $M$, inducing in turn a \dfn{hypo} $\SU(2)$-structure on each $N\times\{t\}$, or equivalently a one-parameter family of hypo $\SU(2)$-structures on $N$. Integrability implies that this one-parameter family satisfies the \dfn{hypo evolution equations} \cite{ContiSalamon}, and vice versa. From this point of view, the $\SU(3)$ holonomy metric is determined by a one-parameter family of hypo $\SU(2)$-structures satisfying these equations.

In the case that a group acts on $M$ with cohomogeneity one, a generic orbit is a hypersurface $N$ with a \emph{homogeneous} hypo structure. The evolution equations are then reduced to an ODE. We are interested in solving explicitly these ODE's in the special case where the homogeneous orbit is a nilpotent Lie group, or equivalently a nilmanifold. It is known by \cite{ContiSalamon} that $6$ out of the $9$ simply connected real nilpotent Lie groups of dimension $5$ admit an invariant hypo structure. In this paper we refine this result, and classify the space of invariant hypo structures on each of the six Lie groups; moreover, we solve the evolution equations in each case, in a sense explained later in this introduction.

Thus, we obtain several families of metrics with holonomy contained in $\SU(3)$ on products $G\times I$, with $G$ a nilpotent Lie group and $I$ a real interval. Such a metric is complete if and only if $I=\R$ and the evolution is trivial; otherwise, one might hope to obtain a complete metric by adding one or two special orbits. Using the classification of orbits, we prove  this cannot be done. In other words, we prove that every complete six-manifolds with an integrable $\SU(3)$-structure preserved by the cohomogeneity one action of a nilpotent five-dimensional Lie group is flat.

\medskip
Throughout this paper, an $\SU(2)$-structure  on a $5$-manifold is a reduction to $\SU(2)$ of the frame bundle, relative to the chain of inclusions \[\SU(2)\subset\SO(4)\subset\SO(5)\subset \GL(5,\R).\]
Such a structure can be characterized in terms of a $1$-form $\alpha$ and three $2$-forms $\omega_1$, $\omega_2$, $\omega_3$, or equivalently by the choice of an orientation and three differential forms $(\omega_1,\psi_2,\psi_3)$, related to the others by $\psi_k=\omega_k\wedge\alpha$. In this paper we give an intrinsic characterization of triples of differential forms $(\omega_1,\psi_2,\psi_3)$ that actually determine an $\SU(2)$-structure.

Using this language, we give an alternative description of the hypo evolution equations in terms of a Hamiltonian flow inside the product of cohomology classes
\[[\omega_1]\times[\psi_2]\times[\psi_3]\times\left[\frac12\omega_1\wedge\omega_1\right],\]
in analogy with Hitchin's results concerning half-flat evolution \cite{Hitchin:StableForms}. Similar arguments lead us to restate the hypo evolution equations in terms of gauge transformations, in the same vein as \cite{Stock}.

In the special context of invariant structures on Lie groups --- which we view as structures on the associated Lie algebra $\lie{g}$ --- the language of gauge transformations makes hypo evolution into a flow on the space of frames $\Iso(\R^5,\lie{g})$. This flow is invariant  under both the structure group $\SU(2)$, acting on the right, and the group of automorphisms $\Aut(\lie{g})$, acting on the left. In fact the quotient $\Iso(\R^5,\lie{g})/\Aut(\lie{g})$ can be immersed naturally in the affine variety
\[\mathcal{D}=\{d\in\Hom(\R^5,\Lambda^2\R^5),d_2\circ d=0\},\]
where $d_2\colon\Lambda^2\R^5\to\Lambda^3\R^5$ is the linear map induced by $d$ via the Leibniz rule; one can think of $\mathcal{D}$ as the space of five-dimensional Lie algebras with a fixed frame taken up to automorphisms. The variety $\mathcal{D}$ is the natural setting to study geometric structures on Lie algebras; see for example \cite{Lauret}, or \cite{DeAndres:HypoContact}, where hypo-contact structures on solvable Lie groups are classified in terms of a frame adapted to the structure. It is therefore inside $\mathcal{D}$ that we carry out the classification of hypo structures and integral lines of the evolution flow.

In fact, we are only studying nilpotent Lie algebras, which correspond to a subvariety of $\mathcal{D}$. Hypo nilpotent Lie algebras give rise to three families in $\mathcal{D}$, closed under $\LieG{U}(2)$-action. These families can be identified by selecting a slice with respect to this action: the resulting sets we obtain are semi-algebraic in $\mathcal{D}$. What is more surprising is that the integral lines of the evolution flow turn out to be semi-algebraic sets themselves. The classification of hypo structures is based on both the methods of \cite{Salamon:ComplexStructures,ContiSalamon} and the more recent methods of \cite{ContiFernandezSantisteban}. The classification of orbits is a long but standard computation, relying on the determination of first integrals.

Computing the integral lines in $\mathcal{D}$ is not quite the same as solving the hypo evolution equations on each Lie algebra, but it is sufficient to determine many interesting properties of the resulting $\SU(3)$-holonomy metrics, like their curvature. This is explained in Section~\ref{sec:examples}, where we illustrate with examples how one can use the classification of the integral lines to determine whether the corresponding $6$-dimensional metric is irreducible.

In the final section of this paper, we prove that the cohomogeneity one metrics we have obtained cannot be extended by adding a special orbit. In particular, this means that they cannot be extended to a complete metric, unless they are complete to begin with, in which case they are flat.

\medskip
\noindent\textbf{Acknowledgements.}
The first part of this paper is based on the author's  \emph{Tesi di Perfezionamento} at Scuola Normale Superiore di Pisa \cite{thesis}, written under supervision of Simon Salamon.

\section{$\SU(2)$-structures revisited}
\label{sec:SU2}
We are interested in $\SU(2)$-structures on $5$-manifolds, where $\SU(2)$ acts on $\R^5$ as $\C^2\oplus\R$. In this section, we shall work at a point, that is to say on a real vector space $T=\R^5$. Thus, an $\SU(2)$-structure on $T$ consists in the choice of an \dfn{adapted} frame $u\colon \R^5\to T$, determined uniquely up to right $\SU(2)$ action.

Another convenient description introduced in \cite{ContiSalamon} arises from viewing $\SU(2)$ as the intersection of the stabilizers in $\GL(5,\R)$ of four specific elements of $\Lambda(\R^5)^*$, namely
\[e^5,\quad e^{12}+e^{34}, \quad e^{13}+e^{42}, \quad e^{14}+e^{23}.\]
Here $e^1,\dotsc, e^5$ denotes the standard basis of $(\R^5)^*$ and, say, $e^{12}$ stands for $e^1\wedge e^2$. Thus, an $\SU(2)$-structure on $T$ can be identified with a quadruple $(\alpha,\omega_1,\omega_2,\omega_3)$ of elements in $\Lambda T^*$, that are mapped respectively to the above elements, by the transpose $u^t\colon T^*\to(\R^5)^*$ of an adapted frame. Of course the forms $(\alpha,\omega_i)$ must satisfy certain conditions (see \cite{ContiSalamon}) for such a frame to exist. In this case, we say that $(\alpha,\omega_i)$ \dfn{defines} an $\SU(2)$-structure.

On the other hand, $\SU(2)$ is also the intersection of the stabilizers in $\GL^+(5,\R)$ of
\[e^{12}+e^{34}, e^{135}+e^{425}, e^{145}+e^{235}.\]
Thus, an $\SU(2)$-structure can be given alternatively by the choice of an orientation and a triple $(\omega_1,\psi_2,\psi_3)$ in $\Lambda T^*$.

It is straightforward to determine $(\omega_1,\psi_2,\psi_3)$ given either an adapted frame or the quadruplet $(\alpha,\omega_1,\omega_2,\omega_3)$, the correspondence being given by
\[\psi_2=\omega_2\wedge\alpha,\quad \psi_3=\omega_3\wedge\alpha.\]
In this section we determine intrinsic conditions for a triple $(\omega_1,\psi_2,\psi_3)$ to determine an $\SU(2)$-structure, and explicit formulae for recovering $\alpha$, $\omega_2$, $\omega_3$.

\smallskip
As a first step, we construct maps
\begin{align*}
\Lambda^2T^*&\to T\otimes\Lambda^5T^* & \Lambda^3T^*&\to T^*\otimes\Lambda^5T^*& \Lambda^2T^*\times\Lambda^3T^*&\to\Lambda^5T^*\\
\omega &\to X_\omega  & \psi&\to \alpha_\psi & (\omega,\psi)&\to V^2(\omega,\psi)
\end{align*}
To this end, fix an orientation on $T$ and consider the isomorphisms
\begin{gather*}
A:\Lambda^kT^*\to\Lambda^{5-k}T\otimes\Lambda^5T^*,\quad A^*:\Lambda^kT\to\Lambda^{5-k}T^*\otimes\Lambda^5T,\\
k!\langle\gamma, \eta\rangle = \eta\wedge A\gamma=\gamma\wedge A^*\eta, \quad \gamma\in\Lambda^kT^*, \eta\in\Lambda^{k}T.
\end{gather*}
In the definition of $A$ (and similarly for $A^*$) we are thinking of elements of $\Lambda^{5-k}T\otimes\Lambda^5T^*$ as multivectors whose length depends on the choice of a volume form. The pairing $\langle,\rangle$ is the usual pairing between $\Lambda^kT^*$ and $\Lambda^kT$, i.e.
\[\langle \eta_1\wedge \dotsb \wedge \eta_k, v_1\wedge \dotsb \wedge v_k\rangle=\frac1{k!}\det(\eta_i(v_j))_{ij}.\]
Moreover, we are silently performing the identification
\[\Lambda^5T^*\otimes\Lambda^5T\cong \R, \quad (\eta,v)\to 5!\langle \eta,v\rangle.\]
The coefficients $k!$ guarantee that if we fix a metric on $T$ and  identify $T=T^*$, $\Lambda^5T^*=\R$ accordingly, then $A$ is just the Hodge star. Indeed, using the fact that $T$ is odd-dimensional, we have the usual properties:
\begin{equation}
 \label{eqn:theA}
\begin{aligned}
k!\langle\gamma, A\beta\rangle &= \beta\wedge \gamma, & \gamma&\in\Lambda^kT^*, \beta\in\Lambda^{5-k}T^*;\\
(5-k)!\langle A\gamma, \beta\rangle& = k!\langle \gamma, A\beta\rangle, & \gamma&\in\Lambda^kT^*, \beta\in\Lambda^{5-k}T^*;\\
Y\wedge(A\psi)&=A(Y\hook\psi), &\psi&\in\Lambda^kT^*, Y\in T.
\end{aligned}
\end{equation}
Similar properties hold for $A^*$.

We can now set
\[X_\omega=A(\omega^2), \quad \alpha_\psi=A^*((A\psi)^2), \quad V^2(\omega,\psi)=\alpha_\psi(X_\omega).\]
Notice that $\omega$ and $\psi$ play a symmetric r\^ole, as one can make explicit by interchanging $T$ with $T^*$ and composing with $A$. If $V^2(\omega,\psi)$ admits a ``square root'' in $\Lambda^5T^*$,  we write $V^2(\omega,\psi)>0$; having fixed an orientation on $T$,  we define
$V(\omega,\psi)$ as the positively oriented square root of $V^2(\omega,\psi)$.
The condition \[\omega^2(A\psi\wedge A\psi)=V^2(\omega,\psi)>0\]
implies that both $\omega$ and $\psi$ are stable in the sense of Hitchin (\cite{Hitchin:StableForms}), i.e. their $\GL(5,\R)$-orbits are open. It also implies that
\[\ker\alpha_\psi \oplus \Span{X_\omega}\]
is a direct sum. Finally, it contains an orientation condition, as shown by the example
\[V^2(e^{12}+e^{34}, e^{125}-e^{345})<0.\]

We can now state and prove a condition for $(\omega_1,\psi_2,\psi_3)$ to define an $\SU(2)$-structure.
\begin{proposition}
\label{prop:SL2CAndSU2} Given $\omega_1$ in $\Lambda^2T^*$ and $\psi_2,\psi_3$ in $\Lambda^3T^*$, then $(\omega_1,\psi_2,\psi_3)$ determines an $\SU(2)$-structure, in the sense that
\[\omega_1=e^{12}+e^{34}, \psi_2=e^{135}+e^{425}, \psi_3=e^{145}+e^{235}\]
for some coframe $e^1,\dotsc, e^5$, if and only if
\begin{equation}
\label{eqn:omegapsipsi}
 \alpha_{\psi_2}=\alpha_{\psi_3},\quad \omega_1\wedge\psi_2=0=\omega_1\wedge\psi_3,\quad (X_{\omega_1}\hook\psi_2)\wedge\psi_3=0, \quad V^2(\omega_1,\psi_2)>0,
\end{equation}
and $\omega_1(Y,Z)\geq0$
 whenever  $Y,Z$ in $T$ satisfy $Y\hook\psi_2=Z\hook\psi_3$.
\end{proposition}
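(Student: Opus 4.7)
The proof separates into necessity and sufficiency. The necessity direction is routine: all of $X_\omega$, $\alpha_\psi$, $V^2$ are $\GL^+(5,\R)$-equivariant (with appropriate twists by $\Lambda^5T^*$), so it suffices to verify the five conditions on the model triple $(e^{12}+e^{34},\,e^{135}+e^{425},\,e^{145}+e^{235})$ using the volume $e^{12345}$. Direct computation of $A\omega_1$, $X_{\omega_1}$, $A\psi_j$, $\alpha_{\psi_j}$ confirms the first four. For the positivity clause, writing $Y=\sum y^i e_i$, $Z=\sum z^i e_i$ and expanding $Y\hook\psi_2=Z\hook\psi_3$ componentwise forces $y^5=z^5=0$ together with $z^1=-y^2$, $z^2=y^1$, $z^3=-y^4$, $z^4=y^3$, i.e.\ $Z=J_1Y$ for the standard complex structure on $H=\ker e^5$. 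Thus $\omega_1(Y,Z)=(y^1)^2+(y^2)^2+(y^3)^2+(y^4)^2\ge0$.

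For sufficiency, the plan is to extract a 1-form $\alpha$, a transverse vector $X$, and a hyperkähler triple on $\ker\alpha$. Set $V=V(\omega_1,\psi_2)\in\Lambda^5T^*$, positively oriented by $V^2>0$. Using $V$ as reference volume, reinterpret $\alpha_{\psi_2}=\alpha_{\psi_3}$ as an honest 1-form $\alpha\in T^*$ and $X_{\omega_1}$ as a vector $X\in T$; the defining identity $V^2=\alpha_{\psi_2}(X_{\omega_1})$ normalizes to $\alpha(X)=1$. A short manipulation using $Y\wedge A\psi=A(Y\hook\psi)$ from~\eqref{eqn:theA} shows that $\omega_1\wedge\psi_2=0$ forces $X\hook\omega_1=0$, so $T=H\oplus\Span{X}$ with $H=\ker\alpha$. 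Since $\omega_1\wedge\psi_j=0$, we have $\psi_j\in\alpha\wedge\Lambda^2T^*$, so there exist unique $\omega_j\in\Lambda^2H^*$ with $\psi_j=\omega_j\wedge\alpha$ for $j=2,3$; restrict $\omega_1$ to a 2-form on $H$ as well.

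The heart of the argument is to show that $(\omega_1,\omega_2,\omega_3)$ is a hyperkähler triple on $H$, meaning a basis $e^1,\dots,e^4$ of $H^*$ exists with $\omega_1=e^{12}+e^{34}$, $\omega_2=e^{13}+e^{42}$, $\omega_3=e^{14}+e^{23}$. The condition $V^2>0$ translates into non-degeneracy of $\omega_2$ on $H$ together with a preferred orientation. The clause $(X\hook\psi_2)\wedge\psi_3=0$, after substituting $\psi_j=\omega_j\wedge\alpha$ and using $X\hook\alpha=1$, becomes $\omega_2\wedge\omega_3=0$; similarly $\omega_1\wedge\psi_j=0$ gives $\omega_1\wedge\omega_j=0$ on $H$. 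Together with a common volume normalization coming from $V^2$, these relations determine the triple up to a sign $\omega_1\mapsto\pm\omega_1$, i.e.\ up to reversing the orientation of $H$. The positivity clause $\omega_1(Y,Z)\ge0$ under $Y\hook\psi_2=Z\hook\psi_3$ pins down the correct sign, and $e^5:=\alpha$ completes the adapted coframe.

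The main obstacle is this last step. The other four conditions are symmetric under $\omega_1\mapsto-\omega_1$, so without the positivity clause one would recover an $\SU(2)$-structure only up to an orientation reversal on $H$. Establishing the equivalence $Y\hook\psi_2=Z\hook\psi_3\iff Z=J_1Y$ purely from the intrinsic data, without a pre-existing metric, is where the identities~\eqref{eqn:theA} get used most heavily.
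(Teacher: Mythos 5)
Your overall architecture for sufficiency coincides with the paper's: normalize $\alpha_{\psi_2}$ and $X_{\omega_1}$ by $V(\omega_1,\psi_2)$ to obtain $\alpha$ and $X$ with $\alpha(X)=1$, split $T=\ker\alpha\oplus\Span{X}$, produce $\omega_2,\omega_3$, derive the relations $\omega_i\wedge\omega_j=\delta_{ij}\omega_1^2$, and use the last hypothesis only to fix the orientation of $\Lambda^2_+(\ker\alpha)$. The necessity direction and the identification $Z=J_1Y$ are fine.

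There is, however, one step that fails as written: you claim that $\omega_1\wedge\psi_j=0$ implies $\psi_j\in\alpha\wedge\Lambda^2T^*$. This implication is false. The kernel of $\omega_1\wedge\cdot\colon\Lambda^3T^*\to\Lambda^5T^*$ is $9$-dimensional, whereas $\alpha\wedge\Lambda^2T^*$ is only $6$-dimensional; concretely, $\psi=e^{134}$ satisfies $(e^{12}+e^{34})\wedge\psi=0$ but is not divisible by $e^5$. The divisibility of $\psi_j$ by $\alpha$ must instead come from the identity $\alpha_\psi\wedge\psi=0$, valid for every stable $3$-form $\psi$ by the argument dual (under $A$) to the one giving $X_\omega\hook\omega^2=0$ for a stable $2$-form; combined with the hypothesis $\alpha_{\psi_2}=\alpha_{\psi_3}$, this yields $\alpha\wedge\psi_2=0=\alpha\wedge\psi_3$ and hence $\psi_j=\alpha\wedge\omega_j$ with $\omega_j=X\hook\psi_j$. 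This is exactly where the first condition of \eqref{eqn:omegapsipsi} does its work; in your write-up that condition is never used beyond naming a single $\alpha$, which is a sign the logic is off. (A smaller misattribution: $X\hook\omega_1=0$ does not require $\omega_1\wedge\psi_2=0$ either; it follows from $X_{\omega_1}\hook\omega_1^2=0$, which holds by construction of $X_{\omega_1}$, together with the injectivity of $\wedge\,\omega_1$ on $T^*$ guaranteed by stability.) The role of $\omega_1\wedge\psi_j=0$ is rather to give the off-diagonal relations $\omega_1\wedge\omega_j=0$. With these justifications repaired, your argument goes through and is essentially the proof in the paper.
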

\begin{proof}
The ``only if'' part is straightforward. Conversely, suppose \eqref{eqn:omegapsipsi} holds. In particular
\[V^2(\omega_1,\psi_2)=V^2(\omega_1,\psi_3)>0,\]
so we may define $\alpha\in T^*$ and $X\in T$ by
\[
\alpha=V(\omega_1,\psi_2)^{-1}\alpha_{\psi_2},\quad X=V(\omega_1,\psi_2)^{-1}X_{\omega_1}.
\]
Then
\[T=\Span{X}\oplus\ker\alpha, \quad \alpha(X)=1.\]
This splitting reduces the structure group to $\GL(4,\R)$; in order to further reduce to
$\SU(2)$, set
\[\omega_2=X\hook\psi_2,\quad \omega_3=X\hook\psi_3.\]
By construction $\omega$ is a non-degenerate form on $\ker\alpha$ and
$X\hook\omega_1^2=0$, so $X\hook\omega_1$ is zero as well. By duality,
\[\alpha\wedge\psi_2=0=\alpha\wedge\psi_3,\]
thus
\[\psi_2=\alpha\wedge\omega_2, \quad \psi_3=\alpha\wedge\omega_3.\]
It follows easily from \eqref{eqn:theA} that
 \begin{equation}
 \label{eqn:VFromOmega}
 V(\omega_1,\psi_2)=\langle \alpha,X_{\omega_1}\rangle = \langle  \alpha, A(\omega_1^2)\rangle = \omega_1^2\wedge\alpha.
 \end{equation}
Similarly
 \begin{multline}
 \label{eqn:VFromPsi}
 V(\omega_1,\psi_2)=\langle \alpha_{\psi_2},X\rangle = \langle  A^*(A\psi_2)^2, X\rangle =
4!\langle (A\psi_2)^2, A^*X\rangle
=X\wedge (A\psi_2)^2\\
= (X\wedge A\psi_2) \wedge A\psi_2 = 3!\langle X\wedge A\psi_2, \psi_2\rangle= \langle A(X\hook\psi_2), \psi_2\rangle = \omega_2\wedge\psi_2.
 \end{multline}
In the same way, $V(\omega_1,\psi_3)=\omega_3\wedge\psi_3$. Therefore, using \eqref{eqn:omegapsipsi} again, we find
\[\omega_i\wedge\omega_j=X\hook(\omega_i\wedge\psi_j)=\delta_{ij}(\omega_1)^2.\]
Thus there is a metric on $\ker\alpha$ such that $(\omega_1,\omega_2,\omega_3)$ is an orthonormal basis of $\Lambda^2_+(\ker\alpha)$. It is also positively oriented by the last condition in the hypothesis.
\end{proof}
\begin{remark}
 The last condition in the hypothesis of Proposition~\ref{prop:SL2CAndSU2} is an open condition.
\end{remark}

Since $X_\omega$ only depends on $\omega^2$,  it also makes sense to define $X_\upsilon$ for $\upsilon\in\Lambda^4T^*$. In order
to make subsequent formulae simpler, we introduce a coefficient and set
\[X_\upsilon=2A(\upsilon).\]
Given also $\psi$ in $\Lambda^3T^*$, we set
\[V^2(\psi,\upsilon)=\alpha_\psi(X_\upsilon),\]
and, as usual, if $V^2(\psi,\upsilon)>0$ we denote by $V(\psi,\upsilon)$ its positive square root.

\section{Hypo evolution revisited}
\label{sec:hamiltonian}
The definition of an $\SU(2)$-structure on a vector space can be extended immediately to a definition on a $5$-manifold: thus, we can identify an $\SU(2)$-structure on an oriented $5$-manifold with a triple $(\omega_1,\psi_2,\psi_3)$ of differential forms, satisfying at each point the conditions of Proposition~\ref{prop:SL2CAndSU2}. Equivalently, we can use the quadruple $(\alpha,\omega_1,\omega_2,\omega_3)$.

An $\SU(2)$-structure is called \dfn{hypo} if the forms $\omega_1$, $\psi_2$, $\psi_3$ are closed.
Hypo structures (see \cite{ContiSalamon}) arise naturally on hypersurfaces inside $6$ manifolds with holonomy contained in $\SU(3)$; a partial converse holds, in the sense that given a compact real analytic $5$-manifold $M$ and a real analytic hypo structure, there always exists a one-parameter family of hypo structures $(\alpha(t),\omega_i(t))$, coinciding with the original hypo structure at time zero, inducing an integrable $\SU(3)$-structure on the ``generalized cylinder'' $M\times(a,b)$; explicitly, the K\"ahler form and complex volume on $M\times(a,b)$ are given by
\[\omega_1(t)+\alpha(t)\wedge dt, \quad \Psi(t)=(\omega_2(t)+i\omega_3(t))\wedge(\alpha(t)+idt).\]
This one-parameter family satisfies the \dfn{hypo evolution equations}
\begin{equation}
\label{eqn:hypoevolution}
 \left\{
\begin{aligned}
 \frac{\partial}{\partial t}\omega_1&=-d\alpha\\
 \frac{\partial}{\partial t}(\omega_2\wedge\alpha)&=-d\omega_3\\
 \frac{\partial}{\partial t}(\omega_3\wedge\alpha)&=d\omega_2\\
\end{aligned}
 \right.
\end{equation}
It will be understood that a solution $(\alpha(t),\omega_i(t))$ of \eqref{eqn:hypoevolution} is required to define an $\SU(2)$-structure for all $t$.

In this section we will show that the solutions of this evolution equations, that we know exist, can be viewed as integral lines of a certain Hamiltonian vector field, in analogy with half-flat evolution (see \cite{Hitchin:StableForms}).

\smallskip
Fix a compact 5-dimensional manifold $M$; let $B^p$ be the space of exact $p$-forms on $M$. The evolution equations \eqref{eqn:hypoevolution} can be viewed as a flow of \[(\omega_1(t),\psi_2(t),\psi_3(t))\in \Omega^2(M)\times\Omega^3(M)\times\Omega^3(M);\] however, the cohomology class of $\omega_1(t)$, $\psi_2(t)$ and $\psi_3(t)$ is independent of $t$, so the flow stays inside the product of the three cohomology classes.

In order to obtain a symplectic structure, we need to add a fourth cohomology class. Let $(\tilde\omega_1,\tilde\psi_2,\tilde\psi_3)$ define a hypo $\SU(2)$-structure on  $M$, and set
\[\mathcal{\tilde H}=(\tilde\omega_1+B^2)\times (\tilde\psi_2+B^3)\times (\tilde\psi_3+B^3)\times \left(\frac{1}{2}\tilde\omega^2+B^4\right)\;.\]
This is a Fr\'echet space with the $C^\infty$ topology; it contains an open set
\[\mathcal{H}=\{(\omega_1,\psi_2,\psi_3,\nu)\in\mathcal{\tilde H}\mid V^2(\omega_1,\psi_2)>0, V^2(\psi_3,\upsilon)>0\}.\]
The skew-symmetric form on $B^2\times B^3\times B^3\times B^4$ defined by
\[\left\langle(\dot\omega,\dot\psi_2,\dot\psi_3,\dot\upsilon),(d\beta,d\tau_2,d\tau_3,d\gamma)\right\rangle=
    \int_M\left(\dot\omega\wedge\gamma-\dot\upsilon\wedge\beta-\dot\psi_2\wedge\tau_3-\dot\psi_3\wedge\tau_2\right)\]
makes $\mathcal{H}$ into a symplectic manifold.

We will say that a point $(\omega_1,\psi_2,\psi_3,\upsilon)$ of $\mathcal{H}$ defines an $\SU(2)$-structure if $\upsilon=\frac12\omega_1^2$ and the forms $\omega_1$, $\psi_2$, $\psi_3$ satisfy the conditions of Proposition~\ref{prop:SL2CAndSU2}.
\begin{remark}
Symplectic interpretation aside, the presence of the redundant differential form $\upsilon$ can be motivated by the fact that
an $\SU(3)$\nobreakdash-structure on a 6-manifold is determined by a 3-form and a 4-form, i.e. a
section of $\Lambda^3\oplus\Lambda^4$, and the pullback of this vector bundle to a hypersurface splits up as
$\Lambda^2\oplus\Lambda^3\oplus\Lambda^3\oplus\Lambda^4$.
\end{remark}

We can now state the main result of this section.
\begin{theorem}
\label{thm:hamiltonian}
Solutions of the hypo evolution equations~\eqref{eqn:hypoevolution} are integral lines of the Hamiltonian flow of the functional
\begin{equation}
 \label{eqn:functional}
H:\mathcal{H}\to\R, \quad H(\omega_1,\psi_2,\psi_3,\upsilon)=\int_M\left(V(\omega_1,\psi_2)-V(\psi_3,\upsilon)\right).
\end{equation}
\end{theorem}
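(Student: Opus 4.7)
My plan follows Hitchin's treatment of volume functionals on stable forms: compute $dH$ at a point of $\mathcal{H}$ via explicit variational formulas for $V(\omega_1,\psi_2)$ and $V(\psi_3,\upsilon)$, then identify the Hamiltonian vector field by integrating by parts against the symplectic form. The target formulas are
\[\delta V(\omega_1,\psi_2)=\dot\omega_1\wedge\omega_1\wedge\alpha+\dot\psi_2\wedge\omega_2,\qquad \delta V(\psi_3,\upsilon)=\dot\psi_3\wedge\omega_3+\dot\upsilon\wedge\alpha,\]
where $\alpha,\omega_2,\omega_3$ are the forms canonically attached to the $\SU(2)$-data by the recipe in the proof of Proposition~\ref{prop:SL2CAndSU2}.

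To derive these, fix $\psi_2$ and vary $\omega_1\mapsto\omega_1+t\dot\omega_1$: since $X_{\omega_1}=A(\omega_1^2)$ is quadratic, $\delta X_{\omega_1}=2A(\omega_1\wedge\dot\omega_1)$, so $2V\,\delta V=\alpha_{\psi_2}(2A(\omega_1\wedge\dot\omega_1))$. At a point defining an $\SU(2)$-structure one has $\alpha_{\psi_2}=V\alpha$ by construction, and applying the identity $\langle\alpha,A\beta\rangle=\beta\wedge\alpha$ from~\eqref{eqn:theA} gives $\delta V=\dot\omega_1\wedge\omega_1\wedge\alpha$. A dual computation using $A^*$ in place of $A$, together with the identity $V=\omega_2\wedge\psi_2$ from~\eqref{eqn:VFromPsi}, produces the $\dot\psi_2\wedge\omega_2$ term. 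The variations of $V(\psi_3,\upsilon)$ follow by the same scheme, exploiting that $X_\upsilon=2A\upsilon$ is linear in $\upsilon$ while the dependence on $\psi_3$ mirrors that on $\psi_2$; on the $\SU(2)$-locus the identity $V(\psi_3,\upsilon)=\omega_3\wedge\psi_3=2\upsilon\wedge\alpha$ pins down the variations in $\psi_3$ and $\upsilon$ via Euler homogeneity.

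Plugging the variations in,
\[dH(d\beta,d\tau_2,d\tau_3,d\gamma)=\int_M\bigl(d\beta\wedge\omega_1\wedge\alpha+d\tau_2\wedge\omega_2-d\tau_3\wedge\omega_3-d\gamma\wedge\alpha\bigr).\]
Comparing term-by-term with the symplectic pairing and moving $d$ by Stokes on the compact manifold $M$ (using that $\omega_1$ is closed throughout $\mathcal{H}$, so $d(\omega_1\wedge\alpha)=\omega_1\wedge d\alpha$), the Hamiltonian vector field is forced to satisfy
\[\dot\omega_1=-d\alpha,\quad \dot\psi_2=-d\omega_3,\quad \dot\psi_3=d\omega_2,\quad \dot\upsilon=-d(\omega_1\wedge\alpha)=\omega_1\wedge\dot\omega_1.\]
The first three coincide with~\eqref{eqn:hypoevolution}, while the fourth says that $\upsilon-\tfrac12\omega_1^2$ is preserved by the flow, so the $\SU(2)$-defining locus $\{\upsilon=\tfrac12\omega_1^2\}$ is invariant. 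Hence every integral line through a point of this locus projects to a hypo evolution, and every hypo evolution lifts uniquely by setting $\upsilon(t)=\tfrac12\omega_1(t)^2$. The principal obstacle is the Hitchin-style derivation of the variations of $V$; once these are in place, the rest is Stokes integration-by-parts with careful sign and degree bookkeeping.
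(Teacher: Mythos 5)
Your proposal is correct and follows essentially the same route as the paper: compute the first variation of $V$ (the paper's Lemma~\ref{lemma:dV}, which it states at general points of $\mathcal{H}$ in terms of the normalized forms $\alpha_2,\alpha_3,\omega_2,\omega_3$ rather than only on the $\SU(2)$-locus), integrate by parts against the symplectic form to identify $X_H=(-d\alpha_3,-d\omega_3,d\omega_2,-\omega_1\wedge d\alpha_2)$, and observe that the resulting system is the hypo evolution together with the preservation of $\upsilon=\tfrac12\omega_1^2$. The only loose phrase is the appeal to ``Euler homogeneity'' for the $(\psi_3,\upsilon)$ variation, which by itself only gives the radial derivative, but your accompanying remark that the computation mirrors the $(\omega_1,\psi_2)$ case supplies the actual argument, exactly as in the paper.
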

Before proving the theorem we need a few lemmas. The first lemma makes use of the fact that $V^2(\omega,\psi_2)$ and $V^2(\psi_3,\upsilon)$ are positive on $\mathcal{H}$ in order to define additional differential forms.
\begin{lemma}
\label{lemma:apointofH}
Let  $(\omega_1,\psi_2,\psi_3,\upsilon)$ be a point of $\mathcal{H}$, and let
\begin{align*}
\alpha_2&=V^{-1}(\omega_1,\psi_2)\alpha_{\psi_2},&\alpha_3&=V^{-1}(\psi_3,\upsilon)\alpha_{\psi_3},\\
\omega_2&=\left(V^{-1}(\omega_1,\psi_2)X_{\omega_1}\right)\hook\psi_2,&
\omega_3&=\left(V^{-1}(\psi_3,\upsilon)X_{\upsilon}\right)\hook\psi_3.
\end{align*}
If $(\omega_1,\psi_2,\psi_3,\upsilon)$ defines an $\SU(2)$-structure, then $\alpha_2=\alpha_3$ and $(\alpha_2,\omega_1,\omega_2,\omega_3)$ defines the same structure.
\end{lemma}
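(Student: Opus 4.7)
The plan is to reduce everything to the construction already carried out in the proof of Proposition~\ref{prop:SL2CAndSU2}: I will argue that the forms $\alpha_2,\omega_2,\omega_3$ defined in the lemma literally coincide with the forms $\alpha,\omega_2,\omega_3$ built there, once $(\omega_1,\psi_2,\psi_3)$ is known to define an $\SU(2)$-structure. The only genuinely new ingredient is the interplay between the ``$\omega_1$-side'' quantities $V(\omega_1,\psi_2),X_{\omega_1}$ and the ``$\upsilon$-side'' quantities $V(\psi_3,\upsilon),X_\upsilon$ used to define $\alpha_3$ and $\omega_3$.

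The first step is to exploit the constraint $\upsilon=\tfrac12\omega_1^2$ that is built into ``defines an $\SU(2)$-structure'' for a point of $\mathcal{H}$. Since $X_\upsilon=2A(\upsilon)$ by definition and $X_{\omega_1}=A(\omega_1^2)$, this constraint gives $X_\upsilon=X_{\omega_1}$ immediately, and therefore
\[V^2(\psi_3,\upsilon)=\alpha_{\psi_3}(X_\upsilon)=\alpha_{\psi_3}(X_{\omega_1})=V^2(\omega_1,\psi_3).\]
The second step is to invoke \eqref{eqn:omegapsipsi}. The condition $\alpha_{\psi_2}=\alpha_{\psi_3}$, combined with the formula $V^2(\omega,\psi)=\alpha_\psi(X_\omega)$, yields $V^2(\omega_1,\psi_2)=V^2(\omega_1,\psi_3)$. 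Taking positive square roots (permissible since all three $V^2$'s are positive) gives the chain
\[V(\omega_1,\psi_2)=V(\omega_1,\psi_3)=V(\psi_3,\upsilon).\]
Then
\[\alpha_3=V(\psi_3,\upsilon)^{-1}\alpha_{\psi_3}=V(\omega_1,\psi_2)^{-1}\alpha_{\psi_2}=\alpha_2,\]
which is the first half of the conclusion.

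For the second half, I compare the lemma's quadruple with the one from the proof of Proposition~\ref{prop:SL2CAndSU2}. There the distinguished vector is $X=V(\omega_1,\psi_2)^{-1}X_{\omega_1}$, the $1$-form is $\alpha=V(\omega_1,\psi_2)^{-1}\alpha_{\psi_2}=\alpha_2$, and the $2$-forms are $X\hook\psi_2$ and $X\hook\psi_3$. The lemma's $\omega_2$ is already written as $X\hook\psi_2$ verbatim. For $\omega_3$, the two identifications $X_\upsilon=X_{\omega_1}$ and $V(\psi_3,\upsilon)=V(\omega_1,\psi_2)$ established above transform the lemma's definition into $X\hook\psi_3$, matching exactly. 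Hence the four forms $(\alpha_2,\omega_1,\omega_2,\omega_3)$ are precisely the adapted quadruple produced in the proof of Proposition~\ref{prop:SL2CAndSU2}, and they therefore define the very same $\SU(2)$-structure as $(\omega_1,\psi_2,\psi_3)$.

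There is no serious obstacle here; the only conceptual point worth isolating is the symmetry $V(\omega_1,\psi_2)=V(\omega_1,\psi_3)$ that follows from $\alpha_{\psi_2}=\alpha_{\psi_3}$, and the fact that the constraint $\upsilon=\tfrac12\omega_1^2$ makes $X_\upsilon$ collapse to $X_{\omega_1}$. Everything else is a direct substitution, and no further calculation with the operators $A,A^*$ beyond their defining properties \eqref{eqn:theA} is needed.
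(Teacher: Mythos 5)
Your proof is correct and follows essentially the same route as the paper's (much terser) argument: the constraint $\upsilon=\tfrac12\omega_1^2$ gives $X_\upsilon=X_{\omega_1}$, the condition $\alpha_{\psi_2}=\alpha_{\psi_3}$ gives $V(\omega_1,\psi_2)=V(\psi_3,\upsilon)$, and then the quadruple is identified with the one constructed in the proof of Proposition~\ref{prop:SL2CAndSU2}. You have merely spelled out the substitutions that the paper leaves implicit.
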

\begin{proof}
By construction $X_{\omega_1}=X_{\upsilon}$, so $V(\omega_1,\psi_2)=V(\psi_3,\upsilon)$. Proposition~\ref{prop:SL2CAndSU2} concludes the proof.
\end{proof}

In the next lemma we work at a point, and compute the differential of
\[V\colon \Omega_1\to \Lambda^5T^*, \quad \Omega_1=\{(\omega,\psi)\in \Lambda^2T^*\times\Lambda^3T^*\mid V^2(\omega,\psi)>0\}.\]
The differential of this map at a point is an element of
\[(\Lambda^2T^*\oplus\Lambda^3T^*)^*\otimes\Lambda^5T^*\cong(\Lambda^2T\oplus\Lambda^3T)\otimes\Lambda^5T^*\cong\Lambda^3T^*\oplus\Lambda^2T^*.\]
Similarly, the differential at a point of
\[V\colon \Omega_2\to \Lambda^5T^*, \quad \Omega_2=\{(\psi,\upsilon)\in \Lambda^3T^*\times\Lambda^4T^*\mid V^2(\psi,\upsilon)>0\}\]
is an element of $\Lambda^2T^*\oplus T^*$.
\begin{lemma}
\label{lemma:dV} Given $\omega$, $\psi$ and $\upsilon$ such that $(\omega,\psi)$ is in $\Omega_1$ and $(\psi,\upsilon)$ is in $\Omega_2$, set
\[\hat\omega=\left(V^{-1}(\omega,\psi)\alpha_\psi\right)\wedge\omega, \quad \hat\psi=\left(V^{-1}(\omega,\psi)X_\omega\right)\hook\psi, \quad
\check\psi=\left(V^{-1}(\psi,\upsilon)X_{\upsilon}\right)\hook\psi;\]
then
\[
 dV_{(\omega,\psi)}(\sigma,\phi)=\hat\omega\wedge\sigma+\hat\psi\wedge\phi, \quad
dV_{(\psi,\upsilon)}(\phi,\sigma)=\check\psi\wedge\phi+\left(V^{-1}(\psi,\upsilon)\alpha_\psi\right)\wedge\sigma.
\]
\end{lemma}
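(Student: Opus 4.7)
The plan is to produce, for each of the two domains $\Omega_1$ and $\Omega_2$, a pair of equivalent expressions for $V^2$: one that makes the dependence on $\omega$ (resp.\ $\upsilon$) transparent, and one that makes the dependence on $\psi$ transparent. Differentiating the appropriate expression in each variable then reads off the corresponding half of $dV$ with almost no further work.

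Specifically, I claim
\[V^2(\omega,\psi) = \omega^2\wedge\alpha_\psi = \psi\wedge(X_\omega\hook\psi) \quad\text{on }\Omega_1,\]
\[V^2(\psi,\upsilon) = 2\upsilon\wedge\alpha_\psi = \psi\wedge(X_\upsilon\hook\psi) \quad\text{on }\Omega_2.\]
The first equality in each line follows at once from $X_\omega=A(\omega^2)$, $X_\upsilon=2A(\upsilon)$, and the identity $k!\langle\gamma,A\beta\rangle=\beta\wedge\gamma$ from~\eqref{eqn:theA} applied with $\gamma=\alpha_\psi$. The second equality is the purely algebraic content of the chain already carried out in the proof of Proposition~\ref{prop:SL2CAndSU2}, applied with $X_\omega$ (resp.\ $X_\upsilon$) in the role of the vector $X$: starting from $V^2=\langle\alpha_\psi,X\rangle$, the pairing identities give $V^2 = X\wedge(A\psi)^2$; the relation $X\wedge A\psi=A(X\hook\psi)$ from~\eqref{eqn:theA} yields $V^2 = A(X\hook\psi)\wedge A\psi$; and a final application of the pairing identities rewrites this as $\psi\wedge(X\hook\psi)$. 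None of these steps uses the $\SU(2)$-structure condition.

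Differentiation is now mechanical. From $V^2 = \omega^2\wedge\alpha_\psi$, varying $\omega\mapsto\omega+\sigma$ gives $2V\cdot dV_{(\omega,\psi)}(\sigma,0)=2\sigma\wedge\omega\wedge\alpha_\psi$ (since $\alpha_\psi$ does not depend on $\omega$), which on reordering yields $\hat\omega\wedge\sigma$; the analogous computation from $V^2 = 2\upsilon\wedge\alpha_\psi$ gives $dV_{(\psi,\upsilon)}(0,\sigma) = (V^{-1}\alpha_\psi)\wedge\sigma$. For the $\psi$-derivative, apply the Leibniz rule to $V^2 = \psi\wedge(X\hook\psi)$ (with $X=X_\omega$ or $X_\upsilon$, both independent of $\psi$):
\[2V\cdot dV_\psi(\phi) = \phi\wedge(X\hook\psi) + \psi\wedge(X\hook\phi).\]
The key observation is the five-dimensional identity
\[\psi\wedge(X\hook\phi) = (X\hook\psi)\wedge\phi,\qquad \psi,\phi\in\Lambda^3T^*,\ X\in T,\]
which follows from $\psi\wedge\phi\in\Lambda^6T^*=0$ and the graded Leibniz rule $X\hook(\psi\wedge\phi) = (X\hook\psi)\wedge\phi - \psi\wedge(X\hook\phi)$. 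Together with $\phi\wedge(X\hook\psi) = (X\hook\psi)\wedge\phi$ (a $3$-form and a $2$-form commute under wedge), the two terms on the right become equal, summing to $2(X\hook\psi)\wedge\phi$; dividing by $2V$ gives $\hat\psi\wedge\phi$ and $\check\psi\wedge\phi$ respectively.

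The main technical obstacle is establishing the dual formula $V^2=\psi\wedge(X\hook\psi)$ cleanly, which requires chaining several applications of the identities in~\eqref{eqn:theA} while keeping careful track of the $\Lambda^5T^*$-twist inherent in $X_\omega$, $X_\upsilon$, $\alpha_\psi$, and $V^2$ itself. Once both expressions for $V^2$ are in hand, the rest reduces to the Leibniz rule and the five-dimensional identity above.
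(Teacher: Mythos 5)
Your proof is correct and follows essentially the same route as the paper: it rests on the two expressions $V^2(\omega,\psi)=\omega^2\wedge\alpha_\psi=(X_\omega\hook\psi)\wedge\psi$ (which the paper records as \eqref{eqn:VFromOmega} and \eqref{eqn:VFromPsi}, and which, as you rightly note, hold without the $\SU(2)$-structure hypothesis) and the analogous pair for $V^2(\psi,\upsilon)$, followed by differentiation. The only difference is that you spell out the Leibniz-rule step for the $\psi$-derivative, using $\psi\wedge(X\hook\phi)=(X\hook\psi)\wedge\phi$ in dimension five, which the paper leaves implicit.
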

\begin{proof}
Multiplying (\ref{eqn:VFromOmega}) by $V(\omega,\psi)$ and differentiating with respect to $\omega$, we find
\[2V(\omega,\psi)dV_{(\omega,\psi)}(\sigma,0)=2\,\omega\wedge\sigma\wedge\alpha_\psi,\] so
\[dV_{(\omega,\psi)}(\sigma,0)=\omega\wedge\sigma\wedge\left(V^{-1}(\omega,\psi)\alpha_\psi\right).\]
Similarly, \eqref{eqn:VFromPsi} gives $V^2(\omega,\psi)=(X_\omega\hook\psi)\wedge\psi$, and therefore
\[dV_{(\omega,\psi)}(0,\phi)=\left(V^{-1}(\omega,\psi)X_\omega\hook\psi\right)\wedge\phi.\]
Summing the two equations determines the first formula for the differential of $V$; the second is completely analogous.
\end{proof}
 \begin{lemma}
The skew gradient of the functional $H$ defined in \eqref{eqn:functional} is
\begin{equation}
\label{eqn:X_H} (X_H)_{(\omega_1,\psi_2,\psi_3,\upsilon)}=(-d\alpha_3,-d\omega_3,d\omega_2,-\omega_1\wedge d\alpha_2)\;.
\end{equation}
\end{lemma}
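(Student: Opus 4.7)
The plan is to verify \eqref{eqn:X_H} directly from the defining relation $\omega(X_H,Y)=dH(Y)$ for an arbitrary tangent vector $Y=(\dot\omega_1,\dot\psi_2,\dot\psi_3,\dot\upsilon)$ at a point $(\omega_1,\psi_2,\psi_3,\upsilon)\in\mathcal{H}$. The computation splits naturally into two pieces: differentiating the functional $H$ using Lemma~\ref{lemma:dV}, and massaging the symplectic pairing of $Y$ with the proposed $X_H$ via Stokes' theorem so that the two expressions can be compared term by term.

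First, I would apply Lemma~\ref{lemma:dV} to the two summands of \eqref{eqn:functional}. With the forms $\alpha_2,\alpha_3,\omega_2,\omega_3$ given by Lemma~\ref{lemma:apointofH}, the derivative of $V(\omega_1,\psi_2)$ contributes $\hat\omega_1=\alpha_2\wedge\omega_1$ paired with $\dot\omega_1$ and $\hat\psi_2=\omega_2$ paired with $\dot\psi_2$, while the derivative of $V(\psi_3,\upsilon)$ contributes $\check\psi_3=\omega_3$ paired with $\dot\psi_3$ and $\alpha_3$ paired with $\dot\upsilon$. Combining these (with the minus sign in front of the second integral) gives
\[
dH(Y)=\int_M\bigl(\alpha_2\wedge\omega_1\wedge\dot\omega_1+\omega_2\wedge\dot\psi_2-\omega_3\wedge\dot\psi_3-\alpha_3\wedge\dot\upsilon\bigr).
\]

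Second, I would write $Y=(d\beta,d\tau_2,d\tau_3,d\gamma)$ in terms of primitives and integrate each term of $dH(Y)$ by parts. The crucial observation is that at every point of $\mathcal{H}$, the four forms $\omega_1,\psi_2,\psi_3,\upsilon$ are already closed, because $\mathcal{H}$ is an open subset of a translate of $B^2\times B^3\times B^3\times B^4$ by the hypo reference $(\tilde\omega_1,\tilde\psi_2,\tilde\psi_3,\tfrac12\tilde\omega_1^2)$. Thus, for instance, $d(\omega_1\wedge\alpha_2)=\omega_1\wedge d\alpha_2$ since $d\omega_1=0$, and a Stokes argument on the compact $M$ turns $\int\alpha_2\wedge\omega_1\wedge d\beta$ into $\int\omega_1\wedge d\alpha_2\wedge\beta$; comparing with the $-\dot\upsilon\wedge\beta$ slot of the symplectic form identifies the $\upsilon$-component of $X_H$ as $-\omega_1\wedge d\alpha_2$. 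The same procedure with the remaining three terms produces the $2$-form component $-d\alpha_3$, the $\psi_2$-component $-d\omega_3$, and the $\psi_3$-component $d\omega_2$, exactly matching \eqref{eqn:X_H}.

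The main obstacle is bookkeeping signs arising from the degrees of the primitives in the Leibniz rule, and making sure the candidate components actually lie in $B^2,B^3,B^3,B^4$; this last point is immediate since $-d\alpha_3$, $-d\omega_3$, $d\omega_2$ are exact by definition, and $-\omega_1\wedge d\alpha_2=-d(\omega_1\wedge\alpha_2)$ is exact precisely because $d\omega_1=0$ on $\mathcal{H}$. No non-routine analytic input is required beyond Stokes on the compact manifold $M$.
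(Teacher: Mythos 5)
Your proposal is correct and follows essentially the same route as the paper: compute $dH$ via Lemma~\ref{lemma:dV}, integrate by parts on the compact $M$, and read off the components against the symplectic pairing. Your explicit remark that $\omega_1$ is closed on $\mathcal{H}$ (so that $-\omega_1\wedge d\alpha_2=-d(\omega_1\wedge\alpha_2)$ is exact and the Stokes step goes through) is a point the paper leaves implicit, but it is the same argument.
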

\begin{proof}
By Lemma \ref{lemma:dV},
\begin{multline*}dH_{(\omega_1,\psi_2,\psi_3,\upsilon)}(d\beta,d\tau_2,d\tau_3,d\gamma)\\
=\int_M(\alpha_2\wedge\omega_1\wedge d\beta+\omega_2\wedge
d\tau_2-\omega_3\wedge d\tau_3-\alpha_3\wedge d\gamma)\\
=\int_M(d\alpha_2\wedge\omega_1\wedge \beta-d\omega_2\wedge \tau_2+d\omega_3\wedge \tau_3-d\alpha_3\wedge
\gamma)\;\text{ by Stokes' theorem.}
\end{multline*}
The vector field $X_H$ given in \eqref{eqn:X_H} clearly satisfies $\langle X_H,\cdot\rangle=dH$.
\end{proof}

\begin{proof}[Proof of Theorem~\ref{thm:hamiltonian}]
Given a one-parameter family of $\SU(2)$-structures that evolves according to \eqref{eqn:hypoevolution}, we must verify that the corresponding curve in $\mathcal{H}$ is an integral line of the skew gradient  \eqref{eqn:X_H}. It is sufficient to prove it at a point.

Let $(\omega_1,\psi_2,\psi_3,\upsilon)$ be a point of $\mathcal{H}$ that defines an $\SU(2)$-structure. By Lemma~\ref{lemma:dV}, the curve is an integral line of the Hamiltonian flow if
\begin{align*}
 \frac{\partial}{\partial t}\omega_1&=-d\alpha, &  \frac{\partial}{\partial t}(\omega_2\wedge\alpha)&=-d\omega_3,\\
 \frac{\partial}{\partial t}\upsilon&=-\omega_1\wedge d\alpha,&\frac{\partial}{\partial t}(\omega_3\wedge\alpha)&=d\omega_2.
\end{align*}
These equations are equivalent to the evolution equations \eqref{eqn:hypoevolution} together with  $\upsilon=\frac12\omega_1^2$.
\end{proof}

\begin{remark}
The conditions of Proposition~\ref{prop:SL2CAndSU2} can be used to characterize the points of $\mathcal{H}$ that define an $\SU(2)$-structure. These points constitute what may be considered the space of deformations of the starting hypo structure. One can show directly that  the vector field $X_H$ is tangent to this space of deformations (see \cite{thesis}). In light of Theorem~\ref{thm:hamiltonian}, this can also be viewed as a consequence of the existence of solutions of  \eqref{eqn:hypoevolution}.
\end{remark}

\section{Evolution by gauge transformations}
\label{sec:gauge}
By definition, a solution of the hypo evolution equations is a one-parameter family of hypo structures satisfying \eqref{eqn:hypoevolution}. On the other hand, a one-parameter family $(\alpha(t),\omega_i(t))$ could satisfy \eqref{eqn:hypoevolution} without defining an $\SU(2)$-structure for all $t$. The condition of defining an $\SU(2)$-structure is only preserved infinitesimally by the evolution flow; by the non-uniqueness of the solutions of ODE's in a Fr\'echet space, this means that the condition of defining an $\SU(2)$-structure for all $t$ is not automatic. On the other hand, closedness of $\omega_1$, $\psi_2$, and $\psi_3$ is preserved in time by the evolution equations.

In this section we give an alternative description of the hypo evolution flow, which has the ``dual`` property that the condition of defining an $\SU(2)$-structure is automatically preserved in time, but the closedness of $\omega_1$, $\psi_2$, and $\psi_3$ is only preserved infinitesimally. This description will play a key r\^ole in the explicit calculations of Section~\ref{sec:integrating}.

The idea, borrowed from \cite{Stock}, is to restate the evolution equations in terms of gauge transformations. A gauge transformation on a $5$-manifold $M$ is by definition a $\GL(5,\R)$-equivariant map $s\colon F\to\GL(5,\R)$, where $F$ is the bundle of frames and $\GL(5,\R)$ acts on itself by the adjoint action. A gauge transformation $s$ defines a $\GL(5,\R)$-equivariant map from $F$ to itself by
\[u\to u s(u).\]
Accordingly, a gauge transformation acts on every associated bundle \mbox{$F\times_{\GL(5,\R)}V$} by
\[s\cdot [u,v]=[us(u),v].\]
Given an $\SU(2)$-structure $P\subset F$, one can define its intrinsic torsion as a map $P\to\su(2)^\perp\otimes\R^5$; the hypo condition implies that the intrinsic torsion takes values in a submodule isomorphic to $\Sym(\R^5)$. It is not surprising that the intrinsic torsion as a map
\[P\to \Sym(\R^5)\subset\gl(5,\R)\]
defines an ``infinitesimal gauge transformation'' that determines the evolution flow. The aim of this section is to define explicitly the infinitesimal gauge transformation, in the guise of an equivariant map
\[Q_P\colon F\to\gl(5,\R),\]
and prove the following:
\begin{theorem}
\label{thm:gauge}
Let $F$ be the bundle of frames on a $5$-manifold, and let $P_t\subset F$ be a one-parameter family of hypo $\SU(2)$-structures. Then $P_t$ satisfies the hypo evolution equations \eqref{eqn:hypoevolution} if and only if $P_t$ is obtained from a one-parameter family of gauge transformation
\[s_t\colon F\to\GL(5,\R), \quad s_0\equiv\id,\]
by
\[P_t=\{u s_t(u)\mid u\in P_0\},\]
and
\[s'_t s_t^{-1} =-Q_{P_t},\]
where juxtaposition represents matrix multiplication.
\end{theorem}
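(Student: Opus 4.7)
My plan is to reduce the theorem to a pointwise identity saying that $-Q_{P_t}$, viewed as an infinitesimal gauge transformation, reproduces the right hand sides of the hypo evolution equations. I would proceed in three steps, the first of which contains most of the algebraic content.

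First, I would make the definition of $Q_P$ explicit. The paper recalls that the intrinsic torsion of a hypo $\SU(2)$-structure takes values in a submodule of $\su(2)^\perp\otimes\R^5$ isomorphic to $\Sym(\R^5)$, so there is a canonical symmetric endomorphism of $\R^5$ attached to each adapted frame $u\in P$. I would define $Q_P(u)\in\Sym(\R^5)$ as the unique symmetric matrix whose natural actions on the standard forms $e^{12}+e^{34}$, $e^{135}+e^{425}$, $e^{145}+e^{235}$ reproduce the $u$-pullbacks of $d\alpha$, $d\omega_3$, $-d\omega_2$ respectively, then extend to a $\GL(5,\R)$-equivariant map $F\to\gl(5,\R)$ via conjugation. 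The key algebraic fact to verify is that the $\SU(2)$-equivariant map
\[
\Sym(\R^5)\longrightarrow\Lambda^2(\R^5)^*\oplus\Lambda^3(\R^5)^*\oplus\Lambda^3(\R^5)^*
\]
so defined is an injection onto the intrinsic-torsion component of exterior differentiation of $\omega_1,\psi_2,\psi_3$ expressed through $\alpha,\omega_2,\omega_3$.

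Second, I would compute the infinitesimal effect of a gauge family $s_t$ on the associated forms. A chain-rule computation on the frame bundle --- using that $s_t$ moves $u\in P_0$ to $us_t(u)\in P_t$, so that each $t$-dependent form $\omega^{(t)}$ expressed in the frame $u$ is obtained from the standard model by the induced action of $s_t(u)$ on $\Lambda^\bullet(\R^5)^*$ --- yields, at any time and any point $x\in M$ with adapted frame $u\in P_t$,
\[
\tfrac{\partial}{\partial t}\omega^{(t)}\big|_x=\rho\bigl(s'_ts_t^{-1}(u)\bigr)\cdot\omega_{\mathrm{std}},
\]
where $\rho$ is the induced representation of $\gl(5,\R)$ on $\Lambda^\bullet(\R^5)^*$ and the result is transported to $T^*_xM$ via $u$. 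Combining this with Step~1, the equation $s'_ts_t^{-1}=-Q_{P_t}$ is pointwise equivalent to $\partial_t\omega_1=-d\alpha$, $\partial_t\psi_2=-d\omega_3$, $\partial_t\psi_3=d\omega_2$, i.e.\ to \eqref{eqn:hypoevolution}.

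Third, I would package both directions. The ``$\Leftarrow$'' direction is immediate from Steps~1--2. For ``$\Rightarrow$'', given a hypo family $P_t$ I would solve the linear ODE $s'_t=-Q_{P_t}s_t$ on $F$ with $s_0=\id$ to produce a unique one-parameter family of gauge transformations, and then check $P_0\cdot s_t=P_t$: both sides are smooth one-parameter families of $\SU(2)$-structures on $M$ starting at $P_0$, whose defining triples $(\omega_1,\psi_2,\psi_3)$ satisfy the same first-order evolution, and by Proposition~\ref{prop:SL2CAndSU2} these triples determine the reductions. The main obstacle is the representation-theoretic matching in Step~1: one has to verify both the injectivity of $\Sym(\R^5)\to\Lambda^2\oplus\Lambda^3\oplus\Lambda^3$ and that its image contains $(d\alpha,d\omega_3,-d\omega_2)$ for every hypo structure, using $\psi_k=\omega_k\wedge\alpha$ and the hypo relations $d\omega_1=0=d\psi_2=d\psi_3$. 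A related subtlety is the $\SU(2)$-valued freedom in lifting $P_t$ to a family of gauge transformations: it is fixed by requiring $s'_ts_t^{-1}$ to lie in $\Sym(\R^5)\subset\gl(5,\R)$, a complement to $\su(2)\subset\so(5)$, which picks out the canonical lift.
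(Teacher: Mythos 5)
Your proposal follows essentially the same route as the paper: define $Q_P$ as the symmetric matrix encoding the hypo part of the intrinsic torsion, show by a chain-rule/equivariance computation on the frame bundle that the gauge flow $s'_ts_t^{-1}=-Q_{P_t}$ reproduces the right-hand sides of \eqref{eqn:hypoevolution}, and close the equivalence with a pointwise linear ODE argument (your uniqueness step is unproblematic because the comparison ODE is linear with coefficients driven by the \emph{given} family $P_t$, so the Fr\'echet-space non-uniqueness the paper warns about does not intervene). The only caveat is that what you defer as ``the main obstacle'' --- producing a symmetric $Q$ whose induced action sends $(\omega_1,\psi_2,\psi_3)$ to $(d\alpha,\,d\omega_3,\,-d\omega_2)$ --- is exactly where the paper's real work lies: it exhibits $Q_P$ explicitly in terms of $(\beta,f,g,\omega^-,\sigma_2^-,\sigma_3^-)$, checks $\SU(2)$-equivariance on explicit bases, and, because it prescribes the action on the quadruple $(\alpha,\omega_1,\omega_2,\omega_3)$ rather than on the triple, needs the derivative computations of Lemma~\ref{lemma:dinnerparty} to reconcile $\partial_t(\omega_k\wedge\alpha)$ with $\pm d\omega_{k'}$; your injectivity claim is immediate since $\su(2)\cap\Sym(\R^5)=0$, but the existence (surjectivity onto the hypo torsion) half still requires that explicit verification, so your outline is a correct skeleton of the paper's argument rather than an alternative to it.
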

The minus sign appearing in the statement is a consequence of an arbitrary choice in the  definition of $Q_P$, motivated by the fact that in later sections we shall work with coframes rather than frames.

As a first step, we use the language of Section~\ref{sec:hamiltonian} to express the time derivative of the defining forms in terms of the intrinsic torsion. Recall from \cite{ContiSalamon} that an hypo structure $(\alpha,\omega_i)$ satisfies the following  ``structure equations'':
\begin{equation}
 \label{eqn:hypoIT}
 \left\{
\begin{aligned}
d\alpha&=\alpha\wedge\beta+f\omega_1+\omega^-,\\
d\omega_2&=\beta\wedge\omega_2+g\alpha\wedge\omega_3+\alpha\wedge\sigma_2^-,\\
d\omega_3&=\beta\wedge\omega_3-g\alpha\wedge\omega_2+\alpha\wedge\sigma_3^-.
\end{aligned}
\right.
\end{equation}
Here, $\beta$ is a $1$-form, $f$, $g$ are functions and $\omega^-$, $\sigma_2^-$, $\sigma_3^-$ are $2$-forms in $\Lambda^2_-(\ker\alpha)$. These functions and forms define the intrinsic torsion of the hypo $\SU(2)$-structure. These components can be defined for  generic $\SU(2)$-structures $(\alpha,\omega_i)$ as follows:
\begin{align*}
d\alpha&=\alpha\wedge\beta+ f\omega_1+f_2\omega_2+f_3\omega_3+\omega^-,\\
d\omega_1&=\gamma_1\wedge\omega_1+ \alpha\wedge(\lambda \omega_1 - g_2\omega_3 + g_3\omega_2+\sigma_1^-),\\
d\omega_2&=\gamma_2\wedge\omega_2+ \alpha\wedge(\lambda \omega_2 - g_3\omega_1 + g\omega_3+\sigma_2^-),\\
d\omega_3&=\gamma_3\wedge\omega_3+ \alpha\wedge(\lambda \omega_3 - g\omega_2 + g_2\omega_1+\sigma_3^-).
\end{align*}
An $\SU(2)$-structure also defines almost-complex structures $J_1,J_2,J_3$ on the distribution $\ker\alpha$, given explicitly by
\[\gamma\wedge\omega_j=(J_i\gamma) \wedge\omega_k, \quad Y\hook\omega_j= (J_iY)\hook\omega_k,\]
where $\gamma$ is a $1$-form orthogonal to $\alpha$, $Y$ is a vector in $\ker\alpha$, and $\{i,j,k\}$ is an even permutation of $\{1,2,3\}$.

We can now prove:
\begin{lemma}
\label{lemma:dinnerparty}
Let $(\omega_1(t),\psi_2(t),\psi_3(t),\upsilon(t))$ be an integral curve of the skew gradient $X_H$, and suppose it defines a hypo structure $(\alpha,\omega_i)$ at time zero. If  $\alpha_2(t)$, $\alpha_3(t)$, $\omega_2(t)$, $\omega_3(t)$ are defined as in Lemma~\ref{lemma:apointofH}, their derivatives depend on $(\alpha,\omega_i)$ and its intrinsic torsion as follows:
\begin{gather*}
\frac{d}{dt}\alpha_2|_{t=0}=\frac{d}{dt}\alpha_3|_{t=0}=(f+g)\alpha+J_1\beta,\\
\frac{d}{dt}\omega_2|_{t=0}=-f\omega_2+J_3\beta\wedge\alpha -\sigma_3^-,\quad
\frac{d}{dt}\omega_3|_{t=0}=-f\omega_3-J_2\beta\wedge\alpha +\sigma_2^-.
\end{gather*}
\end{lemma}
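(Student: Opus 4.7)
At $t=0$, Lemma~\ref{lemma:apointofH} identifies the auxiliary forms with those of the $\SU(2)$-structure: $\alpha_2=\alpha_3=\alpha$ and the auxiliary $\omega_j$ agree with the structure forms for $j=2,3$. Substituting into \eqref{eqn:X_H}, the integral-curve condition simplifies at $t=0$ to $\dot\omega_1=-d\alpha$, $\dot\psi_2=-d\omega_3$, $\dot\psi_3=d\omega_2$, $\dot\upsilon=-\omega_1\wedge d\alpha$. My plan is to feed these variations into a chain-rule computation of $\dot\alpha_2$ and $\dot\omega_2$ (and symmetrically for the subscript~$3$ forms), then eliminate $d\alpha$, $d\omega_2$, $d\omega_3$ via the hypo structure equations \eqref{eqn:hypoIT} and simplify using the $\SU(2)$-module identities $\omega_j\wedge\omega_k=\delta_{jk}\omega_1^2$ and $\omega_j\wedge\sigma^-=0$ for any anti-self-dual $\sigma^-\in\Lambda^2_-(\ker\alpha)^*$.

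To avoid differentiating $V^{-1}$ directly, I first multiply through to obtain $\alpha_2\,V=\alpha_{\psi_2}$ and $\omega_2\,V=X_{\omega_1}\hook\psi_2$, so that only the maps $V$, $\alpha_\psi$ and $X_\omega$ need to be differentiated. The scalar ratio $\dot V/V$ is read off from Lemma~\ref{lemma:dV}, using $\hat\omega_1=\alpha\wedge\omega_1$ and $\hat\psi_2=\omega_2$ at the base point; the structure equations reduce this to $\dot V/V=g-f$. For the $\alpha$-component of $\dot\alpha_2$, contracting $\alpha_2\,V=\alpha_{\psi_2}$ with $X_{\omega_1}$ and differentiating gives $V\dot\alpha_2(X)=\dot V-\alpha(\dot X_{\omega_1})$; identity~\eqref{eqn:theA} combined with the formula $X_\omega=A(\omega^2)$ yields $\alpha(\dot X_{\omega_1})=2\,\alpha\wedge\omega_1\wedge\dot\omega_1=-2fV$, so $\dot\alpha_2(X)=(g-f)+2f=f+g$, as required.

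The horizontal components are where the almost-complex structures enter. Using $\beta(X)=0$ together with $X\hook\omega_j=X\hook\sigma_j^-=0$, one computes $X\hook\dot\psi_2=g\omega_2-\sigma_3^-$, while the chain rule gives $\dot X_{\omega_1}=-X_{\omega_1\wedge\alpha\wedge\beta}-2f\,X_{\omega_1}$. The decisive algebraic step is to identify $V^{-1}X_{\omega_1\wedge\alpha\wedge\beta}\in\ker\alpha$ as the unique vector $Z_\perp$ whose contraction with $\omega_2$ equals $-J_3\beta$; this follows from a direct evaluation of $A$ on $\omega_1\wedge\alpha\wedge\beta$ via \eqref{eqn:theA}, combined with the compatibility $Y\hook\omega_j=(J_iY)\hook\omega_k$. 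Assembling $\dot\omega_2=-(g-f)\omega_2+V^{-1}\dot X_{\omega_1}\hook\psi_2+X\hook\dot\psi_2$ and simplifying then produces $\dot\omega_2=-f\omega_2+J_3\beta\wedge\alpha-\sigma_3^-$. The horizontal part of $\dot\alpha_2$ is obtained by the same strategy applied to $V^{-1}\dot\alpha_{\psi_2}$, yielding $J_1\beta$.

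The formulas for $\dot\alpha_3$ and $\dot\omega_3$ follow from the symmetric computation starting from $\alpha_3\,V=\alpha_{\psi_3}$ and $\omega_3\,V=X_\upsilon\hook\psi_3$, with the second formula of Lemma~\ref{lemma:dV} in place of the first and with $\dot\psi_3=d\omega_2$, $\dot\upsilon=-\omega_1\wedge d\alpha$; the equality $\dot\alpha_2=\dot\alpha_3$ at $t=0$ is then a consistency check on the Hamiltonian structure. The main obstacle is bookkeeping: the orientation conventions for $A$ and $A^*$, the explicit action of each $J_i$ on an adapted basis, and the sign contributions from each term in \eqref{eqn:hypoIT} all need to be tracked consistently in order to land on the precise intrinsic-torsion combinations stated.
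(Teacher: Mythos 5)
Your proposal is correct and follows essentially the same route as the paper's proof: both differentiate $V$, $\alpha_\psi$ and $X_\omega$ along the flow (via Lemma~\ref{lemma:dV} and the identities \eqref{eqn:theA}), substitute the hypo structure equations \eqref{eqn:hypoIT}, and translate the resulting contractions into the $J_i$ using the quaternionic compatibility relations; I checked the key identities ($\dot V/V=g-f$, $\alpha(\dot X_{\omega_1})=-2fV$, $X\hook\dot\psi_2=g\omega_2-\sigma_3^-$, and $Z_\perp\hook\omega_2=-J_3\beta$) and they all hold. The only differences are organizational: you clear denominators rather than applying the quotient rule, and the paper additionally shortcuts $V(\omega_1,\psi_2)'=V(\psi_3,\upsilon)'$ by invoking constancy of $H$ along integral lines, which your symmetric computation reproduces directly.
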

\begin{proof}
We work at $t=0$; hence, we can fix the metric underlying the hypo structure $(\alpha,\omega_i)$, whose volume form is $\frac12\alpha\wedge\omega_1^2$. Then under the identification $TM=T^*M$ the operators $A$, $A^*$ have the form
\[A\gamma= *\gamma \otimes \frac12\alpha\wedge\omega_1^2, \quad A^*\gamma=*\gamma \otimes (\frac12\alpha\wedge\omega_1^2)^{-1}.\]
Note that $V(\omega_1,\psi_2)$ is twice our fixed volume form.

By \eqref{eqn:X_H}, the time derivative of $\alpha_{\psi_3}$ is
\[\alpha_{\psi_3}'=2A^*(A\psi_3\wedge A\psi_3')=2A^*(A\psi_3\wedge A d\omega_2).\]
On the other hand
\[*d\omega_2=J_2\beta\wedge\alpha + g\omega_3 - \sigma_2^-, \quad *\psi_3=\omega_3,\]
hence
\[\alpha_{\psi_3}'=*2(\omega_3\wedge  J_2\beta\wedge\alpha + g\omega_3^2)\otimes \left(\frac12\alpha\wedge\omega_1^2\right)=(J_1\beta +2g\alpha)\otimes (\alpha\wedge\omega_1^2). \]
It follows from Lemma~\ref{lemma:dV} and \eqref{eqn:X_H} that
\[V(\psi_3,\upsilon)'=\omega_3\wedge\psi_3'+\alpha_3\wedge\upsilon'=\omega_3\wedge d\omega_2 -\alpha_3\wedge\omega_1\wedge d\alpha_2=(-f+g)V(\psi_3,\upsilon).\]
By definition \[\alpha_3(t)=V(\psi_3,\upsilon)^{-1}\alpha_{\psi_3},\] so
\[\alpha_3'=-\frac1{V^2(\psi_3,\upsilon)}V(\psi_3,\upsilon)'\alpha_{\psi_3} + V^{-1}(\psi_3,\upsilon)\alpha_{\psi_3}'=(f-g)\alpha_3+J_1\beta +2g\alpha_3. \]
Similarly, we obtain
\[\alpha_{\psi_2}'=-*2(\omega_2\wedge  J_3\beta\wedge\alpha - g\omega_2^2)=2J_1\beta +4g\alpha. \]
Notice also that
\[V(\omega_1,\psi_2)'=V(\psi_3,\upsilon)',\]
because the Hamiltonian $H$ is constant along integral lines. Thus $\alpha_2'=\alpha_3'$.

\smallskip
Now set \[X(t)=V^{-1}(\omega_1,\psi_2)X_{\omega_1};\]
we claim that
\begin{equation}
 \label{eqn:ddtX}
X'=(-f-g)X-(J_1\beta)^\sharp.
\end{equation}
Indeed, working again at $t=0$,
\begin{multline*}
X_{\omega_1}'=2A(\omega_1\wedge\omega_1')=-2A(\omega_1\wedge d\alpha_3)=-2A(\omega_1\wedge\alpha\wedge\beta+f\omega_1^2)\\
=(-J_1\beta-2f\alpha)^\sharp \otimes (\alpha\wedge\omega_1^2),
 \end{multline*}
whence
\[X'=-\frac1{V(\omega_1,\psi_2)^2}V(\psi_3,\upsilon)'X_{\omega_1} + V(\omega_1,\psi_2)^{-1}X_{\omega_1}'=(-f-g)X-(J_1\beta)^\sharp,\]
proving \eqref{eqn:ddtX}.

We can now compute $\omega_2'$ by
\[(X\hook\psi_2)'=(-f-g)\omega_2-J_1\beta\hook\psi_2 + g\omega_2-\sigma_3^-=-f\omega_2+J_3\beta\wedge\alpha -\sigma_3^-.\]
By definition, to compute $\omega_3$ we should take the interior product with $V^{-1}(\psi_3,\upsilon)X_\upsilon$ rather than $X(t)$; however, it is easy to verify that the vector fields coincide up to first order at time $0$. Thus
\[(X\hook\psi_3)'=(-f-g)\omega_3-J_1\beta\hook\psi_3 + g\omega_3+\sigma_2^-=-f\omega_3-J_2\beta\wedge\alpha +\sigma_2^-.\qedhere\]
\end{proof}
The second step is to define $Q_P$. Let $P$ be an $\SU(2)$-structure and $\pi\colon P\to M$ the projection. The intrinsic torsion defines global differential forms on $M$, which can be pulled back to basic forms on $P$; such forms belong to the algebra generated by the components $\theta^1,\dotsc, \theta^5$ of the tautological form. Thus, we obtain functions on $P$ determined by
\[\pi^*\omega^- = 2\omega^-_a (\theta^{12}-\theta^{34})+2\omega^-_b (\theta^{13}-\theta^{42})+2\omega^-_c (\theta^{14}-\theta^{23});\]
in a similar way, we define functions
\[(\sigma^-_k)_a,(\sigma^-_k)_b,(\sigma^-_k)_c\colon P\to\R, \quad k=2,3.\]
Likewise a one-form orthogonal to $\alpha$, such as $\beta$, determines a function $P\to\R^4$ by
\[\pi^*\beta=\beta_1\theta^1+\dotsc+\beta_4\theta^4.\]
We can now define $Q_P$ in terms of the ``hypo'' part of the intrinsic torsion, i.e. the components appearing in \eqref{eqn:hypoIT}.
\begin{proposition}
Given an $\SU(2)$-structure $P$ on $M$, at each $u\in P$ let $\tilde Q_P(u)$ be the symmetric matrix whose upper triangular part is
\[
 \begin{pmatrix}
 (\sigma_2^-)_c -(\sigma_3^-)_b - \omega^-_a  & (\sigma_2^-)_b + (\sigma_3^-)_c  & -(\sigma_2^-)_a - \omega^-_c & -(\sigma_3^-)_a+ \omega^-_b \\
   &-(\sigma_2^-)_c +(\sigma_3^-)_b - \omega^-_a & -(\sigma_3^-)_a - \omega^-_b & (\sigma_2^-)_a - \omega^-_c\\
   & & -(\sigma_2^-)_c-(\sigma_3^-)_b + \omega^-_a & (\sigma_2^-)_b - (\sigma_3^-)_c\\
   & & & (\sigma_2^-)_c+(\sigma_3^-)_b + \omega^-_a
 \end{pmatrix}.
\]
Define $Q_P\colon P\to\gl(5,\R)$ by
\[
 Q_P(u)=\begin{pmatrix}
-\frac f2\id +\tilde Q_P     & J_1\beta \\
(J_1\beta)^T & f+g\\
 \end{pmatrix}.
\]
Then $Q_P$ is $\SU(2)$-equivariant, and therefore defines a section of
\[P\times_{\SU(2)}\gl(5,\R)\cong\End(TM).\]
 \end{proposition}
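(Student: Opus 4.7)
The goal is to show $Q_P(uh) = \Ad(h^{-1}) Q_P(u)$ for every $h \in \SU(2)$; the identification with a section of $\End(TM)$ is then automatic, since $P \subset F$ and $\gl(5,\R) \cong \R^5 \otimes (\R^5)^*$, so any $\SU(2)$-equivariant map from $P$ to $\gl(5,\R)$ descends to a section of $P \times_{\SU(2)} \gl(5,\R) \cong \End(TM)$. Since $\SU(2)$ is connected, equivariance can be checked infinitesimally: for each $\xi \in \su(2)$, match the derivative of $Q_P(uh)$ at $h = e$ with the commutator $[Q_P(u), \xi]$ entry by entry.

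The first step is to pin down how the inputs $(f, g, \beta, \omega^-, \sigma_2^-, \sigma_3^-)$ transform under $\SU(2)$. These are components of the intrinsic torsion, a section of $P \times_{\SU(2)} (\su(2)^\perp \otimes (\R^5)^*)$; in the hypo case it lies in the submodule $\Sym(\R^5) \subset \su(2)^\perp \otimes (\R^5)^*$, and the projection onto each summand appearing in \eqref{eqn:hypoIT} is intrinsically defined, hence $\SU(2)$-equivariant. Concretely: $f$ and $g$ are pullbacks of invariant functions on $M$; the four coefficients of $\beta$ transform by the standard representation of $\SU(2)$ on $(\R^4)^* \cong \C^2$; and each of the triples $(\omega^-_a, \omega^-_b, \omega^-_c)$ and $((\sigma_k^-)_a, (\sigma_k^-)_b, (\sigma_k^-)_c)$ transforms by the three-dimensional real representation obtained by factoring the $\SU(2)$-action through the surjection $\SU(2) \to \SO(3)$ acting on $\Lambda^2_-(\R^4)^*$.

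The second step is to decompose the target. Writing $\R^5 = \C^2 \oplus \R$, the module $\Sym(\R^5) \subset \gl(5, \R)$ splits under $\SU(2)$ into: the $(5,5)$-entry (a trivial rep); the off-diagonal column (a standard $\R^4 \cong \C^2$); the trace of the $4 \times 4$ block (a second trivial rep); and the nine-dimensional traceless symmetric $4 \times 4$ block. A character count shows this last summand consists of three copies of the three-dimensional real $\SO(3)$-representation. Total dimensions match on both sides ($1 + 1 + 4 + 9 = 15$), with $(f, g)$ parametrising the two trivial factors and $(\omega^-, \sigma_2^-, \sigma_3^-)$ the three axial triples.

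The third and final step is to check that the specific formula is indeed the equivariant isomorphism between the matched summands. The scalar part is immediate: the diagonal of $\tilde Q_P$ is traceless and the $(5,5)$-entry equals $f+g$, so $(f,g)$ is recovered with the correct normalisations. The vector part is also clear: $J_1$ commutes with $\SU(2)$ and hence $J_1\beta$ transforms like $\beta$. The only genuinely nontrivial check concerns the triples $\omega^-, \sigma_2^-, \sigma_3^-$ embedded as linear combinations in $\tilde Q_P$. Picking three generators $\xi_1, \xi_2, \xi_3$ of $\su(2) \subset \so(4)$, one must verify entry by entry that $[\xi_i, \tilde Q_P]$ coincides with the substitution into $\tilde Q_P$ of the infinitesimal rotation of $(\omega^-, \sigma_2^-, \sigma_3^-)$ by $\xi_i$. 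The hard part will be the bookkeeping here: all three axial reps live inside the same nine-dimensional summand of $\Sym^2_{\mathrm{tr}=0}(\R^4)$, so the specific pattern of signs in $\tilde Q_P$ must be shown to produce three non-mixing copies under the $\SO(3)$-action, with $\omega^-$, $\sigma_2^-$, $\sigma_3^-$ isotypic summands not leaking into each other. Once this is verified, $\SU(2)$-equivariance of $Q_P$ follows, and $Q_P$ defines the claimed section of $P \times_{\SU(2)} \gl(5,\R) \cong \End(TM)$.
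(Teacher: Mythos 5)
Your proposal is correct and follows essentially the same route as the paper: both reduce the claim to the equivariance of the linear map from the intrinsic torsion module into $\gl(5,\R)$, identify the trivial and standard-$\R^4$ summands as immediate, and locate the real content in checking that the three $\Lambda^2_-$-type triples sit equivariantly inside the nine-dimensional traceless symmetric $4\times4$ block. The only cosmetic difference is that you phrase the final verification infinitesimally via $[\xi_i,\tilde Q_P]$, whereas the paper exhibits the three explicit bases of symmetric matrices and checks the group action on them directly.
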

\begin{proof}
From the general theory, the intrinsic torsion map $P\to \R^5\otimes\su(2)^\perp$ is $\SU(2)$-equivariant. The definition of $Q$ amounts to composing the intrinsic torsion with a map $\R^5\otimes\su(2)^\perp\to \gl(5,R)$, which must be checked to be equivariant.

The non-trivial submodules of $\R^5\otimes\su(2)^\perp$ are isomorphic to the space $\Lambda^2_-\R^4$ spanned by
\begin{equation}
 \label{eqn:lambda2minus}
e^{12}-e^{34},e^{13}-e^{42},e^{14}-e^{23}.
\end{equation}
Denote by $e^k\odot e^h$ the element $e^h\otimes e^k+e^k\otimes e^h$ of $\gl(5,\R)\cong \R^5\otimes\R^5$. It is easy to check that $\SU(2)$ acts on the bases
\begin{gather*}
-e^1\otimes e^1-e^2\otimes e^2+e^3\otimes e^3+e^4\otimes e^4, \quad e^1\odot e^4-e^2\odot e^3, \quad -e^1\odot e^3-e^2\odot e^4,\\
e^2\odot e^4-e^1\odot e^3, \quad e^1\odot e^2+e^3\odot e^4,\quad e^1\otimes e^1-e^2\otimes e^2-e^3\otimes e^3+e^4\otimes e^4,\\
e^2\odot e^3+e^1\odot e^4,\quad e^1\otimes e^1-e^2\otimes e^2+e^3\otimes e^3-e^4\otimes e^4,  \quad -e^1\odot e^2+e^3\odot e^4,
\end{gather*}
as it acts on the basis \eqref{eqn:lambda2minus}.

Similarly, $e^5\otimes e^5$ is invariant under $\SU(2)$, and the map
\[\Span{e^1,\dotsc e^4}\to \gl(5,\R), \quad e^i\to e^i\odot e^5\]
is $\SU(2)$-equivariant.
\end{proof}
The map  $Q_P$ will be extended equivariantly to a map $Q_P\colon F\to\gl(5,\R)$.

\begin{proof}[Proof of Theorem~\ref{thm:gauge}]

We work at a point $x\in M$. Let $u\in P_x$. The map $Q_{P_0}$ was constructed in such a way that
\begin{align*}
[u,Q_{P_0}(u)e^5]&=-(f+g)\alpha-J_1\beta,\\
[u,Q_{P_0}(u)(e^{12}+e^{34})]&=\alpha\wedge\beta+f\omega_1+\omega^-,\\
[u,Q_{P_0}(u)(e^{13}+e^{42})]&=f\omega_2-J_3\beta\wedge\alpha +\sigma_3^-, \\
[u,Q_{P_0}(u)(e^{14}+e^{23})]&=f\omega_3+J_2\beta\wedge\alpha -\sigma_2^-.
\end{align*}
At generic $t$, these equations take the form
\[-(f+g)\alpha-J_1\beta= [u s_t(u), Q_{P_t}(u s_t(u)) e^5] =  [u, Q_{P_t}(u) (s_t(u) e^5)],\]
and so on.

Now suppose that $P_t$ is defined by $s_t$ as in the statement; we must show that the hypo evolution equations are satisfied. By construction
\[\alpha(t)=s_t\cdot \alpha(0), \quad \omega_i(t)=s_t\cdot \omega_i(0).\]
Thus
\[\alpha'(t)_x = [u,s_t(u)'e^5)]=[u,-Q_{P_t}(u)s_t(u)e^5];\]
this holds for all $u$, hence
\[\alpha'= (f+g)\alpha+J_1\beta.\]
Similarly, we find
\begin{gather*}
\frac{d}{dt}\omega_1=-\alpha\wedge\beta-f\omega_1-\omega^-,\quad
\frac{d}{dt}\omega_2=-f\omega_2+J_3\beta\wedge\alpha -\sigma_3^-,\\
\frac{d}{dt}\omega_3=-f\omega_3-J_2\beta\wedge\alpha +\sigma_2^-.
\end{gather*}
These equations are consistent with Lemma \ref{lemma:dinnerparty}, and they imply the hypo evolution equations.
\end{proof}

\begin{remark}
By construction the metric $g_t$ varies accordingly to
\[g_t'(X,Y)=g_t(Q_{P_t}X,Y)+g_t(X,Q_{P_t}Y)=2g_t(Q_{P_t}X,Y);\]
on the other hand it follows easily from the Koszul formula that with respect to the generalized cylinder metric $g_t+dt^2$,
\[g_t'(X,Y)=-2g_t(W(X),Y).\]
where $W$ is the Weingarten tensor of the hypersurface $N\times\{t\}$, with orientations chosen so that $\frac{\partial}{\partial t}$ is the positively oriented normal. Thus, $Q_{P_t}$ equals minus this Weingarten tensor.
\end{remark}

\section{A model for evolving Lie algebras}
\label{sec:D}
Given a hypo Lie algebra $\lie{g}$, one way of expressing a solution of the evolution equations is by giving a one-parameter family of coframes on $\lie{g}$. The natural setting to study this problem is therefore the category of Lie algebras with a fixed coframe. In this section we introduce this category, and also a model for it, in the guise of a discrete category over a real affine variety. Computations are considerably easier on this model category $\mathcal{D}$, since the group of automorphisms is effectively factored out; for this reason, the classification results of this paper will be formulated in terms of $\mathcal{D}$. We shall fix the dimension to five for definiteness, and in view of the applications to follow.

Let $(D)$ be the category whose objects are pairs $(\lie{g},u)$, with $\lie{g}$ a $5$-dimensional Lie algebra and $u\colon\R^5\to\lie{g}^*$ a linear isomorphism; $u$ induces naturally another isomorphism
\[u\colon\Lambda^2\R^5\to\Lambda^2\lie{g}^*, \quad u(\alpha\wedge\beta)=u(\alpha)\wedge u(\beta).\]
We define
\[\Hom_{(D)}((\lie{g},u),(\lie{h},v))=\{f\in\Hom(\lie{g},\lie{h})\mid u=f^t\circ v\},\]
where $\Hom(\lie{g},\lie{h})$ is the space of Lie algebra homomorphisms (though $f$ is forced to be an isomorphism).
There is a natural right action of $\GL(5,\R)$ on $(D)$. This means that each $g\in\GL(5,R)$ defines a functor
\[J_g(\lie{g},u) = (\lie{g},u\circ g), \quad J_g(f)=f,\]
and $J_g\circ J_h = J_{hg}$.

A model for $(D)$ is given by the set
\[\mathcal{D}=\{d\in\Hom(\R^5,\Lambda^2\R^5)\mid \hat d\circ d=0\},\]
where $\hat d$ is the derivation $\Lambda\R^5\to\Lambda\R^5$ induced by $d$.
There is a natural right action of $\GL(5,\R)$ on $\mathcal{D}$, namely
\[(\mu(g)d)(\beta)=g^{-1}d(g\beta), \quad g\in\GL(5,\R),d\in\mathcal{D}.\]
We shall also denote by $\mathcal{D}$ the discrete category on $\mathcal{D}$; this means that the objects are the points of $\mathcal{D}$, and the only morphisms are the  identity morphisms. There is a $\GL(5,\R)$-equivariant functor
\[F\colon(D)\to\mathcal{D}, \quad F(\lie{g},u)=u^{-1}\circ d_{\lie{g}}\circ u, \quad F(f)=\id,\]
where $d_{\lie{g}}\colon\lie{g}^*\to\Lambda^2\lie{g}^*$ is the Chevalley-Eilenberg differential.

We claim that $F$ is an equivalence, meaning that $F$ maps isomorphically morphisms onto morphisms, and every object of $\mathcal{D}$ is isomorphic to some $F(\lie{g},u)$.
\begin{lemma}
The functor $F\colon(D)\to\mathcal{D}$ is a $\GL(5,\R)$-equivariant equivalence.
\end{lemma}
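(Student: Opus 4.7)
The plan is to prove the three categorical conditions for equivalence in turn: essential surjectivity of $F$, full faithfulness, and $\GL(5,\R)$-equivariance. Each is a routine unwinding of the definitions.

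\emph{Essential surjectivity.} Since $\mathcal{D}$ is discrete, I need to realize each $d\in\mathcal{D}$ as $F(\lie{g},u)$ exactly. I would construct $\lie{g}_d$ as the vector space $(\R^5)^*$ with bracket defined by duality: for $\alpha\in\R^5=\lie{g}_d^*$ and $X,Y\in\lie{g}_d$, set $\alpha([X,Y])=-d\alpha(X,Y)$. Antisymmetry is automatic, and the Jacobi identity for this bracket is the standard fact that $d$ extends to a degree-one derivation of $\Lambda\R^5$ squaring to zero precisely when $\hat d\circ d=0$. Taking $u=\id\colon\R^5\to\lie{g}_d^*$, the Chevalley--Eilenberg differential $d_{\lie{g}_d}$ coincides with $d$ by construction, so $F(\lie{g}_d,u)=d$.

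\emph{Full faithfulness.} The requirement $u=f^t\circ v$ already determines $f^t=u\circ v^{-1}$, hence $f$, uniquely from $u$ and $v$; this gives faithfulness, matching the fact that $\mathcal{D}$ has at most one morphism between any two objects. For fullness, suppose $F(\lie{g},u)=F(\lie{h},v)$, i.e.\ $u^{-1}\circ d_{\lie{g}}\circ u=v^{-1}\circ d_{\lie{h}}\circ v$. Since $u$ and $v$ extend multiplicatively to the exterior algebras, rearranging yields $d_{\lie{g}}\circ(u\circ v^{-1})=(u\circ v^{-1})\circ d_{\lie{h}}$ on $\lie{h}^*$, so $\phi:=u\circ v^{-1}\colon\lie{h}^*\to\lie{g}^*$ is a cochain map between Chevalley--Eilenberg complexes. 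By the standard duality between Lie algebra homomorphisms and cochain maps of these complexes, $\phi=f^t$ for a unique Lie algebra homomorphism $f\colon\lie{g}\to\lie{h}$, supplying the required element of $\Hom_{(D)}((\lie{g},u),(\lie{h},v))$.

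\emph{Equivariance.} A direct calculation gives
\[F(J_g(\lie{g},u))=(u\circ g)^{-1}\circ d_{\lie{g}}\circ(u\circ g)=g^{-1}\circ F(\lie{g},u)\circ g=\mu(g)\bigl(F(\lie{g},u)\bigr),\]
and $F$ fixes morphisms, so $F\circ J_g=\mu(g)\circ F$. No step poses a genuine obstacle; the only care needed is to treat $u,v$ consistently as algebra isomorphisms on the full exterior algebras when manipulating the expression $u^{-1}\circ d_{\lie{g}}\circ u$ as a would-be differential on $\Lambda\R^5$.
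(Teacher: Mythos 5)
Your proposal is correct and follows essentially the same route as the paper: both arguments are a direct unwinding of the definitions, with essential surjectivity coming from the induced Lie algebra structure on $(\R^5)^*$ (Jacobi $\Leftrightarrow\hat d\circ d=0$) and equivariance by the computation $(u\circ g)^{-1}\circ d_{\lie{g}}\circ(u\circ g)=\mu(g)F(\lie{g},u)$. The only presentational difference is that the paper explicitly records the well-definedness check (a morphism $f$ with $u=f^t\circ v$ forces $F(\lie{g},u)=F(\lie{h},v)$) and dismisses the condition on morphisms as trivial, whereas you spell out full faithfulness via the duality between Lie algebra homomorphisms and cochain maps of Chevalley--Eilenberg complexes --- a duality whose forward direction subsumes the paper's well-definedness step.
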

\begin{proof}
First, we must prove that $F$ is well defined. This is because if $f$ is in $\Hom(\lie{g},u),(\lie{h},v))$, then $u=f^t\circ v$, so
\[F(\lie{g},u)=u^{-1}d_{\lie{g}} u = v^{-1}\circ (f^t)^{-1}\circ d_{\lie{g}} f^t\circ v = v^{-1}d_{\lie{h}}\circ v=F(\lie{h},v).\]
Equivariance is proved similarly. The condition on morphisms is trivial. Finally, given $d$ in $\mathcal{D}$, $\R^5$ has an induced Lie algebra structure such that $F(\R^5,\id)$ is $d$ itself.
\end{proof}
The space $\mathcal{D}$, whilst not mysterious in itself, is endowed with additional structure by the functor $F$, as becomes clear in the following lemma.
\begin{lemma}
\label{lemma:inducedvector}
Let $A(t)$ be a one-parameter family in $\gl(5,\R)$, and let $(\lie{g},u(t))$ be a one-parameter family in $(D)$ satisfying the differential equation
\[u'(t)=u(t)\circ A^T(t).\]
Then the basis $e^i(t)=u(t)(e_i)$ satisfies the differential equation
\begin{equation}
 \label{eqn:Aprimee}
\begin{pmatrix} (e^1)'\\ \dots\\ (e^5)'\end{pmatrix} = A(t)\begin{pmatrix} e^1\\ \dots\\ e^5\end{pmatrix}
\end{equation}
and the curve in $\mathcal{D}$ given by $d(t)=F(\lie{g},u(t))$ satisfies
\[d'(t)=\mu_{*e}(A^T(t))d(t).\]
\end{lemma}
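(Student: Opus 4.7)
The plan is to verify the two displayed equations separately, handling the first by a direct computation and the second by exploiting the $\GL(5,\R)$-equivariance of $F$ established in the previous lemma.

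For the coframe equation, I would apply the hypothesis $u'(t) = u(t) \circ A^T(t)$ to the standard basis vector $e_i \in \R^5$. Since $A^T(t) e_i = \sum_j A_{ij}(t)\, e_j$, this immediately gives $(e^i)'(t) = u(t)(A^T(t) e_i) = \sum_j A_{ij}(t)\, e^j(t)$, which is exactly row $i$ of the matrix identity \eqref{eqn:Aprimee}. This part is essentially a one-line computation once the convention $(A^T)_{ji} = A_{ij}$ is spelled out.

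For the evolution of $d(t) = F(\lie{g},u(t))$ in $\mathcal{D}$, I would fix an arbitrary reference time $t_0$ and factor $u(t) = u(t_0) \circ h(t)$, where $h(t) := u(t_0)^{-1} \circ u(t) \in \GL(5,\R)$. Then $h(t_0) = \id$, and differentiating yields $h'(t) = u(t_0)^{-1} \circ u'(t) = h(t) \circ A^T(t)$, so $h'(t_0) = A^T(t_0)$. By the $\GL(5,\R)$-equivariance of $F$, $d(t) = F(\lie{g}, u(t_0)\circ h(t)) = F(J_{h(t)}(\lie{g},u(t_0))) = \mu(h(t))\, d(t_0)$. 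Differentiating this identity at $t = t_0$ and invoking the definition of the infinitesimal action $\mu_{*e}$ as the derivative of $g \mapsto \mu(g)\, d(t_0)$ at $g = \id$, I obtain $d'(t_0) = \mu_{*e}(h'(t_0))\, d(t_0) = \mu_{*e}(A^T(t_0))\, d(t_0)$. Since $t_0$ was arbitrary, this is precisely the stated ODE.

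The only point requiring real attention is the implicit extension of $u(t)$ and $h(t)$ from $\R^5$ to $\Lambda^2\R^5$ that appears both in the definition of $F$ and in the formula $(\mu(g)d)(\beta) = g^{-1} d(g\beta)$. Because these extensions satisfy $(u \circ h)|_{\Lambda^2} = u|_{\Lambda^2} \circ h|_{\Lambda^2}$ for composable isomorphisms, the factorization $u(t) = u(t_0) \circ h(t)$ passes to $\Lambda^2$ verbatim and no extra terms appear. Beyond this bookkeeping there is no deeper obstacle: the content of the lemma is essentially the infinitesimal form of the observation that $\mathcal{D}$ encodes $(D)$ up to the $\GL(5,\R)$-action, so an ODE on the frame $u(t)$ becomes, after factoring out the starting frame, the derivative of a $\GL(5,\R)$-orbit in $\mathcal{D}$, which by definition is computed by $\mu_{*e}$.
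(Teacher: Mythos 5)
Your proposal is correct and follows essentially the same route as the paper: the coframe equation is the transposed form of the hypothesis (the paper phrases this as identifying the column of the $e^i(t)$ with $u^T$), and the evolution of $d(t)$ is obtained exactly as you do, by factoring $u(t)$ through a curve in $\GL(5,\R)$, invoking the equivariance $d(t)=\mu(h(t))\,d(t_0)$, and differentiating at the identity; the paper merely bases the factorization at $t=0$ and then re-bases at $t_0$, whereas you base at $t_0$ directly. No gaps.
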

\begin{proof}
Equation~\eqref{eqn:Aprimee} follows from the fact that
\[\begin{pmatrix} e^1\\ \dots\\ e^5\end{pmatrix}\]
can be identified with $u^T$.

For the second part of the statement, let $u(t)=u(0)g(t)$, with $g(t)$ a curve in $\GL(5,\R)$. Then
\[F(\lie{g},u(t))=g^{-1}(t)F(\lie{g},u(0))g(t)=\mu(g(t))F(\lie{g},u(0)),\]
i.e. $d(t)=\mu(g(t))d(0)$.
By construction
$u'(t)=u(0)g'(t)$
so
\[g'(t)=g(t)A^T(t).\]
In order to compute $d'(t)$ at $t=t_0$, set $g=g(t_0)$. Then
\[d(t)=\mu(g^{-1}g(t))d(t_0),\]
so
\[d'(t_0)=\mu_{*e}(A^T(t_0))d(t_0).\qedhere\]
\end{proof}
In the following definition, we give $\gl(5,\R)$ the structure of a discrete category.
\begin{definition}
An \dfn{infinitesimal gauge transformation} on $(D)$ is a functor $X\colon (D)\to \gl(5,\R)$ such that the functor $\hat X$ induced by the diagram
\[\xymatrix{ (D) \ar[dr]^X\ar[d]^F \\ \mathcal{D}\ar[r]^{\hat X} &\gl(5,\R)}\]
is a smooth map.
\end{definition}
The functoriality guarantees that $X_{(\lie{g},u)}=X_{(\lie{g},f^t\circ u)}$ whenever $f\colon\lie{g}\to\lie{g}$ is an isomorphism. This means that $X_{(\lie{g},u)}$ is unaffected when $u$ is acted upon by an automorphism of $\lie{g}$. This invariance is factored out when we pass to $\hat X$.

\begin{proposition}
\label{prop:inducedvectorfield}
An infinitesimal gauge transformation $X$ on $(D)$ induces a vector field $\tilde X$ on $\mathcal{D}$ by
\[\tilde X_d=\mu_{*e}(\hat X^T_{d})d.\]
If $(\lie{g},u(t))$ is a one-parameter family in $(D)$ satisfying the differential equation
\[u'(t)=u(t)\circ X^T_{u(t)},\]
then $d(t)=F(\lie{g},u(t))$ is an orbit of $\tilde X$.
\end{proposition}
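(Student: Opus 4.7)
The proposition breaks into two verifications: (i) $\tilde X$ is a well-defined smooth vector field on $\mathcal{D}$, and (ii) the curve $d(t)=F(\lie{g},u(t))$ arising from the stated ODE on $(D)$ is an integral line of $\tilde X$.

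For (i), the plan is to observe that the right $\GL(5,\R)$-action $\mu$ on the ambient space $\Hom(\R^5,\Lambda^2\R^5)$ preserves the subvariety $\mathcal{D}$, because the Jacobi-type condition $\hat d\circ d=0$ is $\GL(5,\R)$-invariant under $\mu$. Consequently, for each $d\in\mathcal{D}$ the infinitesimal generator
\[\gl(5,\R)\longrightarrow T_d\mathcal{D},\qquad A\longmapsto\mu_{*e}(A)\,d,\]
is a well-defined linear map into $T_d\mathcal{D}$. Plugging in $A=\hat X^T_d$ therefore produces a genuine tangent vector at $d$, and smoothness of the assignment $d\mapsto\tilde X_d$ follows at once from the smoothness of $\hat X\colon\mathcal{D}\to\gl(5,\R)$ (which is part of the definition of an infinitesimal gauge transformation) combined with the smoothness of $\mu$.

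For (ii), the strategy is to apply Lemma~\ref{lemma:inducedvector} with the choice $A(t):=X_{(\lie{g},u(t))}$. The hypothesis $u'(t)=u(t)\circ X^T_{u(t)}$ then becomes exactly the hypothesis $u'(t)=u(t)\circ A^T(t)$ of that lemma, so the lemma yields
\[d'(t)=\mu_{*e}\bigl(A^T(t)\bigr)\,d(t)=\mu_{*e}\bigl(X^T_{(\lie{g},u(t))}\bigr)\,d(t).\]
It only remains to identify $X_{(\lie{g},u(t))}$ with $\hat X_{d(t)}$, which is precisely the relation $X=\hat X\circ F$ encoded in the commutative diagram defining $\hat X$, evaluated at $(\lie{g},u(t))$. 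Substituting gives $d'(t)=\mu_{*e}\bigl(\hat X^T_{d(t)}\bigr)\,d(t)=\tilde X_{d(t)}$, as required.

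There is essentially no serious obstacle here: Lemma~\ref{lemma:inducedvector} does the heavy lifting and the present proposition is its natural repackaging, once the $(D)$-indexed $\gl(5,\R)$-valued data is viewed as a smooth function on the model variety $\mathcal{D}$ via the functor $F$. The only subtlety deserving a moment's thought is the preservation of $\mathcal{D}$ under $\mu$ --- which ensures $\tilde X$ is tangent to $\mathcal{D}$ rather than merely to the ambient space --- but this follows from the manifest $\GL(5,\R)$-equivariance of the condition cutting $\mathcal{D}$ out.
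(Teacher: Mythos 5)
Your proof is correct and follows essentially the same route as the paper, whose entire argument is to set $A(t)=X_{u(t)}$ in Lemma~\ref{lemma:inducedvector} and invoke the defining relation $X=\hat X\circ F$. The extra remarks on well-definedness and tangency of $\tilde X$ to $\mathcal{D}$ are sound and merely make explicit what the paper leaves implicit.
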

\begin{proof}
It suffices to set $A(t)=X_{u(t)}$ in Lemma~\ref{lemma:inducedvector}.
\end{proof}
Replacing $X$ with $\hat X$ behaves well with respect to equivariance. Indeed, consider the natural action of $\GL(5,\R)$ on vector fields on $\mathcal{D}$
\[(g\cdot Y)_x=\mu(g)Y_{\mu(g^{-1})x}.\]
\begin{lemma}
\label{lemma:invariance}
Let $X$ be an infinitesimal gauge transformation on $(D)$. Suppose that $\hat X\colon\mathcal{D}\to\gl(5,\R)$ is equivariant under some $G\subset\GL(5,\R)$, i.e.
\[\hat X_{\mu(g)x}=\Ad(g^{-1})\hat X_x, \quad g\in G.\]
Then $\tilde X$ is $G$-invariant.
\end{lemma}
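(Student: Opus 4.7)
The plan is to verify the invariance identity
\[\mu(g)_* \tilde X_x = \tilde X_{\mu(g)x}\]
by unpacking both sides using the definition $\tilde X_y = \mu_{*e}(\hat X^T_y)\,y$ and the standard transformation law of fundamental vector fields under a group action.

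For the pushforward on the left-hand side, I would use the fact that $\mu$ is a right action --- as is immediate from $(\mu(g)d)(\beta)=g^{-1}d(g\beta)$, which gives $\mu(g)\mu(h)=\mu(hg)$. Consequently its fundamental vector fields transform as
\[(\mu(g))_*\bigl(\mu_{*e}(A)\bigr)_x = \mu_{*e}\bigl(\Ad(g^{-1})A\bigr)_{\mu(g)x}\]
for every $A\in\gl(5,\R)$; one proves this by differentiating the one-parameter family $\mu(g)\mu(\exp tA) x = \mu(\exp(tA)\,g)x = \mu(\exp(t\Ad(g^{-1})A))\mu(g)x$ at $t=0$. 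Applied to $A=\hat X^T_x$ this gives
\[\mu(g)_*\tilde X_x = \mu_{*e}\bigl(\Ad(g^{-1})\hat X^T_x\bigr)_{\mu(g)x}.\]
By definition the other side is $\tilde X_{\mu(g)x} = \mu_{*e}(\hat X^T_{\mu(g)x})_{\mu(g)x}$, so invariance of $\tilde X$ is reduced to the identity
\[\hat X^T_{\mu(g)x} = \Ad(g^{-1})\hat X^T_x,\]
which is the transposed form of the equivariance hypothesis $\hat X_{\mu(g)x}=\Ad(g^{-1})\hat X_x$; this last step relies on the compatibility between transposition in $\gl(5,\R)$ and the adjoint action, as used consistently in Section~\ref{sec:D} when passing between endomorphisms of $\R^5$ and of its dual.

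The main obstacle to watch out for is bookkeeping: the signs coming from the right-action nature of $\mu$ and the transposes used to translate between $\gl(5,\R)$ acting on $\R^5$ and on $(\R^5)^*$. Once these conventions are pinned down, the argument collapses to a two-line calculation in which the hypothesis is applied exactly once; the underlying principle---equivariance of a Lie-algebra valued map yields invariance of the associated fundamental vector field---is entirely standard.
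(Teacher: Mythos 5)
Your proof is correct and is essentially the paper's own argument read in the opposite direction: the paper starts from $\tilde X_{\mu(g)d}$, applies the hypothesis, and uses the identity $\mu_{*e}(\Ad(g^{-1})A)=\Ad(\mu(g))\mu_{*e}(A)$ (the same fundamental-vector-field transformation law you derive by differentiating $\mu(\exp tA)$), arriving at $(g\cdot\tilde X)_{\mu(g)d}$. The transpose/adjoint compatibility you flag at the end is glossed over in exactly the same way in the paper's proof, and is harmless in the intended application since the relevant group is $\LieG{U}(2)\subset\SO(5)$, where $g^T=g^{-1}$.
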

\begin{proof}
By hypothesis
\begin{multline*}
\tilde X_{\mu(g)d}=\mu_{*e}(\hat X^T_{\mu(g)d})\mu(g)d=\mu_{*e}(\Ad(g^{-1})(\hat X^T_{d}))d=(\Ad(\mu(g))\mu_{*e}\hat X^T_{d})\mu(g)d\\
=\mu(g)\mu_{*e}\hat X^T_{d}\mu(g^{-1})\mu(g)d=\mu(g)\mu_{*e}\hat X^T_{d}d=(g\cdot \tilde X)_{\mu(g)d}.\qedhere
\end{multline*}
\end{proof}

We can now restate Theorem~\ref{thm:gauge} in the language of this section, introducing the infinitesimal gauge transformation on $(D)$ given by
\[X(\lie{g},u)=Q_P(v),\]
where $P=v\SU(2)$ is the $\SU(2)$-structure on $\lie{g}$ defined by $v=(u^{T})^{-1}$.
\begin{proposition}
$X$ is an infinitesimal gauge transformation on $(D)$, and the vector field $\tilde X$ induced on $\mathcal{D}$ is $\LieG{U}(2)$-invariant. Given a Lie algebra $\lie{g}$ and a one-parameter family of hypo structures $(\alpha(t),\omega_i(t))$ on $\lie{g}$ that satisfies the hypo evolution equations, there exists a one-parameter family of coframes $u(t)\colon\R^5\to\lie{g}^*$ such that
\begin{itemize}
\item $u(t)$ is adapted to $(\alpha(t),\omega_i(t))$;
\item the curve $d(t)=F(\lie{g},u(t))$ is an orbit of $\tilde X$ in $\mathcal{D}$;
\item $u'(t)=u(t)\circ \hat X_{d(t)}$.
\end{itemize}
\end{proposition}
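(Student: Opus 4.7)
The plan is to verify each of the three bullets in turn, relying on Theorem~\ref{thm:gauge} for the existence of the adapted coframe family.

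First I would check that $X$ is a well-defined functor. If $f\colon(\lie{g},u)\to(\lie{h},v)$ is a morphism in $(D)$, then $u=f^t\circ v$, and a short computation shows that the dual frames $v_{\lie g}=(u^T)^{-1}$ and $v_{\lie h}=(v^T)^{-1}$ satisfy $f\circ v_{\lie g}=v_{\lie h}$. Since $f$ is a Lie algebra isomorphism, it identifies the $\SU(2)$-structures $P_{\lie g}=v_{\lie g}\SU(2)$ and $P_{\lie h}=v_{\lie h}\SU(2)$ and pulls back the intrinsic torsion of one to that of the other; as $Q_P$ is built from the torsion components via the explicit formula of the previous proposition, its values at $v_{\lie g}$ and at $v_{\lie h}$ coincide in $\gl(5,\R)$. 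Thus $X$ descends to a map $\hat X\colon\mathcal{D}\to\gl(5,\R)$, and smoothness is immediate because once a coframe is fixed the torsion components, and hence the entries of $Q_P$, are polynomial in the structure constants $d$.

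For the $\LieG{U}(2)$-invariance of $\tilde X$, Lemma~\ref{lemma:invariance} reduces the problem to proving $\hat X_{\mu(g)d}=\Ad(g^{-1})\hat X_d$ for $g\in\LieG{U}(2)$. The $\SU(2)$-part is the equivariance of $Q_P$ already established. The remaining circle $\LieG{U}(1)\subset\LieG{U}(2)/\SU(2)$ acts by $\omega_2+i\omega_3\mapsto e^{-2i\theta}(\omega_2+i\omega_3)$ while fixing $\omega_1$ and $\alpha$; inspecting the structure equations~\eqref{eqn:hypoIT} one sees that $f,g,\beta,\omega^-$ are $\LieG{U}(1)$-invariant whereas the pair $(\sigma_2^-,\sigma_3^-)$ rotates by $2\theta$, and substituting these transformation laws into the explicit matrix form of $Q_P$ shows that $Q_P$ is conjugated by the corresponding element of $\LieG{U}(1)$. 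This is the most delicate step of the proof: the circle changes the $\SU(2)$-refinement of the underlying $\LieG{U}(2)$-structure, so it genuinely requires checking the transformation of every component rather than appealing directly to the equivariance statement from Section~\ref{sec:gauge}.

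Finally, given a family of hypo structures $(\alpha(t),\omega_i(t))$ on $\lie{g}$, I would pick an initial adapted coframe $u(0)$ and set $v(0)=(u(0)^T)^{-1}$. Theorem~\ref{thm:gauge} produces gauge transformations $s_t$, with $s_0\equiv\id$ and $s_t's_t^{-1}=-Q_{P_t}$, such that $v(t)=v(0)\cdot s_t(v(0))$ stays in $P_t$; the dual coframes $u(t)=(v(t)^T)^{-1}$ are therefore adapted. Differentiating $v(t)$ and using the $\GL(5,\R)$-equivariance $Q_{P_t}(v(0))=s_t(v(0))Q_{P_t}(v(t))s_t(v(0))^{-1}$ gives $v'(t)=-v(t)\hat X_{d(t)}$, and a short computation dualizing via $v(t)^Tu(t)=\id$ converts this into $u'(t)=u(t)\circ\hat X_{d(t)}$. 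Proposition~\ref{prop:inducedvectorfield} then yields that $d(t)=F(\lie{g},u(t))$ is an orbit of $\tilde X$, completing all three bullets.
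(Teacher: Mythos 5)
Your proposal is correct and follows essentially the same route as the paper: functoriality of $X$ from the automorphism-invariance of the intrinsic torsion, Lemma~\ref{lemma:invariance} for the invariance of $\tilde X$, and Theorem~\ref{thm:gauge} plus dualization and Proposition~\ref{prop:inducedvectorfield} for the three bullets (your sign bookkeeping in $s_t^{-1}s_t'=-X(\lie{g},u(t))$ is internally consistent and matches the stated conclusion $u'=u\circ\hat X_{d}$). The only substantive difference is that you verify the $\LieG{U}(2)$-invariance by explicitly tracking how the central circle rotates $(\sigma_2^-,\sigma_3^-)$ while fixing $f,g,\beta,\omega^-$, where the paper simply appeals to linearity and equivariance of the intrinsic torsion map; this is a welcome filling-in of a detail rather than a different argument.
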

\begin{proof}
The intrinsic torsion is invariant under automorphisms of $\lie{g}$, so $X$ is indeed an infinitesimal gauge transformation. Invariance follows from Lemma~\ref{lemma:invariance} and the fact that $\hat X$ factors through the intrinsic torsion map, which is linear and equivariant.

Let $v(0)=(u(0)^{-1})^T$. Theorem~\ref{thm:gauge} gives a one-parameter family of adapted frames
\[v(t)=v(0)s_t, \quad s_t's_t^{-1}=-Q_{P_t}(v(0)),\]
which in turn determines a one-parameter family of coframes
\[u(t)=u(0)(s_t^{-1})^T;\]
then
\[Q_{P_t}(v(0))=Q_{P_t}(v(t)s_t^{-1})=\Ad_{s_t}(Q_{P_t}(v(t))) =-\Ad_{s_t} X(\lie{g},u(t)),\]
so $s_t^{-1}s_t'=X(\lie{g},u(t))$ and
\[u'(t)=u(t)\circ X^T(\lie{g},u(t)).\]
Now, by Proposition~\ref{prop:inducedvectorfield}, $d(t)$ is an orbit of the induced vector field $\tilde X$.
Finally, $X^T(\lie{g},u(t))=\hat X_{d(t)}$ holds by definition and because $X$ is symmetric.
\end{proof}
\section{Hypo nilmanifolds}
There are exactly nine real nilpotent Lie algebras of dimension five, classified in \cite{Magnin}. It was proved in \cite{ContiSalamon} that only six out of the nine carry a hypo structure. In this section, we classify the hypo structures on these six Lie algebras in terms of the space $\mathcal{D}$ introduced in Section~\ref{sec:D}.

First we need a definition. We say $d$ in $\mathcal{D}$ is \dfn{hypo} if
\[d(e^{12}+e^{34})=0, \quad d(e^{135}+e^{425})=0, \quad d(e^{145}+e^{235})=0.\]
This means that if $d=F(\lie{g},u)$, the $\SU(2)$-structure on $\lie{g}$ determined by the coframe $u$ (see Section~\ref{sec:SU2}) is hypo in the sense that the defining forms $\omega_1$, $\psi_2$ and $\psi_3$ are closed under the Chevalley-Eilenberg operator $d_{\lie{g}}$. Then any Lie group $G$ with Lie algebra $\lie{g}$ has an induced left-invariant hypo structure. Moreover, if $\Gamma$ is a discrete uniform subgroup, a hypo structure is induced on the compact quotient $\Gamma\backslash G$. In the case that $\lie{g}$ is nilpotent, such a subgroup always exists.

Now the coframe $u$ determines the same $\SU(2)$-structures as the other coframes in its $\SU(2)$-orbit; so, isomorphism classes of hypo Lie algebras are elements of
\[\{d\in\mathcal{D}\mid d \text{ is hypo}\}/\SU(2).\]
On the other hand, the space of $\SU(2)$-structures on an isomorphism class of Lie algebras can be expressed as the biquotient
\[(\GL(5,\R)d)/\SU(2) \cong \Aut(\lie{g})\backslash\Iso(\R^5,\lie{g}^*)/\SU(2),\]
where $d=F(\lie{g},u)$ for some  representative $\lie{g}$ and some coframe $u$.

The hypo equations are actually invariant under a larger group than $\SU(2)$, namely $U(2)\times\R^*\times\R^*\times\Z_2$. From the point of view of the evolution equations, it is more natural to consider the quotient by $U(2)$, which is the largest subgroup that leaves the flow invariant (although the diagonal $\R^*$ acting by scalar multiplication only affects the flow by a change of time scale). Thus, for all $d$ in $\mathcal{D}$ we define
\[H_d=\{gd\mid g\in\GL(5,\R), gd \text{ is hypo}\}\!/\raisebox{-.7ex}{\ensuremath{\LieG{U}(2)}}.\]
In this section we compute $H_d$ for all five-dimensional nilpotent Lie algebras. In fact, we show that these Lie algebras come in a hierarchy, with exactly three Lie algebras lying at its top level. It will follow that that the study of hypo nilpotent Lie algebras is reduced to the study of three families of hypo Lie algebras.

The first family corresponds generically to $(0,0,12,13,14)$; it is defined as
\begin{gather*}
\mathcal{M}_1=\left\{d_{\lambda,\mu,h,k}\mid h,k,\lambda,\mu\in\R\right\},\\
d_{\lambda,\mu,h,k}=\left(\lambda e^{35},he^{35}+ke^{15}, 0,(-\lambda e^2+he^1+\mu e^3)\wedge e^5,0\right).
\end{gather*}
The second family, corresponding generically to  $(0,0,0,12,13+24)$, consists of elements of the form
\begin{multline}
\label{eqn:elementofM2}
d_{x,y,h,k,\lambda,\mu}=
\bigl(0, xe^{34}+\lambda e^{35},0, x(e^{14}-e^{23})-ye^{34}+\lambda e^{15}-\mu e^{35},\\
-h(e^{14}-e^{23})+ke^{34}-x e^{15}+y e^{35}\bigr).
\end{multline}
More precisely, this family is an algebraic subvariety of $\mathcal{D}$ given by
\[\mathcal{M}_2=\left\{d_{x,y,h,k,\lambda,\mu}\mid
\rk\begin{pmatrix} x&y&\lambda&\mu\\ h&k&x&y\end{pmatrix}<2\right\}.\]
The third family has some $\LieG{U}(2)$-orbits in common with $\mathcal{M}_2$; it is defined as
\[\mathcal{M}_3=\left\{d_{\lambda,\mu}\mid \lambda,\mu\in\R\right\},\quad d_{\lambda,\mu}=\left(0,0,0,0,(\lambda+\mu)e^{12}+(\lambda-\mu) e^{34})\right). \]
\begin{theorem}\label{thm:hierarchy}
An element of $\mathcal{D}$ is hypo if and only if its $\LieG{U}(2)$ orbit intersects $\mathcal{M}_1$, $\mathcal{M}_2$ or $\mathcal{M}_3$.
The isomorphism classes of non-abelian nilpotent Lie algebras of dimension five that admit a hypo structure can be arranged in the  diagram
\begin{multline*}
\\
\xymatrix{
(0,0,0,12,13+24)\ar@{>>}@/^1cm/[rr]\ar@{>}[r]\ar[dr] &		(0,0,0,0,12+34)\ar[r] & (0,0,0,0,12)\\
(0,0,12,13,14)\ar@{>>}[r]\ar@/_1.5cm/[urr] &	(0,0,0,12,13)\ar[ur]
},\\
\end{multline*}
where $d_1$ is connected by an arrow to $d_2$ if the closure of the $\GL(5,\R)$-orbit of $d_1$ contains $d_2$, and the arrow has a double head if $\overline{H_{d_1}}$ contains $H_{d_2}$.
\end{theorem}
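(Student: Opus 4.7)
The theorem has two components: a normal-form assertion that the hypo locus in $\mathcal{D}$ equals $\LieG{U}(2)\cdot(\mathcal{M}_1\cup\mathcal{M}_2\cup\mathcal{M}_3)$, and an orbit-closure diagram recording which Lie algebras degenerate to which (single arrows) and in which cases the degeneration can be carried out within the $\LieG{U}(2)$-saturated hypo locus $H_d$ (double arrows). I would attack the two components separately.

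For the normal-form part, the ``if'' direction is a direct computation. For each family one substitutes the explicit expressions for $d_{\lambda,\mu,h,k}$, $d_{x,y,h,k,\lambda,\mu}$ and $d_{\lambda,\mu}$ into $d(e^{12}+e^{34})=0$, $d(e^{135}+e^{425})=0$ and $d(e^{145}+e^{235})=0$, applies the Leibniz rule for the derivation $\hat d$, and observes that each expression cancels term-by-term; the coefficients appearing in the definitions of $\mathcal{M}_1$, $\mathcal{M}_2$ and $\mathcal{M}_3$ are exactly those forced by these cancellations. One also checks $\hat d\circ d=0$: for $\mathcal{M}_3$ and $\mathcal{M}_1$ this holds for all parameter values, whereas for $\mathcal{M}_2$ the Jacobi identity produces precisely the rank condition on the $2\times 4$ matrix in the definition, so the family is genuinely semi-algebraic.

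The converse is a case analysis over the six nilpotent isomorphism types of dimension five admitting a hypo structure, as classified in \cite{ContiSalamon}. The abelian case yields $d=d_{0,0}\in\mathcal{M}_3$. In each non-abelian case I would choose a basis of $\lie{g}^*$ of ``nilpotent'' type, so that $d_{\lie g}e^i$ involves only $e^j$ with $j<i$; write a generic hypo structure $(\omega_1,\psi_2,\psi_3)$ as a linear combination of the associated basis forms; impose the closedness conditions $d_{\lie g}\omega_1=d_{\lie g}\psi_2=d_{\lie g}\psi_3=0$; and exploit the $\LieG{U}(2)$-action to eliminate redundant moduli. Concretely, the $\SO(4)$ subgroup acts on $\ker\alpha$ preserving $\omega_1$ up to the $\SO(3)$-rotation of $\{\omega_1,\omega_2,\omega_3\}$, the factor $\LieG{U}(1)\subset\LieG{U}(2)$ rotates $(\omega_2,\omega_3)$, and the center acts by rescaling the coframe; the techniques of \cite{Salamon:ComplexStructures,ContiSalamon,ContiFernandezSantisteban} can be used to single out invariant flags (typically via the center of $\lie{g}$ and the kernel of $\ad$) on which to pin down the frame. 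The count of surviving moduli matches the number of parameters in the corresponding $\mathcal{M}_i$.

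For the orbit-closure diagram the single-head arrows encode Lie-algebra degenerations and can be realized by diagonal contractions: for suitable weights $a_i$, the curve $g_t=\diag(t^{a_1},\ldots,t^{a_5})$ satisfies $\lim_{t\to 0}\mu(g_t)d_1=d_2$, hence $d_2\in\overline{\GL(5,\R)\cdot d_1}$; the specific contractions relating the filiform algebra $(0,0,12,13,14)$, the algebra $(0,0,0,12,13+24)$ and the simpler algebras in the diagram are classical. The two double-head arrows are stronger, since the degeneration must occur within the hypo locus modulo $\LieG{U}(2)$. For each of them I would select, inside $\mathcal{M}_1$ (respectively $\mathcal{M}_2$), a one-parameter family of hypo elements whose generic member has isomorphism type $(0,0,12,13,14)$ (respectively $(0,0,0,12,13+24)$) and whose limit lies in the same family and represents the target algebra, so that $H_{d_2}\subset\overline{H_{d_1}}$. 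The main obstacle I expect is the converse normal-form step for $(0,0,0,12,13+24)$: the semi-algebraic rank condition makes $\mathcal{M}_2$ non-smooth along its strata boundaries, and verifying that every $\LieG{U}(2)$-orbit of a hypo structure on this algebra can be brought inside $\mathcal{M}_2$ requires a careful interplay between the $\LieG{U}(2)$-isotypic decomposition of the intrinsic torsion and the commutator filtration of $\lie{g}$.
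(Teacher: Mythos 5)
Your overall strategy coincides with the paper's. The paper proves the theorem by computing, one isomorphism class at a time, the set $H_d$ as an explicit semi-algebraic subset of $\mathcal{M}_1/\LieG{U}(2)$, $\mathcal{M}_2/\LieG{U}(2)$ or $\mathcal{M}_3/\LieG{U}(2)$, using intrinsically defined subspaces $V^k\subset(\R^5)^*$ (e.g.\ $\ker d$, $\{\beta\mid(d\beta)^2=0\}$, $\{\beta\mid\beta\wedge d\gamma=0\text{ for all }\gamma\}$) to locate $e^5$ and the remaining $e^i$, and then normalizing by the residual $\LieG{U}(2)$-action exactly as you describe; the only cosmetic difference is that the paper keeps the $\SU(2)$-forms standard and varies $d$ inside a fixed $\GL(5,\R)$-orbit, rather than fixing a nilpotent coframe and varying the forms, which is the same computation read through the biquotient identification $(\GL(5,\R)d)/\SU(2)\cong\Aut(\lie{g})\backslash\Iso(\R^5,\lie{g}^*)/\SU(2)$. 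Your identification of case \emph{b}) for $(0,0,0,12,13+24)$ as the hard point, and of the rank condition as the Jacobi identity for $\mathcal{M}_2$, both match the paper.

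The one step in your plan that does not work as written is the deduction of the double-head arrows. Exhibiting a single one-parameter family in $\mathcal{M}_1$ (resp.\ $\mathcal{M}_2$) whose generic member has type $(0,0,12,13,14)$ (resp.\ $(0,0,0,12,13+24)$) and which degenerates to a point of the target algebra only shows that $H_{d_2}\cap\overline{H_{d_1}}$ is nonempty; it does not give $H_{d_2}\subset\overline{H_{d_1}}$, since $H_{d_2}$ is a positive-dimensional family and a single curve produces a single limit point. The deduction the paper actually makes is to read the containment off the classification itself: each lemma records not only $H_{d_1}$ but also the density statement $\overline{H_{d_1}}=\mathcal{M}_i/\LieG{U}(2)$ (i.e.\ the inequalities such as $\lambda>0$, $k\neq0$ cut out a dense subset of the family), while the lemmas for $(0,0,0,12,13)$ and $(0,0,0,0,12)$ show $H_{d_2}\subset\mathcal{M}_1/\LieG{U}(2)$ and $H_{d_2}\subset\mathcal{M}_2/\LieG{U}(2)$ respectively; the containment $\overline{H_{d_1}}\supset H_{d_2}$ is then immediate. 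Since your step on the normal forms produces exactly this data, the gap is repairable, but the closure computation must be carried out and stated explicitly in each case, which your outline omits.
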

We already know from \cite{ContiSalamon} that the isomorphism classes for which $H_d$ is nonempty are exactly those appearing in the diagram. In order to prove the theorem, we must compute the space $H_d$ for $d$ in each of these isomorphism classes. The rest of this section consists in this computation, broken up into several lemmas. We use the following notation: if $V\subset\mathcal{D}$, then $V/U(2)$ denotes the image of $V$ under the projection $\mathcal{D}\to\mathcal{D}/U(2)$, regardless of whether $V$ is the union of orbits.

\begin{lemma}
\label{lemma:family1}
Let $d$ be in the $\GL(5,\R)$-orbit of $(0,0,12,13,14)$. Then
\[H_d = \{d_{\lambda,\mu,h,k}\in\mathcal{M}_1\mid \lambda>0,k\neq0\}\!/\raisebox{-.7ex}{\ensuremath{\LieG{U}(2)}};\]
in particular, the closure of $H_d$ is $\mathcal{M}_1/\LieG{U}(2)$.
\end{lemma}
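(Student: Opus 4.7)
The plan is to parametrize hypo $\SU(2)$-structures on the filiform nilpotent Lie algebra $\lie{g}_0=(0,0,12,13,14)$, modulo the residual $\LieG{U}(2)$ action, and identify them with the points of $\mathcal{M}_1$ satisfying $\lambda>0$ and $k\neq 0$.

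First, I would extract the intrinsic algebraic data of $\lie{g}_0$. It is the unique $5$-dimensional filiform nilpotent Lie algebra; its descending central series has dimensions $(5,3,2,1,0)$, yielding an intrinsic dual flag $0\subset\ker d_{\lie{g}_0}\subset\dotsb\subset\lie{g}_0^*$ of dimensions $(0,2,3,4,5)$. This flag is preserved by every point in the $\GL(5,\R)$-orbit of $F(\lie{g}_0,\id)$; in particular, the space of closed $1$-forms is only $2$-dimensional, which severely constrains admissible coframes. Writing $f^i=u(e^i)$, the hypo conditions then require $f^{12}+f^{34}$, $f^{135}+f^{425}$ and $f^{145}+f^{235}$ to be closed under $d_{\lie{g}_0}$; expanding via the Chevalley--Eilenberg differential yields a tractable system of quadratic and cubic polynomial constraints on the entries of the matrix of $u$.

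Next, using Proposition~\ref{prop:SL2CAndSU2} to ensure that $(\omega_1,\psi_2,\psi_3)$ genuinely arises from an $\SU(2)$-frame, and aligning $f^5$ (hence $\alpha$) with the top of the descending central flag, I would reduce the unknowns to a handful of explicit scalars. The $\LieG{U}(2)$-action (an $S^1$-phase rotating $\omega_2,\omega_3$ together with an $\SU(2)$-rotation of the triple) is then used as a moving frame to fix a slice, which I claim coincides with the family $d_{\lambda,\mu,h,k}$. The isomorphism type of the underlying Lie algebra is recorded by the matrix of $\ad(e_5)$ on the derived subalgebra; inspection shows this matrix has maximal rank $3$, so the Lie algebra is filiform of dimension $5$, precisely when $\lambda\neq 0$ and $k\neq 0$. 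The sign $\lambda>0$ is then forced by the orientation condition $V^2(\omega_1,\psi_2)>0$ built into the definition of an $\SU(2)$-structure in Section~\ref{sec:SU2}; the complementary $\Z_2$ factor, which would flip this sign, has already been quotiented out.

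The closure statement is immediate: $\mathcal{M}_1/\LieG{U}(2)$ is a $\LieG{U}(2)$-invariant semi-algebraic set of the same dimension as $H_d$, and the degenerate limits $\lambda\to 0$ or $k\to 0$ exit the orbit of $\lie{g}_0$ because the limiting Lie algebra ceases to be filiform, so $\overline{H_d}=\mathcal{M}_1/\LieG{U}(2)$. I expect the main obstacle to be the middle step: verifying that the $\LieG{U}(2)$-slice really equals $\mathcal{M}_1$ and not a larger subvariety demands careful bookkeeping of the closedness equations, which is cleanest if one first exploits the intrinsic flag to eliminate most of the $\GL(5,\R)$-freedom (essentially pinning $e^5$ up to scale) before invoking the $\LieG{U}(2)$-action on the remaining transverse directions. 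Once the slice is established, matching parameter ranges and checking openness are routine.
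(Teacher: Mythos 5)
Your overall strategy---exploit the intrinsic flag of the filiform algebra, impose closedness of $\omega_1,\psi_2,\psi_3$, and cut a $\LieG{U}(2)$-slice---is the same as the paper's, but there is a genuine gap at the step you describe as ``aligning $f^5$ (hence $\alpha$) with the top of the descending central flag''. This is not an alignment you are free to make: the only group still available once the hypo conditions are imposed is $\LieG{U}(2)$, which fixes $e^5$, so the position of $\alpha$ relative to the flag is not a normalization but a \emph{consequence} of the hypo equations that must be proved. Moreover, the relevant subspace is not even in your flag: your flag has dimensions $(0,2,3,4,5)$, whereas $\alpha$ is forced into the canonical \emph{line} $V^1=\{\beta\mid\beta\wedge d\gamma=0 \text{ for all }\gamma\}$, which is strictly smaller than $\ker d$. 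The paper's proof of $e^5\in V^1$ is the one nontrivial step: $Z^3\wedge V^1$ is one-dimensional, so $\psi_2\wedge V^1$ and $\psi_3\wedge V^1$ are linearly dependent, and a nonzero combination $a\omega_2+b\omega_3$ is nondegenerate on $\ker\alpha$, which rules out $e^5\notin V^1$. Without this deduction the reduction to ``a handful of explicit scalars'' does not get started; a brute-force expansion of the closedness equations could in principle recover it, but your proposal presents it as a choice rather than a claim to be verified.

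A second, smaller error: the sign $\lambda>0$ is \emph{not} forced by the orientation condition of Proposition~\ref{prop:SL2CAndSU2}. Structures with $\lambda<0$ are perfectly good hypo structures; they are simply $\LieG{U}(2)$-equivalent to ones with $\lambda>0$ via the residual stabilizer of the slice, generated by $\diag(-1,-1,1,1,1)$ and $\diag(1,1,-1,-1,1)$, each of which sends $(\lambda,\mu,h,k)$ to $(-\lambda,\mu,-h,k)$. This mechanism also explains why the sign of $k$ \emph{cannot} be normalized---something an orientation argument would not predict, and which is essential to getting the slice right ($\lambda>0$, $k\neq0$ rather than $\lambda,k>0$). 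Finally, the closure claim needs more than ``same dimension'': it holds because every point of $\mathcal{M}_1$ is $\LieG{U}(2)$-equivalent to one with $\lambda\geq0$, so the image of $\{\lambda>0,\,k\neq0\}$ is dense in $\mathcal{M}_1/\LieG{U}(2)$, and $\mathcal{M}_1$ is closed.
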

\begin{proof}
As usual, we denote by $e^1,\dotsc, e^5$ the standard coframe on $\R^5$. By construction there is another coframe $\eta^1,\dotsc, \eta^5$ on $\R^5$ such that
\[d\eta^1=0, \quad d\eta^k=\eta^{1}\wedge\eta^{k-1}, k=2,\dotsc,5.\] If we set $V^k=\Span{\eta^1,\dotsc, \eta^k}$, it is not hard to check that the $V^i$ are independent of the choice of the coframe; for instance,
\[V^1=\{\beta\in(\R^5)^*\mid \beta\wedge d\gamma=0, \gamma\in (\R^5)^*\}.\]
The space of closed $3$-forms is given by
\begin{gather*}
Z^3=\Span{\eta^{123},
\eta^{124},
\eta^{125},
\eta^{134},
\eta^{135},
\eta^{145},
\eta^{234}}.
\end{gather*}
Now assume $d$ is hypo; we must determine $d$ up to $U(2)$ action. Arguing like in Proposition~7 of \cite{ContiFernandezSantisteban}, we can see that $e^5$ is in $V^1$. Indeed the space $Z^3\wedge V^1$ is one-dimensional, so $\psi_2\wedge V^1$ and $\psi_3\wedge V^1$ are linearly dependent, which is only possible if $e^5$ is in $V^1$.

In particular, $e^5$ is in $V^4$. Therefore, the span of $e^1,e^2,e^3,e^4$ intersects $V^4$ in a three-dimensional space;
up to an action of $\SU(2)$, we can assume that $e^1,e^2,e^3$ are in $V^4$ and $e^4$ is not. Thus $de^4=\gamma\wedge e^5$ and $\gamma$ is not in $V^3$. Then
\[0=d\omega_1=de^3\wedge e^4\mod \Lambda^3V^4,\]
so $de^3=0$. Up to the action of $U(2)$, we can rotate $e^1$ and $e^2$ and obtain that $e^1$ is in $V^3$ and $e^2$ is not.
Then
\[de^1=\lambda e^{35}, \quad de^2=he^{35}+ke^{15}, \quad k,\lambda\neq 0.\]
 Moreover
\[0=d\omega_1=de^{12}+e^{35}\wedge\gamma\]
implies that
\[\gamma=-\lambda e^2+h e^1+\mu e^3.\]
Thus $d$ has the form
\[(\lambda e^{35},he^{35}+ke^{15}, 0,(-\lambda e^2+he^1+\mu e^3)\wedge e^5,0), \quad \lambda,k\neq0.\]
The matrices in $\LieG{U}(2)$ that map one element of the above family to another element of the family constitute a group generated by
\begin{equation}
\label{eqn:subgroupU2}
\begin{pmatrix}-\id_2 \\& \id_2\\ &&1\end{pmatrix},\quad \begin{pmatrix}\id_2 \\& -\id_2\\ &&1\end{pmatrix}.
\end{equation}
The first two elements have the effect of changing the signs of $\lambda, h$, and the third element has no effect.
\end{proof}

\begin{lemma}
If $d$ is in the $\GL(5,\R)$-orbit of $(0,0,0,12,13+24)$, then
\[H_d = \{d_{x,y,h,k,\lambda,\mu}\in \mathcal{M}_2\mid h,\lambda>0\}\!/\raisebox{-.7ex}{\ensuremath{\LieG{U}(2)}};\]
in particular, the closure of $H_d$ is $\mathcal{M}_2/\LieG{U}(2)$.
\end{lemma}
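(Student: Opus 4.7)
The plan is to mirror the previous lemma: identify an intrinsic filtration of $(\R^5)^*$ determined by $d$, use the hypo closedness equations to pin down the adapted coframe relative to this filtration, and exhaust the remaining freedom via the $\LieG{U}(2)$-action.

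First I would identify intrinsic subspaces. For $d$ in the $\GL(5,\R)$-orbit of $(0,0,0,12,13+24)$, take $V^1=\ker d$, of dimension three (the annihilator of $[\lie{g},\lie{g}]$), and $V^2=\{\beta\in(\R^5)^*\mid d\beta\in\Lambda^2 V^1\}$, of dimension four (the annihilator of the next step of the lower central series). The image of $d$ has dimension two, and its intersection with $\Lambda^2 V^1$ is a distinguished line. These three subspaces are independent of the choice of representative coframe. Next I would locate the adapted coframe $e^1,\dotsc,e^5$: in analogy with the previous lemma, a count of closed $3$-forms and an inspection of $\psi_2\wedge V^1$ and $\psi_3\wedge V^1$ forces $e^5$ to lie in $V^2$ but not in $V^1$. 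Using closure of $\omega_1=e^{12}+e^{34}$ together with the $\LieG{U}(2)$-rotations preserving it, I can then arrange that two of $e^1,e^2,e^3,e^4$ lie in $V^1$ and the remaining two are placed canonically relative to $V^1$ and the distinguished line in $\Lambda^2 V^1\cap\operatorname{im}d$. Writing $de^i$ in this normalized coframe and imposing the remaining closedness conditions $d\psi_2=0=d\psi_3$ should leave exactly the six free parameters $(x,y,h,k,\lambda,\mu)$ of the definition of $\mathcal{M}_2$.

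The third step is to match the Jacobi condition $\hat d\circ d=0$ with the stated rank inequality and to identify the residual discrete action. A direct computation of $\hat d\circ d$ on each $e^i$ in the normalized form yields expressions whose vanishing is equivalent to the proportionality of the two rows of the $2\times 4$ matrix in the definition of $\mathcal{M}_2$, i.e.\ to $\rk<2$. The open conditions $h>0$ and $\lambda>0$ come from orientation together with the stability requirement $V^2(\omega_1,\psi_2)>0$ of Proposition~\ref{prop:SL2CAndSU2}: they amount to sign choices that cannot be absorbed by the remaining $\LieG{U}(2)$-action. The stabilizer of the family $\mathcal{M}_2$ inside $\LieG{U}(2)$ should, as in the $\mathcal{M}_1$ case (see \eqref{eqn:subgroupU2}), be a finite group acting on $(x,y,h,k,\lambda,\mu)$ only by sign flips that do not affect $h$ or $\lambda$.

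The main obstacle is the richer filtration: since $\dim V^1=3$ rather than $1$, there is substantially more freedom in placing $e^1,e^2,e^3,e^4$ after fixing $e^5$, and the interplay of $d\omega_1=0$, $d\psi_2=0$ and $d\psi_3=0$ with the $\LieG{U}(2)$-action that mixes these vectors requires careful bookkeeping. Producing the six-parameter normal form and verifying that the Jacobi identity collapses exactly to the $2\times 4$ rank condition is the technical heart of the argument; once this is done, the closure assertion $\overline{H_d}=\mathcal{M}_2/\LieG{U}(2)$ follows by continuity from the description of the open locus cut out by $h,\lambda>0$.
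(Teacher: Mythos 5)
Your normalization of the coframe goes wrong at the decisive step, and in a way that would derail the rest of the argument. You assert, ``in analogy with the previous lemma,'' that $e^5$ must lie in the four-dimensional canonical subspace (your $V^2$, which is $\{\beta\mid(d\beta)^2=0\}$, the annihilator of the second step of the lower central series) but not in $\ker d$. The paper's proof establishes exactly the opposite: the case $e^5\in V^4$ is case \emph{a}), which is shown to be \emph{impossible} (it forces $e^{15},e^{25}$ closed, then $e^5\in\ker d$, then $de^2$ proportional to $e^{15}$ and finally $e^3\wedge de^4=0$, a contradiction). In the correct normal form one has instead that $e^4+ze^5$ lies in $V^4$ for some \emph{nonzero} $z$, and indeed in the resulting family $de^5$ contains the term $-h(e^{14}-e^{23})$ with $h\neq0$, so $(de^5)^2\neq0$ and $e^5\notin V^4$. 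The analogy with the $(0,0,12,13,14)$ case fails precisely because there $\ker d$ is one-dimensional and the pairing of $Z^3$ with it pins down $e^5$; here $\ker d$ is three-dimensional and no such forcing occurs. Moreover, the parameter $z$ is not incidental: the paper uses the extra $\R^*$-symmetry of the hypo equations (beyond $\LieG{U}(2)$) to set $z=1$, carries out the Jacobi analysis there (which forces $x=y=0$ in that gauge, leaving a small family), and only recovers the six-parameter family $\mathcal{M}_2$ with its rank condition by reinstating $z$ and taking the closure via $\lambda=xz$, $h=x/z$, etc. So the rank-$<2$ condition is not simply ``the Jacobi identity in the normalized coframe''; it encodes the closure of a $z$-deformed family.

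A second, smaller error: the inequalities $h,\lambda>0$ do not come from orientation or from the stability condition $V^2(\omega_1,\psi_2)>0$. They arise because (i) $h,\lambda\neq0$ is exactly the condition for $d_{x,y,h,k,\lambda,\mu}$ to lie in the $\GL(5,\R)$-orbit of $(0,0,0,12,13+24)$ rather than a degeneration of it, and (ii) the residual finite subgroup of $\LieG{U}(2)$ preserving the family --- generated by the matrices in \eqref{eqn:subgroupU2} --- flips the signs of $(x,h,\lambda)$ and $(y,h,\lambda)$. Contrary to your claim that this group ``does not affect $h$ or $\lambda$,'' it is precisely because it \emph{does} flip their signs that one may normalize both to be positive.
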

\begin{proof}
Suppose $d$ is hypo. We define $k$-dimensional subspaces $V^k\subset(\R^5)^*$ by
\[V^4=\{\beta\in (\R^5)^*\mid (d\beta)^2=0\}, \quad V^3=\ker d, \quad V^2=\{\beta\in(\R^5)^*\mid \beta\wedge d(V^4)=0\}.\]
If $e^1,e^2,e^3,e^4$ are in $V^4$, then all the $\omega_i$ are closed, and their restrictions to the non-abelian subalgebra $\ker e^5$ determine a hyperk\"ahler structure, which is absurd.

So up to $\LieG{U}(2)$ action we can assume that $V^4$ contains $e^1,e^2,e^3$ but not $e^4$. Then
\[d\omega_1=de^3\wedge e^4\mod \Lambda^3V^4,\]
so $e^3$ is closed. Moreover up to $\LieG{U}(2)$ action we can assume that  $e^1$ is in $V^3$. There are two cases.

\emph{a}) Suppose $e^5$ is in $V^4$. Then
\[d\psi_2=de^{25}\wedge e^{4}\mod \Lambda^4V^4,\quad d\psi_3=-de^{15}\wedge e^{4}\mod \Lambda^4V^4,\]
so $e^{15}$ and $e^{25}$ are closed; since $e^3$ is also closed,
\[de^4\wedge e^{25}=0=de^4\wedge e^{15},\]
hence
\begin{equation}
\label{eqn:de4}
de^4\in\Span{e^{12},e^{15},e^{25},e^{35}}.
\end{equation}
Moreover
\[0=d\omega_1=e^1\wedge de^2+e^3\wedge de^4.\]
It follows that $e^5$ is in $V^3$: otherwise some linear combination $e^2+ae^5$ is in $V^3$, hence $de^2=-ade^5$; this is absurd by
\[0\neq de^4\wedge e^3=-de^2\wedge e^1=ade^5\wedge e^1=0.\]
So $V^3$ is spanned by $e^1,e^3,e^5$; since $e^{25}$ is closed, $de^2$ is a linear combination of $e^{15}$ and $e^{35}$. By \eqref{eqn:de4} and $(de^4)^2\neq0$, $de^4$ has a component along $e^{12}$; since by construction $de^2\wedge de^4=0$, it follows that $de^2$ is a multiple of $e^{15}$. But then $e^3\wedge de^4=0$, which is absurd.

\emph{b}) Suppose $e^5$ is not in $V^4$; then $e^4+ze^5$ is in $V^4$ for some nonzero $z$. Using the full group of symmetries of the hypo equations, we can rescale $e^5$, so that $z=1$. For brevity, we shall write $e^{4+5}$ for $e^4+e^5$. Then
\[
0=d\psi_2=d((e^{13}+e^{4+5,2})\wedge e^5)= de^{4+5,2}\wedge e^5 \mod \Lambda^4V^4.\]
Closedness of $e^{2,4+5}$ and the fact that $e^1,e^3$ are in $V^3$ imply that
$de^2,de^{4+5}$ are in
\[\Span{e^{12},e^{1,4+5},e^{23},e^{3,4+5}},\]
and more precisely they have the form
\[de^2=xe^{12}+ye^{1,4+5}+he^{23}+ke^{3,4+5}, \quad
de^{4+5}=a e^{12}- x e^{1,4+5}+be^{23}+he^{3,4+5}.\]
Moreover by $d(e^{1,4+5}+e^{23})=0$ we see that $h=-y$, $b=x$, so
\[de^2=xe^{12}+ye^{1,4+5}-ye^{23}+ke^{3,4+5}\]
\[de^{4+5}=a e^{12}- xe^{1,4+5}+xe^{23}-y e^{3,4+5}.\]
We know that $de^2$ and $de^{4+5}$ are linearly dependent; by writing components and computing determinants, we deduce
\begin{equation}
 \label{eqn:dependence}
 x^2=-ay, \quad xy=-ak, \quad y^2=kx.
\end{equation}
On the other hand
\begin{equation}
\label{eqn:boh}
0=e^1\wedge de^2+e^3\wedge de^4=-ye^{123}+ke^{13,4+5}+a e^{123}+ x e^{13,4+5}-e^3\wedge de^5,
\end{equation}
whence
\[e^{13}\wedge de^5=0.\]
Now
\begin{align*}
0&=d\psi_2=(e^{13}+e^{4+5,2})\wedge de^5=e^{4+5,2}\wedge de^5,\\
0&=d\psi_3=(e^{1,4+5}+e^{23})\wedge de^5.
\end{align*}
Therefore
\[de^5\in\Span{e^{12},e^{1,4+5}-e^{23},e^{3,4+5}},\]
and more precisely, using \eqref{eqn:boh}
\[de^5=(a-y)e^{12}-(k+x)(e^{1,4+5}-e^{23}) + \lambda e^{3,4+5}.\]
Then
\[d^2e^5=((a-y)y+2x(k+x)-\lambda a)e^{123} + (-(a-y)k-\lambda x-2y(k+x)) e^{13(4+5)}.\]
whence
\[\lambda a = ay-y^2+2kx+2x^2, \quad \lambda x= -ak +ky-2ky-2xy.\]
Using \eqref{eqn:dependence},
we get
\[\lambda a = y^2+x^2, \quad \lambda x= -xy -ky\]

We claim that $x$ and $y$ are zero. Indeed, suppose otherwise; by \eqref{eqn:dependence}, both $x$ and $y$ are nonzero and
\[a=-\frac{x^2}y, \quad k=\frac{y^2}x.\]
This implies that $(de^5)^2$ is zero, which is absurd.

So $x=y=0$. Then $ak=0=a\lambda$. Then  $(de^5)^2\neq0$ implies $k\neq0$, so $a=0$ and
\[de^2=ke^3(e^4+e^5), \quad de^{4+5}=0, \quad de^5=-k(e^{1(4+5)}-e^{23}) + \lambda e^{3(4+5)}.\]
These equations were obtained by declaring $z=1$. In the general case (changing names to the variables),
\[d_{x,y,z}=\left(0, xe^3(e^4+ze^5),0,(xe^{1}- ye^{3})(e^4+ze^5)-xe^{23}, (-xe^{1}+ ye^{3})(\frac1ze^4+e^5)+\frac{x}{z}e^{23}\right).\]
This is not a closed set in $\mathcal{D}$. To compute the closure, we set
\[\lambda = xz, \quad \mu=yz, \quad h=\frac xz, \quad k=\frac yz;\]
these variables must satisfy
\[\operatorname{rk}\begin{pmatrix} \lambda& \mu & x& y\\ x&y&h&k\end{pmatrix}<2.\]
Then $d$ has the form \eqref{eqn:elementofM2}; the corresponding Lie algebra is isomorphic to $(0,0,0,12,13+24)$ if and only if both $h$ and $\lambda$ are nonzero.

Like in Lemma~\ref{lemma:family1}, the elements of $\LieG{U}(2)$ that map $d$ to an element of the same family \eqref{eqn:elementofM2} constitute a group generated by the matrices \eqref{eqn:subgroupU2}, which have the effect of changing the signs of $(x,h,\lambda)$ and $(y,h,\lambda)$ respectively; it follows that $h$ and $\lambda$ can be assumed to be positive.
\end{proof}

\begin{lemma}
If $d$ in $\mathcal{D}$ is in the $\GL(5,\R)$-orbit of $(0,0,0,0,12+34)$, then
\[H_d = \left\{d_{\lambda,\mu}\in\mathcal{M}_3\mid \lambda,\mu\in\R, 0\leq\mu\neq\abs{\lambda}\right\}\!/\raisebox{-.7ex}{\ensuremath{\LieG{U}(2)}}\;\]
in particular the closure of $H_d$ is $\mathcal{M}_3/\LieG{U}(2)$, and
\[\left\{d_{\lambda,\mu}\in\mathcal{M}_3\mid \mu^2-\lambda^2>0\right\}/\raisebox{-.7ex}{\ensuremath{\LieG{U}(2)}}\subset\mathcal{M}_2/\raisebox{-.7ex}{\ensuremath{\LieG{U}(2)}}.\]
\end{lemma}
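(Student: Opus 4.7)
The plan is to classify the hypo elements in the $\GL(5,\R)$-orbit of $(0,0,0,0,12+34)$ up to the $\LieG{U}(2)$-action, then read off the closure and the inclusion into $\mathcal{M}_2/\LieG{U}(2)$.

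As the easy direction I would first verify by direct computation that each $d_{\lambda,\mu}\in\mathcal{M}_3$ is hypo: since $de^i=0$ for $i\leq 4$, the identity $d\omega_1=0$ is automatic, while $d\psi_k=-\omega_k\wedge de^5=0$ for $k=2,3$ because $de^5=\lambda\omega_1+\mu(e^{12}-e^{34})$ is a sum of an element of $\Lambda^2_+V^4$ proportional to $\omega_1$ and an element of $\Lambda^2_-V^4$, both of which are orthogonal to $\omega_2,\omega_3$ in the wedge pairing $\Lambda^2 V^4\wedge\Lambda^2 V^4\to\Lambda^4 V^4$, where $V^4=\Span{e^1,\dots,e^4}$.

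For the converse, suppose $d\in\mathcal{D}$ is hypo and lies in the orbit of $(0,0,0,0,12+34)$. Then $\rk d=1$, so $de^i=c_i\Omega$ for a single 2-form $\Omega$ of rank 4 and scalars $c_i$. The hypo condition $d(e^{12}+e^{34})=0$ unfolds to $\Omega\wedge(c_1e^2-c_2e^1+c_3e^4-c_4e^3)=0$, and the map $\gamma\mapsto\Omega\wedge\gamma$ from $(\R^5)^*$ to $\Lambda^3(\R^5)^*$ is injective (as one checks on the standard model $e^{12}+e^{34}$, to which every rank-4 2-form is brought by some $g\in\GL(5,\R)$), forcing $c_1=c_2=c_3=c_4=0$. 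Rescaling $e^5$ one may take $c_5=1$, and then $d^2=0$ forces $\Omega\in\Lambda^2 V^4$ with $V^4=\Span{e^1,\dots,e^4}$. The remaining hypo equations become $\omega_2\wedge\Omega=0=\omega_3\wedge\Omega$ in $\Lambda^4 V^4\cong\R$; decomposing $\Omega=\sum\lambda_i\omega_i+Q$ with $Q\in\Lambda^2_- V^4$, the orthogonality relations $\omega_i\wedge\omega_j=\delta_{ij}\omega_1^2$ and $\omega_i\wedge(\Lambda^2_- V^4)=0$ force $\lambda_2=\lambda_3=0$, so $\Omega=\lambda\omega_1+Q$.

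The $\LieG{U}(2)$-action fixes $e^5$, preserves $\omega_1$, and acts on $\Lambda^2_- V^4\cong\R^3$ as $\SO(3)$; hence we may rotate $Q$ into the form $\mu(e^{12}-e^{34})$ with $\mu=|Q|\geq 0$. The rank-4 condition $\Omega^2\neq 0$, using $\Omega^2=2(\lambda^2-\mu^2)e^{1234}$, translates to $\mu\neq|\lambda|$, establishing the formula for $H_d$; the closure in $\mathcal{M}_3/\LieG{U}(2)$ amounts to allowing $\mu=|\lambda|$ (or $\mu=0$), which only adds degenerate isomorphism classes of Lie algebras. For the final inclusion, when $\mu^2>\lambda^2$ I would apply a further $\SO(3)$-rotation on $\Lambda^2_- V^4$ to bring $\mu(e^{12}-e^{34})$ into $-\lambda(e^{12}-e^{34})-h(e^{14}-e^{23})$ with $h=\sqrt{\mu^2-\lambda^2}$; this is length-preserving because $\lambda^2+h^2=\mu^2$, and yields $de^5=2\lambda e^{34}-h(e^{14}-e^{23})$, which matches the defining formula \eqref{eqn:elementofM2} of $\mathcal{M}_2$ with $x=y=\lambda'=\mu'=0$ and $k=2\lambda$; the rank inequality on the associated $2\times 4$ matrix then holds trivially. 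The main technical step in the whole proof is the injectivity-of-wedge argument that kills $c_1,\dots,c_4$, after which everything reduces to linear algebra inside $\Lambda^2 V^4$ combined with the standard $\SO(3)$-action on $\Lambda^2_-\R^4$.
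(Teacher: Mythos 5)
Your proof is correct and follows essentially the same route as the paper: both reduce to showing $e^1,\dots,e^4$ are closed and $de^5=\lambda\omega_1+Q$ with $Q\in\Lambda^2_-$, then use the $\SO(3)$-action of $\LieG{U}(2)$ on $\Lambda^2_-$ to normalize $Q$, the condition $(de^5)^2\neq0$ to get $\mu\neq\abs{\lambda}$, and the same explicit rotation for the inclusion into $\mathcal{M}_2$. The only (harmless) difference is at the first step, where the paper computes the spaces of closed forms in the normal-form coframe while you argue via the rank-one structure of $d$ and the injectivity of wedging with a rank-$4$ two-form.
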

\begin{proof}
In terms of a coframe $\eta^1,\dotsc,\eta^5$ that makes $d$ into $(0,0,0,0,12+34)$, we compute
\begin{gather*}
Z^2=\Lambda^2\Span{\eta^1,\eta^2,\eta^3,\eta^4},\\
Z^3=\Lambda^3\Span{\eta^1,\eta^2,\eta^3,\eta^4}\oplus \eta^5\wedge\Span{\eta^{12}-\eta^{34},\eta^{13},\eta^{14},\eta^{23},\eta^{24}}.
\end{gather*}
Thus $\eta^1,\dotsc, \eta^4$ and $e^1,\dotsc, e^4$ span the same space; in particular $e^1,\dotsc, e^4$ are closed and $de^5$ is in $\Lambda^2\Span{e^1,\dotsc,e^4}$. Then
\[0=d(e^5\wedge\omega_2)=de^5\wedge\omega_2,\]
and similarly $de^5\wedge\omega_3$ is zero. Then
\[de^5\in \Span{e^{12}+e^{34}}\oplus\Lambda^2_-, \quad \Lambda^2_-=\Span{e^{12}-e^{34},e^{13}-e^{42},e^{14}-e^{23}}.\]
Since $\SU(2)$ acts transitively on the sphere in $\Lambda^2_-$, we can assume that
$de^5$ is $\lambda\omega_1+\mu (e^{12}-e^{34})$, with $\mu$ non-negative. Moreover $\mu^2-\lambda^2$ cannot be zero, for otherwise $(de^5)^2$ is zero.

The last part of the statement follows from the fact that when $\mu^2-\lambda^2>0$, $\lambda\omega_1+\mu (e^{12}-e^{34})$ is in the same $\LieG{U}(2)$-orbit as
\[\lambda(e^{12}+e^{34})-\sqrt{\mu^2-\lambda^2}(e^{14}-e^{23})-\lambda(e^{12}-e^{34}).\qedhere\]
\end{proof}

\begin{lemma}
Let $d$  be in the $\GL(5,\R)$-orbit of $(0,0,0,0,12)$ in $\mathcal{D}$. Then
\[H_d = \{d_{y,k,\mu}\mid y^2=k\mu, (k,\mu)\neq(0,0)\}\!/\raisebox{-.7ex}{\ensuremath{\LieG{U}(2)}},\]
where
\[d_{y,k,\mu}=\bigl(0, 0,0, -ye^{34}-\mu e^{35},ke^{34}+y e^{35}\bigr).\]
In particular $H_d\subset\mathcal{M}_2$.
\end{lemma}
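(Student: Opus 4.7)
My plan is to follow the template of the three preceding lemmas: exploit the $\GL(5,\R)$-invariants of $d$, translate the hypo conditions into polynomial equations, and use the residual $\LieG{U}(2)$-action to bring $d$ into the announced family. The reverse inclusion reduces to verifying directly that every $d_{y,k,\mu}$ with $y^2=k\mu$ and $(k,\mu)\neq(0,0)$ lies in the $\GL(5,\R)$-orbit of $(0,0,0,0,12)$ (since its image is one-dimensional and spanned by a decomposable $2$-form, so the underlying Lie algebra is two-step nilpotent with one-dimensional derived subalgebra) and is hypo (a direct Cartan calculation).

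For $d$ in the orbit of $(0,0,0,0,12)$ the derivation $d\colon(\R^5)^*\to\Lambda^2(\R^5)^*$ has four-dimensional kernel and one-dimensional image spanned by a decomposable $2$-form $\xi$. Writing $de^i=c_i\xi$ for a vector $c=(c_1,\dots,c_5)\in\R^5$, I would first use the $\SU(2)$-action on $\C^2=\Span{e^1,\dots,e^4}$, which is transitive on nonzero vectors, to reduce $(c_1,c_2,c_3,c_4)$ to the form $(0,0,0,-a)$ with $a\geq0$.

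The three hypo conditions $d\omega_1=d\psi_2=d\psi_3=0$ each become $\xi\wedge(\text{an explicit form})=0$; in particular $d\omega_1=0$ reads $\xi\wedge(c_1 e^2-c_2 e^1+c_3 e^4-c_4 e^3)=0$. When $a>0$ this forces $\xi=e^3\wedge\zeta$ for some $\zeta\in\Span{e^1,e^2,e^4,e^5}$; substituting this into the remaining two conditions and tracking coefficients pins $\zeta$ down to $\Span{e^4,e^5}$ subject to the single linear relation $a\zeta_4=c_5\zeta_5$. Setting $y=a\zeta_4=c_5\zeta_5$, $k=c_5\zeta_4$, $\mu=a\zeta_5$ then identifies $d$ with $d_{y,k,\mu}$, with the relation $y^2=k\mu$ automatic and $(k,\mu)\neq(0,0)$ since $\xi\neq0$. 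When $a=0$, the hypo conditions become linear equations in the components of $\xi$ alone and confine $\xi$ to $\R\omega_1\oplus\Lambda^2_-$; decomposability balances the self-dual and anti-self-dual norms of $\xi$, and rotating the anti-self-dual part onto $e^{12}-e^{34}$ by $\SU(2)$---followed, if necessary, by the $\LieG{U}(2)$-swap $(e^1,e^2)\leftrightarrow(e^3,e^4)$---reduces $\xi$ to a multiple of $e^{34}$, giving $d=d_{0,k,0}$ with $k\neq0$.

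The containment $H_d\subset\mathcal{M}_2$ is then immediate, as $d_{y,k,\mu}$ is precisely the element of $\mathcal{M}_2$ with parameters $(x,y,h,k,\lambda,\mu)=(0,y,0,k,0,\mu)$, and the rank condition defining $\mathcal{M}_2$ collapses to $y^2=k\mu$. I anticipate the main obstacle to be the sign bookkeeping in the wedge-product computations used to expand the last two hypo conditions when $a>0$: the crux is to verify that the two resulting linear constraints on $\zeta_4,\zeta_5$ coincide, rather than being incompatible for $c_5\neq0$; once this is established, everything else is routine.
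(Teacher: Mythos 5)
Your proposal is correct and follows essentially the same route as the paper: normalize by $\SU(2)$ so that $e^1,e^2,e^3$ are closed, expand the three hypo conditions, and use the residual $\LieG{U}(2)$-symmetry, with your case split on $a=0$ versus $a>0$ merely reorganizing the paper's split on whether $e^5\in\ker d$. The one point you flagged does work out: $d\psi_2=0$ and $d\psi_3=0$ reduce to the \emph{same} relation $a\zeta_4=c_5\zeta_5$ (read off from the $e^{2345}$-coefficient of the former and the $e^{1345}$-coefficient of the latter), so the two constraints are compatible and the rest goes through as you describe.
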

\begin{proof}
Suppose $d$ is hypo. There is a well defined filtration $V^2\subset V^4\subset(\R^5)^*$, where $\Lambda^2V^2$ is spanned by an exact two-form, and $V^4=\ker d$. Up to $\SU(2)$ action, we can assume that $e^1,e^2,e^3$ are in $V^4$.

There are two cases to consider.

\emph{a}) If $e^5$ is in $V^4$, then $e^4$ is not in $V^4$, so $d\omega_1=0$ implies that $e^3$ is in $V^2$. Closedness of $\psi_2$, $\psi_3$ gives
\[ e^{25}\wedge de^4=0= e^{15}\wedge de^4,\]
so $V^2$ is spanned by $e^3,e^5$. Thus $d=d_{0,0,\mu}$, with $\mu$ nonzero.

\emph{b}) If $e^5$ is not in $V^4$, then $e^4-ae^5$ is in $V^4$ for some $a\in\R$. Then
\begin{align*}
e^{135}+e^{425} &= e^{13}\wedge e^5 + (e^4-ae^5)\wedge e^{25},\\
e^{145}+e^{235} &= e^{23}\wedge e^5 - (e^4-ae^5)\wedge e^{15}.
\end{align*}
Thus
\begin{equation}
 \label{eqn:de5wedge}
de^5\wedge (e^{13}+(e^4-ae^5)\wedge e^2)=0 = de^5\wedge (e^{23}+e^1\wedge(e^4-ae^5)).
\end{equation}
We can also assume that $e^3$ is in $V^2$. Indeed if $e^4$ is in $V^4$, it suffices to act by an element of $\SU(2)$ to obtain $e^3\in V^2$. If on the other hand $e^4$ is not in $V^4$, then $d\omega_1=0$ implies $e^3\in V^2$. It follows that $de^5$ is a linear combination of $e^{13},e^{34}-ae^{35},e^{23}$. By \eqref{eqn:de5wedge},
\[de^5=k(e^{34}-ae^{35}),\quad  de^4=ak(e^{34}-ae^{35}),\]
so $d$ has the form $d_{y,k,\mu}$.
\end{proof}

\begin{lemma}
If $d$ is in the $\GL(5,\R)$-orbit of $(0,0,0,12,13)$, then
\[H_d = \{d_{k,\mu}\mid k\geq\mu, k,\mu\neq0\}\!/\raisebox{-.7ex}{\ensuremath{\LieG{U}(2)}},\]
where
\[d_{k,\mu}=(0,k e^{15}, 0, \mu e^{35},0).\]
In particular $H_d\subset\mathcal{M}_1$.
\end{lemma}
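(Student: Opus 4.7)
The proof will parallel Lemma~\ref{lemma:family1}. Fix a coframe $\eta^1,\dotsc,\eta^5$ on $(\R^5)^*$ realising the structure $(0,0,0,12,13)$, so that $d\eta^i=0$ for $i\leq3$, $d\eta^4=\eta^{12}$, $d\eta^5=\eta^{13}$. Introduce the intrinsic subspaces
\[V^3=\ker d=\Span{\eta^1,\eta^2,\eta^3},\qquad V^1=\{\gamma\in(\R^5)^*\mid \gamma\wedge d\delta=0\ \forall\delta\}=\Span{\eta^1},\]
both preserved by every automorphism. A direct calculation gives $\mathrm{Im}(d)=\Span{\eta^{12},\eta^{13}}\subset V^1\wedge V^3$, and shows that the closed $3$-forms form the $8$-dimensional space $Z^3=\Span{\eta^{ijk}\mid (i,j,k)\notin\{(2,4,5),(3,4,5)\}}$.

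The crucial first step -- which I expect to be the main obstacle -- is to show that for any hypo $\SU(2)$-structure $(\alpha,\omega_i)$ with adapted coframe $e^1,\dotsc,e^5$, the form $\alpha=e^5$ must lie in $V^1$. Unlike in Lemma~\ref{lemma:family1}, the space $Z^3\wedge V^1=\Span{\eta^{1234},\eta^{1235}}$ is $2$-dimensional, so one cannot argue by a direct linear-dependence of $\psi_2\wedge V^1$ and $\psi_3\wedge V^1$. The plan is to combine the inclusion $d\alpha\in\mathrm{Im}(d)\subset V^1\wedge V^3$ with the hypo structure equation \eqref{eqn:hypoIT}, $d\alpha=\alpha\wedge\beta+f\omega_1+\omega^-$: after subtracting $\alpha\wedge\beta$, the rank-$4$ form $f\omega_1+\omega^-$ on $\ker\alpha$ must lie in $\Span{\eta^{12},\eta^{13}}$ modulo $\alpha\wedge(\R^5)^*$, and a case analysis of this algebraic constraint, together with the conditions $(X_{\omega_1}\hook\psi_2)\wedge\psi_3=0$ and $\omega_1\wedge\psi_i=0$ from Proposition~\ref{prop:SL2CAndSU2}, forces $\alpha$ to align with the radical $V^1$ of $\mathrm{Im}(d)$.

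Once $\alpha\in V^1$, the $\SU(2)$-action on $\ker\alpha=\Span{e^1,e^2,e^3,e^4}$ is used to arrange $e^1,e^3\in V^3$ and $e^2,e^4\notin V^3$; consequently $de^1=de^3=de^5=0$ and $de^2,de^4\in\mathrm{Im}(d)=\Span{e^{15},e^{35}}$. Writing
\[de^2=k_{11}e^{15}+k_{12}e^{35},\qquad de^4=k_{21}e^{15}+k_{22}e^{35},\]
the closedness relation $d\omega_1=-e^1\wedge de^2-e^3\wedge de^4=0$ reduces to $k_{12}=k_{21}$, so the matrix $K=(k_{ij})$ is symmetric.

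The residual symmetry preserving the constraint $\Span{e^1,e^3}\subset V^3$ is an $\mathrm{SO}(2)$ rotating $(e^1,e^3)$ and $(e^2,e^4)$ by the same angle, together with the Weyl element swapping $(e^1,e^2)\leftrightarrow(e^3,e^4)$. The continuous part diagonalises $K$ by orthogonal conjugation to $\diag(k,\mu)$, yielding exactly the form $d_{k,\mu}$; the swap exchanges $k\leftrightarrow\mu$, so one may assume $k\geq\mu$. Non-degeneracy (so that the underlying Lie algebra remains $(0,0,0,12,13)$ rather than collapsing to $(0,0,0,0,12)$) requires $k\mu\neq0$. The converse inclusion is an immediate verification that each $d_{k,\mu}$ satisfies the hypo equations, and $H_d\subset\mathcal{M}_1$ is exhibited by setting $\lambda=h=0$ in the parametrisation of $\mathcal{M}_1$.
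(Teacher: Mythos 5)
The second half of your argument (once $\alpha=e^5$ is known to lie in $V^1=\Span{\eta^1}$) matches the paper: arrange $e^1,e^3\in V^3=\ker d$, observe that closedness of $\omega_1$ makes the coefficient matrix symmetric, diagonalise it with the residual circle in $\LieG{U}(2)$ and use the swap to impose $k\geq\mu$. But the step you yourself flag as ``the main obstacle'' --- proving that $\alpha$ must lie in $V^1$ --- is not actually carried out, and the strategy you announce for it looks insufficient. You propose to deduce it from $d\alpha\in\im(d)\subset V^1\wedge V^3$ together with the structure equation $d\alpha=\alpha\wedge\beta+f\omega_1+\omega^-$ of \eqref{eqn:hypoIT} and the pointwise compatibility conditions of Proposition~\ref{prop:SL2CAndSU2}. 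These ingredients do not see the closedness of $\psi_2$ and $\psi_3$, and that is exactly what is needed: for instance, with $\alpha=\eta^4$ one has $d\alpha=\eta^{12}$, which is decomposable and (taking $\beta=0$) is automatically orthogonal to any $\omega_2,\omega_3$ whose $\psi_i=\omega_i\wedge\eta^4$ lie in $Z^3$, so the first structure equation raises no obstruction; what actually kills this case is that $Z^3\cap(\eta^4\wedge\Lambda^2)=\eta^4\wedge\Span{\eta^{12},\eta^{13},\eta^{23},\eta^{15}}$ cannot contain two forms $\psi_2,\psi_3$ with $\omega_2\wedge\omega_3=0$ and $\omega_2^2=\omega_3^2\neq0$ on $\ker\alpha$. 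In other words, the constraint comes from where $\psi_2,\psi_3$ are allowed to sit inside $Z^3$, not from the shape of $d\alpha$.

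The paper closes this gap with two specific devices that your sketch omits: first, the criterion from \cite{ContiFernandezSantisteban} that $e^5(X)=0$ whenever $\dim\bigl((X\hook Z^3)\wedge\beta\bigr)<2$, applied to $X=\eta_4,\eta_5$ and $\beta=\eta^1$, which forces $e^5\in\Span{\eta^1,\eta^2,\eta^3}$; and second, the dimension count $\eta^1\wedge e^5\wedge\Span{\omega_1,\omega_2,\omega_3}$ being $3$\dash dimensional versus $\eta^1\wedge Z^3=\Span{\eta^{1234},\eta^{1235}}$ being $2$\dash dimensional, which forces $e^5$ to be proportional to $\eta^1$. Both steps use $\psi_2,\psi_3\in Z^3$ in an essential way. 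A secondary, smaller gap: you assert that the $\SU(2)$-action lets you place $e^1$ \emph{and} $e^3$ in $V^3$, but $\SU(2)$ does not act transitively on $2$-planes in $\R^4$; the paper first moves $e^1$ into $V^3$, then uses $d\omega_1=0$ to show $\Span{e^3,e^4}$ must meet $V^3$, and only then uses the extra circle of $\LieG{U}(2)$ to put $e^3$ there. You should supply both missing arguments (or replace them by a worked-out version of your case analysis that explicitly uses $\psi_2,\psi_3\in Z^3$) before the proof is complete.
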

\begin{proof}
In terms of a coframe $\eta^1,\dotsc,\eta^5$ which makes $d$ into $(0,0,0,12,13)$, we compute
\begin{align*}
Z^2&=\Span{\eta^{12},\eta^{13},\eta^{14},\eta^{15},\eta^{23},\eta^{24},\eta^{25}+\eta^{34},\eta^{35}},\\
Z^3&=\Span{\eta^{123},\eta^{124},\eta^{125},\eta^{134},\eta^{135},\eta^{145},\eta^{234},\eta^{235}}.
\end{align*}
Suppose $d$ is hypo. Recall from \cite{ContiFernandezSantisteban} that if $X\in\R^5$, $\beta\in(\R^5)^*$ are such that
\[\dim(X\hook Z^3)\wedge\beta<2,\]
then $e^5(X)$ is zero. In this case, if $\eta_1,\dotsc, \eta_5$ is the frame dual to $\eta^1,\dotsc, \eta^5$,
\[\dim (\eta_4\hook Z^3)\wedge \eta^1=1=\dim (\eta_5\hook Z^3)\wedge \eta^1;\]
so $e^5$ is a linear combination of $\eta^1,\eta^2,\eta^3$.

Suppose $e^5$ is linearly independent of $\eta^1$. Then
\[\dim \eta^1\wedge e^5\wedge \Span{\omega_1, \omega_2, \omega_3}=3;\]
on the other hand, if $e^5=a\eta^2+b\eta^3 \pmod{\eta^1}$,
\[\eta^1\wedge e^5\wedge Z^2 \subset\Span{\eta^{1234},\eta^{1235}}=\eta^1\wedge Z^3,\]
which is absurd. So $e^5$ is linearly dependent of $\eta^1$.

Let $V^3=\ker d$. Then $\Span{e^1,e^2,e^3,e^4}$ intersects $V^3$ in a two-dimensional space; up to $\SU(2)$ action, we can assume $V^3$ contains $e^1$. Suppose that $\Span{e^3,e^4}$ has trivial intersection with $V^3$. Then $de^3$ and $de^4$ are non-zero, contradicting
\[de^3\wedge e^4-e^3\wedge de^4=de^{34}=-de^{12}\in\Lambda^3V^3.\]
Thus, $\Span{e^3,e^4}$ intersects $V^3$ in a one-dimensional space; up to $\LieG{U}(2)$ action, we can assume that $e^3$ is in $V^3$. Therefore
\[de^2 = ae^{15}+be^{35}, \quad de^4=he^{15}+ke^{35};\]
using the fact that $\omega_1$ is closed, we see that $h=b$ and obtain
\[\begin{pmatrix}de^2\\ de^4\end{pmatrix}= \begin{pmatrix} a &  b\\ b & k\end{pmatrix}\begin{pmatrix}e^{15}\\ e^{35}\end{pmatrix}.\]
This matrix can be assumed to be diagonal, because the symmetry group $\LieG{U}(2)$ contains an $S^1$ that rotates $\Span{e^1,e^3}$ and $\Span{e^2,e^4}$, whose action corresponds to conjugating the matrix by $\SO(2)$. So we obtain $d_{k,\mu}$ as appears in the statement, where $k$ and $\mu$ can be interchanged by the same circle action.
\end{proof}

Theorem~\ref{thm:hierarchy} now follows trivially from the lemmas.
\section{Integrating the flow}
\label{sec:integrating}
In this section we compute explicitly the hypo evolution flow in $\mathcal{D}$ for each nilpotent hypo Lie algebra, giving a description of the resulting orbits. In particular, we conclude that the orbits are semi-algebraic sets in the affine space $\mathcal{D}$ (i.e. they are defined by polynomial equalities and inequalities), each orbit has at most one limit point, and there are no periodic orbits.

Moreover the only critical point is the origin of $\mathcal{D}$: this means that the evolution flow in $(D)$ is always transverse to the group of automorphisms, except in the abelian case, when the flow is constant.

In fact, the vector fields on the families $\mathcal{M}_i$ that generate the evolution flow are homogeneous of degree $2$ in the coordinates of $\mathcal{D}$; this is a consequence of the construction, and implies that multiplication by a scalar in $\mathcal{D}$ maps orbits into orbits. So, we could also think of the flow as taking place in $\mathbb{P}(\mathcal{D})$. However, we shall refrain from using the projective quotient as it does not appear to simplify the description of the orbits.

Recall that there are three families of nilpotent hypo Lie algebras, which have to be studied separately. We begin with $\mathcal{M}_1$, by computing first integrals for the flow.
\begin{lemma}
\label{lemma:invariantcurves}
The evolution in $\mathcal{M}_1$ of
\[d_{\lambda,\mu,h,k}=(\lambda e^{35},he^{35}+ke^{15}, 0,(-\lambda e^2+he^1+\mu e^3)\wedge e^5,0)\]
leaves the following sets invariant:
\begin{align*}
O_1&=\{k=0=h=\lambda\},\\
O_2^{A}&=\left\{h=0, \lambda =0,  \frac{(\mu-k)^4}{\mu^3k^3}=A \right\},\\
O_3^{AP}&=\left\{ k=0, \frac{(\mu^2+4(h^2+\lambda^2))^2}{(h^2+\lambda^2)^3}= A,\quad [h:\lambda]\equiv P\in\RP^1\right\},\\
O_4^{AB}&=\left\{ h=0, \frac{(\lambda^2+3k^2+Bk^2\lambda)^3}{k^4\lambda^4}=A, \mu=\frac1{3k}(-2\lambda^2 +3k^2+Bk^2\lambda)\right\},\\
O_5^{AB}&=\left\{\lambda=0, \mu = \frac{Ah^4}{k^5}+\frac{h^2}k, Bh^3k^2+h^2k^4-Ah^4 +  k^4=0\right\},\\
O_6^{ABC}&=\left\{B\lambda(3h^2+\lambda^2+Ah\lambda)^2-3h^3(C\lambda-B)^2=0,\mu =\frac{ h^2-\lambda^2-\frac13Ah\lambda}k, k^4=\frac{Bh^3}{3\lambda}\right\}.
\end{align*}
Moreover every periodic orbit is contained in some $O_2^A$ or $O_4^{AB}$.
\end{lemma}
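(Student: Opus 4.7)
The plan is to first derive explicit ODEs governing the hypo evolution in the slice $\mathcal{M}_1$ in the four coordinates $(\lambda,\mu,h,k)$, and then verify directly that each polynomial relation defining $O_1,\dotsc,O_6^{ABC}$ is preserved by these ODEs; the periodic orbit statement will then follow from a monotonicity argument on the remaining cases.

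To derive the ODEs, I would fix $d=d_{\lambda,\mu,h,k}$ and read off the intrinsic torsion components $(f,g,\beta,\omega^-,\sigma_2^-,\sigma_3^-)$ by matching the given $de^i$ with the generic $\SU(2)$-structure equations recalled in Section~\ref{sec:gauge}. Inserting these into the explicit formula for $Q_P$ yields the matrix $\hat X_d\in\gl(5,\R)$, and by Proposition~\ref{prop:inducedvectorfield} the vector field on $\mathcal{D}$ is $\tilde X_d=\mu_{*e}(\hat X_d^T)d$. Since $\mathcal{M}_1$ is only a $\LieG{U}(2)$-slice, $\tilde X$ need not be tangent to $\mathcal{M}_1$; however, by Lemma~\ref{lemma:invariance} the induced flow in $\mathcal{D}/\LieG{U}(2)$ is well-defined, so after subtracting an infinitesimal $\LieG{U}(2)$-action that restores the slice normal form, one obtains an explicit polynomial vector field on $(\lambda,\mu,h,k)$, necessarily homogeneous of degree two, as observed in the opening remarks of the section.

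Once the ODE system is in hand, invariance of each $O_i$ becomes a finite polynomial computation: for $O_1$ and the linear cuts $\{k=0\}$, $\{h=0\}$, $\{\lambda=0\}$ appearing in $O_3^{AP},O_4^{AB},O_5^{AB}$, one checks that the corresponding coordinate derivative vanishes identically on the locus; for the sub-slices $\{k=0,h=0\}$ and similar, the further polynomial relations involving the parameters $A,B,C$ are then shown to be first integrals by differentiating and substituting the ODEs. The main obstacle I foresee is not the verification, which is mechanical, but identifying the first integrals in the first place, particularly the intricate one cutting out $O_6^{ABC}$. A systematic route is the Darboux polynomial method: search for polynomials $P$ with $\tilde X P = QP$ for some cofactor $Q$, and form ratios of such polynomials with matching cofactors to produce rational first integrals; on each sub-slice the symmetries forced by the restriction $k=0$ or $h=0$ reduce the degree considerably, and the expressions in the lemma can then be recognised.

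For the periodic orbit claim, my strategy is to exhibit on the complement of $O_2^A\cup O_4^{AB}$ a function which is strictly monotone along $\tilde X$. A natural family of candidates comes from the homogeneity of degree two: after quotienting out the obvious rescaling of time, one of the coordinates or a monomial built from them should have a sign-definite derivative, and one checks that this sign-definiteness fails precisely on the loci $\{h=0,\lambda=0\}$ and $\{h=0\}$ defining $O_2$ and $O_4$. Monotonicity rules out return, hence periodicity. Existence of actual periodic orbits inside $O_2^A$ and $O_4^{AB}$ is not asserted by the lemma and need not be discussed at this stage.
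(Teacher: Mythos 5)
Your proposal follows essentially the same route as the paper's proof: compute the intrinsic torsion of $d_{\lambda,\mu,h,k}$, form $\hat X_d$ and $\tilde X_d=\mu_{*e}(\hat X_d^T)d$, project back onto the slice $\mathcal{M}_1$ by adding an element of $\mu_{*e}(\su(2))d$, obtain the degree-two polynomial ODE in $(\lambda,\mu,h,k)$, verify the stated relations as first integrals by direct differentiation, and rule out periodic orbits off $\{h=0\}$ by exhibiting monotone quantities (the paper uses $k^2/h$ when $kh\neq0$ and $\lambda^2+h^2$ when $k=0$). The only cosmetic difference is that you propose finding the first integrals systematically via Darboux polynomials, whereas the paper admits to finding them by computer-assisted trial and error; this does not change the structure of the argument.
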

\begin{proof}
We compute
\begin{align*}
d\omega_2&= (he^1+\mu e^3)\wedge e^{52}-e^4\wedge(he^{35}+ke^{15})=e^5\wedge(-he^{12}+\mu e^{23}+he^{34}+ke^{14}),\\
d\omega_3&=\lambda e^{354}  -e^1\wedge (-\lambda e^2+\mu e^3)\wedge e^5 -ke^{135}=e^5\wedge(-\lambda e^{34}+\lambda e^{12} -\mu e^{13}-ke^{13}).
\end{align*}
Therefore the components of the intrinsic torsion can be given by
\begin{gather*}
\beta=0, f=0, \omega^-=0, (\sigma_2^-)_a=-h/2, (\sigma_2^-)_c=\frac{k-\mu}4, g=\frac{k+\mu}2,\\
(\sigma_3^-)_a=\lambda/2, (\sigma_3^-)_b=-\frac{\mu+k}4.
\end{gather*}
The induced infinitesimal gauge transformation determined by the hypo evolution flow is given by
\[ \hat X_{d_{\lambda,\mu,h,k}}=\left(\begin{array}{ccccc}\frac{1}{2}  k&0&\frac{1}{2}  h&-\frac{1}{2}  \lambda&0\\0&-\frac{1}{2}  k&-\frac{1}{2}  \lambda&-\frac{1}{2}  h&0\\\frac{1}{2}  h&-\frac{1}{2}  \lambda&\frac{1}{2}  \mu&0&0\\-\frac{1}{2}  \lambda&-\frac{1}{2}  h&0&-\frac{1}{2}  \mu&0\\0&0&0&0&\frac{1}{2} k+\frac{1}{2} \mu\end{array}\right).\]
Thus
\begin{multline*}
\tilde X_{d_{\lambda,\mu,h,k}}=\mu_{*e}(\hat X(d_{\lambda,\mu,h,k}))d_{\lambda,\mu,h,k}=\biggl(-\frac{3}{2}  \lambda \mu e^{35}+ \lambda^{2} e^{25}- \lambda e^{15} h,\\
-\frac{3}{2}  k^{2} e^{15}+\frac{1}{2}  k \lambda e^{45}-\frac{3}{2}  \mu e^{35} h-\frac{3}{2}  k e^{35} h-\frac{1}{2}  k \mu e^{15}+ \lambda e^{25} h- e^{15} h^{2},
-\frac{1}{2}  k \lambda e^{15},\\
\frac{3}{2}  \lambda \mu e^{25}- \lambda^{2} e^{35}-\frac{3}{2}  \mu e^{15} h-\frac{3}{2}  k e^{15} h-\frac{1}{2}  k \mu e^{35}-\frac{3}{2}  \mu^{2} e^{35}- e^{35} h^{2},0\biggr).
\end{multline*}
This vector field is not tangent to $\mathcal{M}_1$. However, we can project to a vector that is tangent to the family by using the $\SU(2)$-invariance: indeed, adding an appropriate element in
$\mu_{*e}(\su(2))d_{\lambda,\mu,h,k}$,
 we obtain
\begin{multline*}\biggl(- \lambda \mu e^{35},-\frac{1}{2}  {(3 k^{2}+ k \mu)} e^{15}- {(2  k h+ \mu h)} e^{35},0,\\
-\frac{1}{2}  {(4 \lambda^{2}+ k \mu+3 \mu^{2}+4 h^{2})} e^{35}+ \lambda \mu e^{25}- {(2  k h+ \mu h)} e^{15},0\biggr).
\end{multline*}
We conclude that evolution in $\mathcal{M}_1$ is given by
\[\begin{cases}\lambda'=-\mu\lambda\\ h'=-\mu h -2hk\\ k'=-\frac{1}{2}  \mu k -\frac{3}{2}  k^{2} \\ \mu'=-\frac{3}{2}  \mu^{2} -2  h^{2} -2  \lambda^{2} -\frac{1}{2}  \mu k \end{cases}\]

The only critical point is the origin. In order to describe the other orbits, we make use of certain first integrals that were found by trial and error using a computer. A posteriori, it is straightforward to verify that the following rational functions are first integrals on their domain of existence:
\[\frac{h^2-\lambda^2-k\mu}{h\lambda}, \quad \frac{k^4\lambda}{h^3}, \quad\frac{k^2(6h^2  -2\lambda^2-3\mu k + 3k^2)}{h^3}.\]
The invariant set $O_6^{ABC}$ is obtained by assuming $k\neq0$ and setting these functions equal to $A/3$, $B/3$ and $C$ respectively.

More than these three first integrals, what is relevant is the field extension of $\R$ they generate. This field also contains rational functions that are defined on the invariant hyperplane $h=0$. Indeed, an alternative description of the same curves, also valid for $h=0$, can be obtained by setting
\[A=\frac{h^2-\lambda^2-k\mu}{(k\lambda)^{4/3}}, \quad B= \frac{h^3}{k^4\lambda}, \quad C=\frac{6h^2  -2\lambda^2-3\mu k + 3k^2  }{k^2\lambda}.\]
Then
\begin{align*}
\mu k&=-A(k\lambda)^{4/3}+(Bk^4\lambda)^{2/3}-\lambda^2,\\
Ck^2\lambda&=3(Bk^4\lambda)^{2/3}  +\lambda^2+3A(k\lambda)^{4/3} + 3k^2 .
\end{align*}
For $B=0$, up to renaming the constants, we obtain the family denoted in the statement by $O_4^{AB}$.

Similarly, we can describe the orbits contained in $\{\lambda=0\}$ by the two first integrals
\[
 -A=\frac{k^4(h^2-\mu k)}{h^4}, \quad -B=\frac{k^2(2h^2-\mu k +  k^2)}{h^3}.\]
This description gives rise to the curve
\[\mu = \frac{Ah^4}{k^5}+\frac{h^2}k, \quad Bh^3k^2+h^2k^4-Ah^4 +  k^4=0,\]
denoted in the statement by $O_5^{AB}$.

In the invariant hyperplane $\{k=0\}$, the evolution equations imply $h'/h=\lambda'/\lambda$. Assuming $h,\lambda$ are not both zero, it follows that  the point $[h:\lambda]$ of $\RP^1$ does not vary along orbits; equivalently, we can set
\[h=(\cos B) s,\quad \lambda=(\sin B) s, \quad s=\sqrt{h^2+\lambda^2},\]
where $B$ is a constant. Assuming both $\mu$ and $s$ are nonzero, we can use $s$ as a new variable, and compute explicitly
\[\mu^2=-4s^2+Ks^3,\]
showing that $(\mu^2+4s^2)/s^3$ is a first integral. This is also true when $\mu=0$, so $O_3$ is indeed invariant.

Restricting now to the invariant plane $\{h=0,\lambda=0\}$, the evolution equations become
\[\begin{cases} k'/k=-\frac{1}{2}  \mu  -\frac{3}{2}  k \\ \mu'/\mu=-\frac{3}{2}  \mu -\frac{1}{2}   k \end{cases}\]
Up to a change of variable, we obtain
\[\frac{(k^3\mu^{-1})'}{k^3\mu^{-1}}=-k/\mu, \quad \frac{(k\mu^{-3})'}{k\mu^{-3}}=1,\]
which can be solved explicitly, showing that
$(\mu-k)^4/(k\mu)^3$ is a first integral. This proves that $O_2^A$ is invariant.

Finally, the invariance of $O_1$ is obvious.

\smallskip
The fact that all periodic orbits are contained in the families $O_2^A$, $O_4^{AB}$ can be deduced directly from the evolution equations by producing a quantity that varies monotonously along the flow. Indeed, if $kh\neq0$, then $k^2/h$ is monotonous. If $k=0$, then $\lambda^2+h^2$ is either monotonous or identically zero, in which case we are in $O_1$, that obviously contains no periodic orbit. If $k\neq0$ and $h=0$, we are in the families $O_2^A$, $O_4^{AB}$.
\end{proof}

Having computed invariant curves for the flow, we can now give a more explicit description of the orbits. This amount essentially to detecting the connected components of the invariant curves appearing in Lemma~\ref{lemma:invariantcurves}.

\begin{theorem}
\label{thm:evolveM1}
With respect to the hypo evolution of $\mathcal{M}_1$, all orbits are semi-algebraic subsets of $\mathcal{D}$ and have at most one limit point, namely the unique critical point $d_{0,0,0,0}=0$; in particular, there is no periodic orbit. Orbits are mapped into orbits by the transformations sending $d_{\lambda,\mu,h,k}$ into
\[d_{-\lambda,\mu,h,k}, \quad d_{\lambda,\mu,-h,k}, \quad d_{\lambda,-\mu,h,-k}.\]
Up to these symmetries, the non-trivial orbits are
\begin{gather*}
\{h=0=k=\lambda, \mu>0\};  \quad \{\mu=k>0, h=0=\lambda\};\quad
O_2^A\cap \{\mu>k, \pm k>0\}, A>0;\\
O_2^A\cap \{\mu>k\}, A<0;\quad
O_3^{AP}\cap \{h,\lambda\geq0\}, A>0, P\in\RP^1;\\O_4^{AB}\cap\{k>0,\lambda>0\};\quad
O_5^{AB}\cap\{h>0,k>0\}, \quad (A,B)\neq(0,0);
\end{gather*}
and the connected components of $O_6^{ABC}$, $B,C>0$. The latter can be parametrized as
\[\lambda(s)=\frac{s^{3}B^2}{s^4B^{2}+3+ABs^2+BCs^{3}}, \quad h(s)=\frac13s^2B\lambda, \]
\[k(s)=\frac{B}{3}\sqrt{\abs{s^{3}\lambda}}, \quad \mu(s) =\frac{ h^2-\lambda^2-\frac13Ah\lambda}k\]
where $s$ ranges in any maximal interval in $\R^*$ where the denominator of $\lambda(s)$ is nonzero.
\end{theorem}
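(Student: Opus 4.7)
The plan is to build on Lemma~\ref{lemma:invariantcurves}: we have the explicit ODE system
\[\lambda'=-\mu\lambda, \quad h'=-(\mu+2k)h, \quad k'=-\tfrac{1}{2}k(\mu+3k),\quad \mu'=-\tfrac{3}{2}\mu^2 -2h^2-2\lambda^2-\tfrac{1}{2}\mu k,\]
the three first integrals generating the families $O_i$, and the observation that outside of $O_2^A$ and $O_4^{AB}$ the quantity $k^2/h$ is strictly monotone (so no closed orbit can occur). The overall strategy is to verify the symmetries, dispose of the remaining periodic-orbit candidates, identify the connected components of each invariant set, and finally produce an explicit parametrization of the generic family $O_6^{ABC}$.

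First I would check the three symmetries by inspection of the ODE: the transformations $\lambda\mapsto-\lambda$, $h\mapsto-h$, and $(\mu,k)\mapsto(-\mu,-k)$ each preserve the system, with the last one reversing time; hence each carries orbits to orbits. Next, to complete the no-periodic-orbit claim, I would restrict the ODE to $O_2^A$ (the plane $h=\lambda=0$) and to $O_4^{AB}$ (the hyperplane $h=0$) and exhibit a strictly monotone function. On $O_2^A$ one computes $(\mu k)'=-2\mu k(\mu+k)$, so along any orbit the sign of $\mu+k$ is constant and $\mu k$ is therefore monotone; combined with the analogous discussion of the half-planes $\mu k<0$ this rules out periodic orbits. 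On $O_4^{AB}$ one similarly uses $\lambda'=-\mu\lambda$ together with the constraint equations. Together with the existence and uniqueness of the critical point $0$, and the semi-algebraic nature of the invariant sets, this also yields the ``at most one limit point'' claim: an $\omega$-limit point would be critical, and the only critical point of the system is the origin.

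The enumeration is then a matter of dissecting each $O_i$ into connected components and checking, modulo the three symmetries, that each component is a single orbit. For $O_1$, $O_2^A$, $O_3^{AP}$ this is immediate from the ODE on the respective low-dimensional invariant subspaces. For $O_4^{AB}$ and $O_5^{AB}$ the sign conditions $k>0$, $\lambda>0$ (respectively $h>0$, $k>0$) fix a component up to the symmetries, and the evolution equations show it is traversed monotonically. For $O_6^{ABC}$ one verifies that the sign requirement $B,C>0$ singles out the connected components reached by the flow. The parametrization of $O_6^{ABC}$ is obtained by taking $s$ essentially as a rescaled version of $h/\lambda$ (or equivalently, using the equation $k^4=Bh^3/(3\lambda)$ to eliminate $k$ in terms of $s$ and $\lambda$, then substituting back into the first integrals $A$ and $C$): the relation $h=\tfrac{1}{3}s^2 B\lambda$ together with $k^4=Bh^3/(3\lambda)$ yields $k=\tfrac{B}{3}\sqrt{|s^3\lambda|}$, and imposing the remaining first-integral relation gives the stated rational expression for $\lambda(s)$.

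The main obstacle I expect is the bookkeeping for connected components and signs in $O_6^{ABC}$: the defining relations are polynomial of high degree in four variables and one must track which combinations of signs of $(h,\lambda,k,\mu)$ are realized by the flow, which are related by the symmetries, and which correspond to distinct maximal intervals of $s$ in the parametrization. The rest of the argument is a somewhat routine verification once the first integrals of Lemma~\ref{lemma:invariantcurves} are in hand.
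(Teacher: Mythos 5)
Your overall strategy coincides with the paper's: take the ODE and first integrals from Lemma~\ref{lemma:invariantcurves}, verify the symmetries by inspection, reduce the enumeration of orbits to identifying the connected components of the invariant curves $O_i$ (using that the origin is the only critical point), and parametrize $O_6^{ABC}$ via $s\sim\sqrt{3h/(B\lambda)}$ — your derivation of $k=\frac{B}{3}\sqrt{|s^3\lambda|}$ and of $\lambda(s)$ matches the paper's. However, there are two genuine gaps. First, your justification of ``at most one limit point'' rests on the claim that an $\omega$-limit point must be a critical point. That is false in general ($\omega$-limit sets routinely contain non-critical points), and it is not automatic here because the relevant limit points may lie in the \emph{closure} of an invariant curve $O_i$ rather than in $O_i$ itself, where your component-counting gives no control. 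The paper instead argues case by case: e.g.\ for $O_4^{AB}$ it shows the limit set is forced into the hyperplane $\lambda=0$ (any other hyperplane $\lambda\equiv\lambda_0$ meets the invariant sets discretely and contains no critical point) and then tracks $k$, $\lambda^2/k$ and $\mu$ as $\lambda\to0$; for $O_5^{AB}$ with $A>0$ it shows $\mu\to\infty$, so that orbit has \emph{no} limit point. Without some such analysis the limit-point claim is unproved.

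Second, you describe the connected-component analysis of $O_4^{AB}$ and $O_5^{AB}$ as routine, but this is where most of the work lies: the issue is not which sign sector the flow visits but whether, say, $O_4^{AB}\cap\{k>0,\lambda>0\}$ is a \emph{single} arc rather than several, since only in the former case is it a single orbit. The defining equation of $O_4^{AB}$ is a cubic in $t=k^{2/3}$ whose fiber over $\lambda$ has $0$, $1$ or $2$ positive roots; the paper counts these with a Sturm sequence and Sylvester's theorem, checks that the two-point region $S_2$ is connected with a unique boundary value $\lambda_b$ where the roots collide, and concludes the curve is an interval. An analogous discriminant analysis (quadratic in $k^2$, five sign cases in $A,B$) is needed for $O_5^{AB}$. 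Your sketch does not supply an argument for either, and "the evolution equations show it is traversed monotonically" presupposes exactly the connectedness that needs proving. The symmetry verification, the monotone-quantity argument excluding periodic orbits (note that on $h=\lambda=0$ the cleaner monotone quantity is $\mu+k$ itself, whose derivative is $-\tfrac32(\mu^2+k^2)-\mu k<0$ away from the origin, rather than the sign-of-$\mu+k$ argument you give), and the $O_6^{ABC}$ parametrization are all sound and agree with the paper.
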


\begin{proof}
The only critical point is the origin, so all orbits are connected, smooth immersed curves. Thus, we only need determine the connected components of the invariant curves $O_i$. We have six cases to consider.

\emph{i}) The invariant set $O_1$ is a line containing the origin, so it contains three orbits, all non-periodic, each with the origin itself as the unique limit point. The half-line $\mu<0$ is obtained from the half-line $\mu>0$ by a symmetry.

\emph{ii}) In order to study $O_2^{A}$, introduce the variable $s=\mu-k$. If $A=0$ then $s\equiv0$; so there is no periodic orbit and the origin is the only limit point. Assume that $A$ is nonzero; then $s\neq0$ on $O_2^A$, so $s>0$ and $s<0$ define distinct components. Up to a  symmetry, we can assume $s>0$. For each fixed $s$, we get the two points
\[\mu=k+s, \quad k=\frac12\left(-s\pm \sqrt{s^2+4A^{-1/3}s^{4/3}}\right).\]
So if $A>0$, to each value of $s$ correspond two points, one with $k>0$ and the other with $k<0$. Therefore, $O_2^A$ contains two distinct orbits, both non-periodic with a single limit point, namely the origin.
On the other hand if $A<0$, there are two points over $s$ if $s^2>-64/A$, and one point if equality holds. So in this case there is a single orbit, non-periodic, with no limit points.

\emph{iii}) We can rewrite $O_3^{AP}$ using the equation
\[\mu^2=\sqrt{A}r^3-4r^2, \quad r^2=h^2+\lambda^2;\]
this shows immediately that $A$ has to be positive. Using the symmetries, we can restrict to $\{h,\lambda\geq0\}$, which intersects $O_3^A$ in a single orbit, non-periodic, with no limit points.

\emph{iv})
Now consider $\tilde O_4^{AB}=O_4^{AB}\cap\{k>0,\lambda>0\}$, defined by the equation
\[\frac{(\lambda^2+3k^2+Bk^2\lambda)^3}{k^4\lambda^4}=A,\]
that we rewrite as
\[k^2(3+B\lambda)-k^{4/3}(A^{1/3}\lambda^{4/3})+\lambda^2 =0.\]
For fixed values of $A,B,\lambda$, by setting $t=k^{2/3}$ we obtain a  polynomial in $t$, namely
\[p_{AB\lambda}(t)=t^3(3+B\lambda)-t^{2}A^{1/3}\lambda^{4/3}+\lambda^2\]
which for short we rewrite as
\[p(t)=at^3+bt^2+c,\]
where $c>0$. Assuming for the moment that $a,b\neq0$, the Sturm sequence of $p$ is
\[p, 3at^2+2bt, \frac29 \frac{b^2}{a}t-c, -\frac{9ac}b - \frac{243a^3c^2}{4b^4}.\]
The number of positive roots can be computed using the Sylvester theorem (see e.g. \cite{RealAlgebraicGeometry}). To that end, we compute the number $\nu(p,t)$ of sign changes of the Sturm sequence evaluated at $t$; remembering that $c>0$, we find
\begin{multline*}
\nu(p,+\infty)=\operatorname{signchanges}\left(a, 3a, \frac29 \frac{b^2}{a} ,  -\frac{9ac}b - \frac{243a^3c^2}{4b^4}\right)\\
=\operatorname{signchanges}\left(1,1,1,-4b^3-27a^2c\right)
 \end{multline*}
and
\begin{multline*}
\nu(p,0)=\operatorname{signchanges}\left(c, 0, -c,   -\frac{9ac}b - \frac{243a^3c^2}{4b^4}\right)\\
=\operatorname{signchanges}\left(1, 0, -1,  a(- 4b^3-27a^2c)\right) 
\end{multline*}
The number of positive roots of $p$ equals $\nu(p,0)-\nu(p,+\infty)$, by the Sylvester theorem; when either $a$ or $b$ are zero, it can be determined directly. So there are the following possibilities:
\begin{align*}
 a&>0, b\neq0,-4b^3-27a^2c>0& &\implies \text{ two positive roots,}\\
 a&<0,\quad \text{or } a=0,b<0& &\implies \text{ one positive root,}\\
a&>0,b\neq0,-4b^3-27a^2c=0 & &\implies \text{ one positive double root,}
\end{align*}
and no positive root otherwise.
Returning to the original variables, we have
\[-4b^3-27a^2c\geq0 \iff
\frac{(3+B\lambda)^2}{\lambda^2}\leq\frac{4}{27}A.\]
We claim that $\tilde O_4^{AB}$ is either empty or connected. Indeed, for fixed $A,B$ let
\[S_k=\{\lambda>0\mid p_{AB\lambda}(t) \text{ has at least $k$ positive roots}\}.\]
Then $\tilde O_4^{AB}$ has $2$ points over each point of $S_2$. If $S_2$ is not empty, it is easy to check that it is connected and its boundary contains exactly one $\lambda_b\in\R$ corresponding to a double root. Hence, the part of the curve lying over $S_2\cup\{\lambda_b\}$ is connected. Likewise, $S_1$ is connected. It follows that $\tilde O_4^{AB}$ is either empty, or diffeomorphic to an interval. So, it consists of a single non-periodic orbit.

By construction, the limit set is necessarily contained in some hyperplane $\lambda\equiv\lambda_0$. If $\lambda_0\neq0$, this hyperplane intersects each invariant set appearing in Lemma~\ref{lemma:invariantcurves} in a discrete set and contains no critical point. It follows that the limit set is contained in $\lambda\equiv0$. On the other hand, inside $\tilde O_4$, if $\lambda$ goes to zero then so do $k$ and $\lambda^2/k$. Since
\[\mu=\frac1{3k}(-2\lambda^2 +3k^2+Bk^2\lambda),\]
it follows that the only limit point is the origin.

\emph{v})
Now consider the curve $\tilde O_5^{AB}=O_5^{AB}\cap\{h>0,k>0\}$, defined by \[Bh^3k^2+h^2k^4-Ah^4 +  k^4=0.\]
We assume that $A,B$ are not both zero, for otherwise $k=0$. Solving in $t=k^2$, we find
\[t=\frac{-Bh^3\pm h^2\sqrt{(B^2+4A) h^2+4A}}{2(h^2+1)}.\]
\begin{itemize}
 \item If $4A\leq -B^2$, there is no positive solution.
 \item If $-B^2<4A\leq0$ and $B>0$, there is no positive solution.
 \item If $-B^2<4A<0$ and $B<0$, there are two positive solutions for $h^2$ greater than $-4A/(B^2+4A)$, and one when equality holds.
  \item If $A=0$ and $B<0$, there is one positive solution for all $h>0$.
 \item If $A>0$, there is one positive solution for all $h>0$.
\end{itemize}
We conclude that $\tilde O_5^{AB}$ is either empty or connected.

As for the limit set, it is contained in $h=0$. By definition of $\tilde O_5^{AB}$, $k$, $h^2/k$ go to zero when $h$ goes to zero. Hence, if $A=0$,  $\mu=h^2/k$ also goes to zero, and the only limit point is the origin. On the other hand if $A>0$, then $k^2/h^2$ goes to $\sqrt{A}$ when $h$ goes to zero, so $\mu$ goes to infinity and there is no limit point.

\emph{vi}) We now turn to $O_6^{ABC}$, whose defining equation is
\[B\lambda(3h^2+\lambda^2+Ah\lambda)^2-3h^3(C\lambda-B)^2=0.\]
Two of the symmetries in the statement have the effect of changing the signs of $C,\lambda,h$ and $A,B,\lambda$ respectively. Thus we can assume $C>0$ and $B>0$.

Introduce a variable $z=\frac{h}{B\lambda}$; then $z>0$ on $O_6^{ABC}$.
We compute
\[\lambda^2(3z^2B^{2}+1+ABz)^2-3B^2z^3(C\lambda-B)^2=0,\]
or equivalently
\[\lambda(3z^2B^{2}+1+ABz)\pm\sqrt{3} Bz^{3/2}(C\lambda-B)=0.\]
Setting $s=\pm\sqrt {3z}$, we obtain
\[\lambda=\frac{s^{3}B^2}{s^4B^{2}+3+ABs^2+BCs^{3}}.\]
By construction, $h=\frac13s^2B\lambda$; moreover, by definition of $O_6^{ABC}$ we have
\[k^4=\frac1{81}B^4s^6\lambda^2.\]
Up to a symmetry, we can impose that $k>0$, and obtain the curve appearing in the statement. It is now obvious that only the origin can be a limit point.
\end{proof}

This concludes our study of the first family. Recall that the second family $\mathcal{M}_2$ consists of elements of the form
\begin{multline*}
d_{x,y,h,k,\lambda,\mu}=
\bigl(0, xe^{34}+\lambda e^{35},0, x(e^{14}-e^{23})-ye^{34}+\lambda e^{15}-\mu e^{35},\\
-h(e^{14}-e^{23})+ke^{34}-x e^{15}+y e^{35}\bigr).
\end{multline*}
More precisely, $\mathcal{M}_2$ is an algebraic variety in $\mathcal{D}$ which can be described as the union
\[\mathcal{M}_2=\bigcup_{\substack{l_0^2 =l_1l_2}} \mathcal{M}_{2,l}, \quad \mathcal{M}_{2,l}=\{d_{x,y,h,k,\lambda,\mu}\mid (x,h,\lambda), (y,k,\mu)\in l\},\]
where $l=[l_0:l_1:l_2]$ is viewed as both a point in $\RP^2$ and a line in $\R^3$.

\begin{theorem}
\label{thm:evolveM2}
With respect to the hypo evolution of $\mathcal{M}_2$, all orbits are semi-algebraic subsets of $\mathcal{M}_2$ and have at most one limit point, namely the unique critical point $d_{0,\dots,0}=0$; in particular, there is no periodic orbit. Orbits are mapped into orbits by the transformations sending $d_{x,y,h,k,\lambda,\mu}$ into
\[
d_{-x,y,-h,k,-\lambda,\mu}, \quad d_{x,-y,h,-k,\lambda,-\mu}, \quad d_{-x,-y,h,k,\lambda,\mu}.
\]
Up to these symmetries, the non-trivial orbits are
\begin{gather*}
O_{Al}=\left\{d_{x,y,h,k,\lambda,\mu}\in\mathcal{M}_{2,l}\mid (h+\lambda)^3=A((\mu+k)^2+4(h+\lambda)^2),  x, y,h,\lambda,k,\mu>0\right\} , A>0; \\
O_l=\left\{d_{x,y,h,k,\lambda,\mu}\in\mathcal{M}_{2,l}\mid x=0=h=\lambda, y,k,\mu\geq 0, (k,\mu)\neq(0,0)\right\};
\end{gather*}
where $l$ varies among points of the quadric $l_0^2=l_1l_2$.
\end{theorem}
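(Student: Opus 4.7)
The proof strategy follows the same template established for $\mathcal{M}_1$ in Lemma~\ref{lemma:invariantcurves} and Theorem~\ref{thm:evolveM1}. The first step is to compute the intrinsic torsion of the hypo structure $d_{x,y,h,k,\lambda,\mu}$. I would compute $d\omega_2$ and $d\omega_3$ explicitly, then read off the components $\beta$, $f$, $g$, $\omega^-$, $\sigma_2^-$, $\sigma_3^-$ in the structure equations~\eqref{eqn:hypoIT}. Feeding these into the formula for $Q_P$ produces the map $\hat X\colon\mathcal{M}_2\to\gl(5,\R)$, and hence the vector field $\tilde X_d=\mu_{*e}(\hat X^T_d)d$ on $\mathcal{D}$. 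As in the treatment of $\mathcal{M}_1$, this vector field is generically not tangent to the family $\mathcal{M}_2$, but can be corrected by adding a suitable element of $\mu_{*e}(\su(2))d$, using the $\LieG{U}(2)$-invariance of the flow guaranteed by Lemma~\ref{lemma:invariance}. The outcome is an explicit polynomial ODE system in the six coordinates $(x,y,h,k,\lambda,\mu)$.

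The next step is to verify that each stratum $\mathcal{M}_{2,l}$ is invariant under the flow. Using the parametrization $(x,h,\lambda)=h(t,1,t^2)$, $(y,k,\mu)=k(t,1,t^2)$ on the generic stratum, one rewrites the system in the reduced variables $(t,h,k)$ and checks that $t$ is constant along orbits. The degenerate case $h=k=0$, where the rank condition forces $x=y=0$, reduces to a two-variable system in $(\lambda,\mu)$ that has to be handled separately; this is the stratum giving the orbits $O_l$ with $x=h=\lambda=0$ in the statement. On each non-degenerate $\mathcal{M}_{2,l}$ the dynamics reduces to a planar flow in $(h,k)$ (after scaling out $t$), and one looks for a first integral of the form suggested by the statement, namely a function whose level sets cut out the curves $(h+\lambda)^3=A((\mu+k)^2+4(h+\lambda)^2)$; with $(h+\lambda)=(1+t^2)h$ and $(\mu+k)=(1+t^2)k$ this should reduce to a first integral on the $(h,k)$ plane of the same shape as the one found for $O_3^{AP}$ in Lemma~\ref{lemma:invariantcurves}, and is then verified by direct differentiation.

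With the first integral in hand, the remaining step is the classification of connected components and the limit-set analysis. I would verify directly that the three listed involutions preserve both $\mathcal{M}_2$ and the ODE (each is realized by a signed permutation of the standard coframe, compatibly with the $\LieG{U}(2)$-action), and that after quotienting by them one may impose positivity of all of $x,y,h,k,\lambda,\mu$ on the generic orbits $O_{Al}$, and non-negativity of $y,k,\mu$ on $O_l$. The only critical point of the flow is the origin, so orbits are smooth immersed curves; their connectedness on each invariant set can be checked with the same kind of Sturm-sequence or monotonicity argument used in case \emph{iv}) of the proof of Theorem~\ref{thm:evolveM1}, but is expected to be considerably simpler here because the first integral is quadratic in $(\mu+k)$. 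The absence of non-origin limit points follows by the same reasoning: any accumulation hyperplane intersects $\mathcal{M}_{2,l}$ in a discrete set away from $0$, and along the orbit the remaining coordinates tend to zero jointly because of the homogeneity of the ODE and the line constraint.

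The main obstacle I anticipate is the first step: the intrinsic torsion computation for $\mathcal{M}_2$ is considerably bulkier than in the $\mathcal{M}_1$ case because the structure equations mix the coordinates $(x,y,h,k,\lambda,\mu)$ through the combinations $e^{14}-e^{23}$ and $e^{34}$, and the $\SU(2)$-correction needed to stay tangent to $\mathcal{M}_2$ must be chosen carefully to respect the rank condition. A secondary obstacle is spotting the specific first integral announced in the statement; as in Lemma~\ref{lemma:invariantcurves}, this is essentially heuristic, and only verified \emph{a posteriori} once the ODE has been reduced to the two-variable form by eliminating the line parameter.
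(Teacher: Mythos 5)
Your proposal is correct and follows essentially the same route as the paper: compute $\hat X$ on $\mathcal{M}_2$, derive the polynomial ODE, check invariance of each $\mathcal{M}_{2,l}$ and of the sign symmetries, and reduce to a planar system admitting the first integral $(h+\lambda)^3/((\mu+k)^2+4(h+\lambda)^2)$. The only cosmetic difference is that the paper reads off the closed two-dimensional subsystem in the variables $(h+\lambda,\mu+k)$ directly, rather than via your parametrization of the line $l$ by $t$ (and, as it turns out, records no $\su(2)$-correction as being needed for $\mathcal{M}_2$), but these amount to the same reduction.
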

\begin{proof}
Proceeding as in Lemma~\ref{lemma:invariantcurves}, we compute
\[\hat X_{d_{x,y,h,k,\lambda,\mu}}=\left(\begin{array}{ccccc}0&0&\frac{1}{2}  \lambda+\frac{1}{2}h&0&0\\0&0&0&-\frac{1}{2}  \lambda+\frac{1}{2} h&x\\\frac{1}{2}  \lambda+\frac{1}{2} h&0&-\frac{1}{2} k-\frac{1}{2}  \mu&0&0\\0&-\frac{1}{2}  \lambda+\frac{1}{2} h&0&-\frac{1}{2} k+\frac{1}{2} \mu&- y\\0&x&0&- y&\frac{1}{2} k-\frac{1}{2}  \mu \end{array}\right).\]
The resulting ODE is
\begin{gather*}
\begin{cases}x'&=x(\mu+k)\\
\lambda'&=\lambda(\mu+k)\\
h'&=h(\mu+k)\\
\mu'&= 2\lambda(h+\lambda)+\frac32\mu(\mu+k)\\
y' &= 2x(h+\lambda)+\frac32y(\mu+k)\\
k'&=2h(h+\lambda)+\frac32k(\mu+k)\\
\end{cases}
\end{gather*}
Critical points are given by the subspace $\{\mu+k=0, h+\lambda=0\}$, which intersects $\mathcal{M}_2$ only in the origin. It is also easy to verify that each $\mathcal{M}_{2,l}$ is invariant under the flow, and that the symmetries appearing in the statement map orbits into orbits. By construction $\mu k$ and $h \lambda$ are non-negative; using $[x:h:\lambda]=[y:k:\mu]$,  we can assume up to symmetry that all parameters are non-negative.

In order to determine the integral lines, consider the associated two-dimensional system
\[\begin{cases}
 (h+\lambda)'=(h+\lambda)(\mu+k)\\
 (\mu+k)'= 2(h+\lambda)^2+\frac32(\mu+k)^2\end{cases}
\]
which has only the origin as a critical point. Outside of the origin, a first integral is given by
\[\frac{(h+\lambda)^3}{(\mu+k)^2+4(h+\lambda)^2}\equiv A.\]
Since $h+\lambda\geq0$, necessarily $A\geq0$. If $A=0$, we find the orbit $O_l$.
Otherwise, $A>0$ and the orbit is contained in $O_{Al}$; writing \[\mu+k=(h+\lambda)\sqrt{\frac{h+\lambda}A-4},\]
we see that $O_{Al}$ is connected, so it coincides with the orbit. It is clear that the only limit point is the origin.
\end{proof}
We now come to the last family $\mathcal{M}_3$, whose general element has the form
\[d_{\lambda,\mu}=\left(0,0,0,0,(\lambda+\mu)e^{12}+(\lambda-\mu) e^{34}\right). \]
\begin{theorem}
\label{thm:evolveM3}
With respect to the hypo evolution of $\mathcal{M}_3$, all orbits are semi-algebraic subsets of $\mathcal{M}_3$ and have at most one limit point, namely the unique critical point $d_{0,0}=0$; in particular, there is no periodic orbit. Orbits are mapped into orbits by the transformations sending $d_{\lambda,\mu}$ into
\[
d_{-\lambda,\mu}, \quad d_{\lambda,-\mu}.
\]
Up to these symmetries, the non-trivial orbits are
\begin{gather*}
\{\mu=0,\lambda>0\};\quad \left\{(\lambda^2-\mu^2)^3= A \mu^{4}, \lambda,\mu>0\right\}, A\geq0;\\
\left\{(\lambda^2-\mu^2)^3= A \mu^{4}, \mu>0\right\}, A<0.
\end{gather*}
\end{theorem}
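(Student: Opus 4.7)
The plan is to mirror the structure of the proofs of Theorems~\ref{thm:evolveM1} and~\ref{thm:evolveM2}: compute the infinitesimal gauge transformation from the intrinsic torsion, derive the induced ODE on the parameter space, find first integrals, and then analyze the level sets case by case.

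For $d = d_{\lambda,\mu}$ in $\mathcal{M}_3$, the forms $e^1,\dotsc,e^4$ are closed, while
\[de^5 = \lambda(e^{12}+e^{34}) + \mu(e^{12}-e^{34}).\]
Comparison with the structure equations~\eqref{eqn:hypoIT} shows that $f=\lambda$, $\omega_a^-=\mu/2$, and every other component of the intrinsic torsion vanishes. Substituting into the formula for $Q_P$, one obtains the diagonal map
\[\hat X_{d_{\lambda,\mu}} = \tfrac12\diag(-\lambda-\mu,\,-\lambda-\mu,\,-\lambda+\mu,\,-\lambda+\mu,\,2\lambda).\]
Since $\hat X$ is symmetric and the only nonzero differential is $d(e^5)$, Proposition~\ref{prop:inducedvectorfield} reduces the computation of $\tilde X$ to one on $e^5$, giving
\[\tilde X_{d_{\lambda,\mu}}(e^5) = (2\lambda+\mu)(\lambda+\mu)\,e^{12} + (2\lambda-\mu)(\lambda-\mu)\,e^{34},\]
which is already of the form $d_{\lambda',\mu'}(e^5)$; no $\su(2)$-projection onto $\mathcal{M}_3$ is required. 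Equating coefficients yields the two-dimensional ODE
\[\lambda' = 2\lambda^2 + \mu^2, \qquad \mu' = 3\lambda\mu.\]

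From here the analysis is elementary. The only critical point is the origin, and the maps $(\lambda,\mu)\mapsto(-\lambda,\mu)$ and $(\lambda,\mu)\mapsto(\lambda,-\mu)$ send orbits to orbits (the former via time reversal, the latter as an honest flow symmetry). A direct differentiation shows that
\[F(\lambda,\mu) = \frac{(\lambda^2-\mu^2)^3}{\mu^4}\]
is constant along any solution with $\mu\neq0$, while the axis $\mu=0$ is itself invariant, with $\lambda' = 2\lambda^2>0$ producing a single non-trivial orbit $\{\mu=0,\lambda>0\}$ up to symmetry.

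Off the axis, reduce to $\mu>0$ using the flow symmetry, and set $A=F$. For $A=0$ the relation forces $\lambda=\pm\mu$, which by the further symmetry reduces to $\lambda=\mu>0$, and one checks directly that this ray is an orbit. For $A>0$, the positive branch of the level set is $\lambda = \sqrt{\mu^2 + A^{1/3}\mu^{4/3}}$, a connected smooth curve limiting to the origin as $\mu\to0^+$ and escaping to infinity in finite time in the other direction. For $A<0$ the level set is non-empty only for $\mu\geq(-A)^{1/2}$, with two branches $\lambda\gtrless0$ meeting at the vertex $(\lambda,\mu)=(0,(-A)^{1/2})$; the vector field is horizontal there ($\lambda'>0$, $\mu'=0$) and transverse to the curve, so the two branches are joined into one orbit. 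The step requiring the most care is this connectedness argument for $A<0$, together with verifying that each level curve is traversed maximally (no interior accumulation). Once this is in hand, strict monotonicity of $\lambda$ — since $\lambda'=2\lambda^2+\mu^2>0$ off the origin — rules out periodic orbits, each orbit is semi-algebraic in $\mathcal{M}_3$ by construction, and the only possible limit point in $\mathcal{D}$ is the origin, which lies in the closure precisely when $A\geq0$.
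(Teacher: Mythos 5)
Your proposal is correct and follows essentially the same route as the paper: the same $\hat X_{d_{\lambda,\mu}}$, the same reduced ODE $\lambda'=\mu^2+2\lambda^2$, $\mu'=3\lambda\mu$, and the same first integral $(\lambda^2-\mu^2)^3\mu^{-4}$ (the paper imports it from $\mathcal{M}_2$ via $k=2\lambda$, $h=\sqrt{\mu^2-\lambda^2}$ and then checks it on all of $\{\mu\neq0\}$, whereas you verify it directly --- an immaterial difference), followed by the same count of connected components of the level sets. One wording slip: at the vertex $(0,\sqrt{-A})$ of the $A<0$ curve the vector field is \emph{tangent} to the level curve, not transverse (it must be, since the curve is a level set of a first integral); the correct justification for joining the two branches is that the vertex is a regular point of both the flow and the level set, so the orbit passes through it --- which is what your argument in substance establishes.
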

\begin{proof}
At $d_{\lambda,\mu}$, the intrinsic torsion is
\[\beta=0, f=\lambda, g=0, \omega^-=\mu(e^{12}-e^{34}), \sigma^-_k=0\]
So
\[
 \hat X_d=\diag\left(
-\frac12(\mu+\lambda), -\frac12(\mu+\lambda) , \frac12(\mu-\lambda),\frac12(\mu-\lambda),\lambda\right).
\]
Hence
\[
\tilde X_{d_{\lambda,\mu}}= \left(0,0,0,0, (\mu^2+2\lambda^2)(e^{12}+e^{34})+3\lambda\mu(e^{12} -e^{34})\right).
\]
The resulting equations are
\[\lambda'=\mu^2+2\lambda^2,\quad \mu'=3\lambda\mu.\]
It is easy to see that the only critical point is the origin. Moreover the invariant set $\{\mu=0\}$ contains two other orbits, namely the half-lines $\pm\lambda>0$.

When $\mu^2-\lambda^2>0$, we can reduce to the case of $\mathcal{M}_2$ by
\[k=2\lambda, h=\sqrt{\mu^2-\lambda^2};\]
by Theorem~\ref{thm:evolveM2}, $h^3/(k^2+4h^2)$ is a first integral, which in terms of $\lambda$ and $\mu$ gives
\[(\lambda^2-\mu^2)^3\mu^{-4};\]
one can check directly that this is a first integral on all of $\mu\neq0$.
Up to the symmetries, we can assume that $\mu>0$.
Writing the curve as
\[\lambda^2=\mu^{4/3}(A+\mu^{2/3}),\]
we see that  it has two connected components in the half-plane $\mu>0$ if $A\geq0$, and one otherwise. In the first case, we can use the symmetries and assume $\lambda>0$.

The facts that all orbits are non-periodic and that the only limit point is the origin are now obvious.
\end{proof}

\begin{remark}
The non-existence of periodic orbits can also be seen as a consequence of the Cheeger-Gromoll splitting theorem (see \cite{Besse}). Indeed a periodic orbit gives a solution to the evolution equations defined on all of $\R$, meaning that the $6$-manifold is complete and contains a line; by the Cheeger-Gromoll splitting theorem, this means that it is a product. Then the Weingarten tensor $-Q_{P_t}$ is zero: this implies that the orbit consists of a single point.
\end{remark}
\begin{remark}
Multiplication by a scalar in $\mathcal{D}$ amounts to a change in time scale in terms of the evolution flow, and an overall scale change in terms of the resulting six-dimensional metric. One could therefore eliminate one parameter and consider these metrics up to rescaling. The disadvantage of doing so would be losing track of the relations between the orbits.
\end{remark}
\section{Examples}
\label{sec:examples}
In this section we  show with examples that the classification of orbits in terms of $\mathcal{D}$ is sufficient to determine the geometric properties of the corresponding $6$-dimensional metrics.

\begin{example}
In this first example we consider the orbits $O_l$ in $\mathcal{M}_2$, and show how the corresponding one-parameter family of $\SU(2)$-structures on the Lie algebra $(0,0,0,0,12)$ can be recovered. 

The generic element of $O_l$ has the form
\[d_{y,k,\mu}=\bigl(0, 0,0, -ye^{34}-\mu e^{35},ke^{34}+y e^{35}\bigr),\quad y^2=k\mu.\]
We parametrize $O_l$  by setting
\[\mu=s\cos^2\theta, \quad k=s\sin^2\theta, \quad y=s\sin\theta\cos\theta,\]
where $s=\mu+k$ and $\theta$ is in $[0,\pi/2)$. Then
\[d_{s,\theta}=\bigl(0, 0,0, -s\sin\theta\cos\theta e^{34}-s\cos^2\theta e^{35},s\sin^2\theta e^{34}+s\sin\theta\cos\theta e^{35}\bigr).\]
From the evolution equations $s'=\frac32s^2$; up to time translation, this gives
\[s=\frac2{1-3t}.\]
By the calculations in the proof of Theorem~\ref{thm:evolveM2},
\[\hat X_{d(t)}=\frac{1}{1-3t}\left(\begin{array}{ccccc}0&0&0&0&0\\0&0&0&0&0\\0&0&-1&0&0\\0&0&0&\cos 2\theta&-\sin 2\theta\\0&0&0&-\sin2\theta &-\cos2\theta \end{array}\right).\]
The eigenspaces of this matrix do not depend on $t$; in particular, setting
\[E(t)=\cos\theta \,e^4(t) -\sin\theta\, e^5(t), \quad F(t)=\sin\theta\, e^4(t)+\cos\theta\, e^5(t),\]
we find
\[E'(t)=\frac{1}{1-3t}E(t), \quad F'(t)=\frac{-1}{1-3t}F(t),\]
hence
\[E(t)=(1-3t)^{-1/3}E(0), \quad F(t)=(1-3t)^{1/3}F(0).\]
Similarly, we compute
\[e^1(t)=e^1(0), \quad e^2(t)=e^2(0), \quad e^3(t)=(1-3t)^{1/3}e^3(0).\]
Now observe that $F(0)$ is closed and $dE(0)=-2e^3(0)\wedge F(0)$. Thus, up to an automorphism we can assume
\[\eta^1=F(0), \quad \eta^2=2e^3(0), \quad \eta^3=e^1(0), \quad \eta^4=e^2(0), \quad \eta^5=E(0).\]
Then
\begin{gather*}
e^1(t)=\eta^3, \quad e^2(t)=\eta^4, \quad e^3(t)=\frac12(1-3t)^{1/3}\eta^2 \\
e^4(t)=\cos\theta(1-3t)^{-1/3}\eta^5 +\sin\theta (1-3t)^{1/3}\eta^1\\
e^5(t)=-\sin\theta(1-3t)^{-1/3}\eta^5+\cos\theta(1-3t)^{1/3}\eta^1
\end{gather*}
In terms of the defining forms $(\alpha,\omega_i)$, the corresponding one-parameter family of $\SU(2)$-structures is given by
\begin{gather*}
\alpha(t)= -\sin\theta(1-3t)^{-1/3}\eta^5+\cos\theta(1-3t)^{1/3}\eta^1\\
\omega_1(t)=\eta^{34}+\frac12\cos\theta\eta^{25} -\frac12\sin\theta (1-3t)^{2/3}\eta^{12}\\
\omega_2(t)=-\frac12(1-3t)^{1/3}\eta^{23} -\cos\theta(1-3t)^{-1/3}\eta^{45} +\sin\theta (1-3t)^{1/3}\eta^{14}\\
\omega_3(t)=\cos\theta(1-3t)^{-1/3}\eta^{35} -\sin\theta (1-3t)^{1/3}\eta^{13}-\frac12(1-3t)^{1/3}\eta^{24}
\end{gather*}
One can easily verify that the evolution equations are indeed satisfied.
\begin{remark}
In this special case the infinitesimal gauge transformation has constant eigenspaces, which is why the evolution has a ``diagonal'' form. The other orbits with this property are those contained in $O_1$, $O_2^A$ (whose corresponding metrics are studied in \cite{ContiSalamon}) and $\mathcal{M}_3$.
 \end{remark}
\end{example}

\begin{example}
This example shows that it is not necessary to integrate the infinitesimal gauge transformation in order to determine whether the $6$-dimensional metric corresponding to an orbit is reducible or its holonomy equals $\SU(3)$.

Given a solution of the hypo evolution equations on a Lie algebra $\lie{g}$, let $g_t$ be the underlying one-parameter family of metrics on $\lie{g}$; let $\Omega_t\in\Lambda^2\lie{g}^*\otimes\so(5)$ be the corresponding one-parameter family of curvature forms. Let $g$ be the corresponding generalized cylinder metric on $G\times(a,b)$; by the holonomy condition, the curvature form at $(e,t)$ is a map
\[\Omega_t\colon\Lambda^2(\lie{g}\oplus\R)\to\su(3),\]
and by invariance the one-parameter family of forms $\Omega_t$ determines the curvature of $g$.

By the last remark of Section~\ref{sec:gauge}, $Q_{P_t}$ coincides with minus the Weingarten tensor; denoting by
\[\Omega_t^5\colon\Lambda^2\R^5\to\so(5)\]
the curvature forms of the five-dimensional metrics $g_t$, the Gauss equation gives
\begin{equation}
 \label{eqn:tangentialcurvature}
\Omega_t^{\text{tang}} = \Omega^5_t-\sum_{i,j} Q_{P_t}(e_i)\wedge Q_{P_t}(e_j) e_{i}\otimes e^j,
\end{equation}
where the tangential part $\Omega_t^{\text{tang}}$ of $g$ is related to $\Omega_t$ by the diagram
\[\xymatrix{\Lambda^2(\lie{g}\oplus\R)\ar[r]^{\Omega_t}&\so(6)\ar[d]^{i^T}\\ \Lambda^2\lie{g}\ar[u]^i\ar[r]^{\Omega_t^{\text{tang}}}&\so(5)}.\]
By the Bianchi identity $\Omega_t$ is symmetric. Hence
\[\im (\Omega_t\circ i)^\perp = \ker i^T\circ\Omega_t=\ker \Omega_t,\]
where we have used the fact that $\im\Omega_t\subset\su(3)$ and $i^T\colon\su(3)\to\so(5)$ is injective. It follows that
\[\dim\Omega_t^{\text{tang}}=\dim \im \Omega_t\circ i=\dim\im\Omega_t.\]
By the Ambrose-Singer theorem the metric has holonomy equal to $\SU(3)$ if and only if $\Omega_t^{\text{tang}}$ has rank $8$ for some $t$.

For example, in the case of $\mathcal{M}_3$, it follows easily from \eqref{eqn:tangentialcurvature} that the image of $\Omega_t^{\text{tang}}$  is spanned  by
\begin{gather*}
\frac{1}{2}  e^{34} {(\lambda^{2}-\mu^{2})}+ {(\lambda+\mu)^{2}} e^{12}, \quad  e^{24} {(\lambda^{2}-\mu^{2})}+  e^{13} {(\lambda^{2}-\mu^{2})},\\
  -e^{14} {(\lambda^{2}-\mu^{2})}+  e^{23} {(\lambda^{2}-\mu^{2})}, \quad  e^{15} {(3 \lambda^{2}+4  \mu \lambda+\mu^{2})}, \quad
 e^{25} {(3 \lambda^{2}+4  \mu \lambda+\mu^{2})}, \\
\frac{1}{2}  e^{12} {(\lambda^{2}-\mu^{2})}+ e^{34} {(\lambda- \mu)^{2}}, \quad
  e^{35} {(3 \lambda^{2}-4  \mu \lambda+\mu^{2})}, \quad
  e^{45} {(3 \lambda^{2}-4  \mu \lambda+\mu^{2})}.
\end{gather*}
Thus, the image is $8$-dimensional if and only if
\begin{equation}
 \label{diseqn:irreducible}
3 \lambda^{2}+\mu^{2}\neq\pm 4\mu\lambda, \quad \lambda\mu(\lambda^2-\mu^2)\neq0.
\end{equation}
It follows that the orbits on which $\lambda=\mu$ and $\mu=0$ do not give rise to irreducible six-dimensional metrics. On the other hand, if
\[(\lambda^2-\mu^2)^3=A\mu^4, \lambda,\mu>0, \quad A\neq0,\]
the inequalities \eqref{diseqn:irreducible} are generically satisfied. Hence the six-dimensional metric has holonomy equal to $\SU(3)$ in this case.

This can also be verified directly by integrating the metric as in the first example. For instance, if $A=-1$, setting 
\[s=\operatorname{arsinh}\lambda\mu^{-2/3},\]
whence
\[\mu=(\cosh s)^3, \quad \lambda=\sinh s\cosh^2 s, \quad s'=(\cosh s)^{-3},\]
one finds a metric on $G\times(0,+\infty)$, where $G$ is the Lie group with Lie algebra $(0,0,0,0,12+34)$ and an orthonormal frame is given by
\begin{multline*}
  \frac1{\cosh s \sqrt{1+\tanh s}} \eta^1, \frac1{\cosh s \sqrt{1+\tanh s}}  \eta^2 , 
  \sqrt{1+\tanh s}\,  \eta^3, 
-\sqrt{1+\tanh s} \,\eta^4, \\
\cosh s \,\eta^5, dt=\frac1{\cosh^{3} s} ds.
\end{multline*}

\end{example}
\section{Completeness}
In this section we show that our metrics cannot be extended in a complete way, except in the trivial case. In other words, we prove that every complete manifold with an integrable $\SU(3)$-structure preserved by the  cohomogeneity one action of a five-dimensional nilpotent Lie group is flat. The proof depends on the results of Section~\ref{sec:integrating} in an essential way.

Let $G$ be a $5$-dimensional Lie group acting  with cohomogeneity one on a complete manifold $M$ with an integrable $\SU(3)$-structure; assume that the action preserves the structure. Let $c(t)$ be a geodesic orthogonal to principal orbits. Then the map $(g,t)\to gc(t)$ defines a local diffeomorphism; the pullback to $G\times I$, with $I$ an interval, defines an integrable $\SU(3)$-structure, and therefore a one-parameter family of left-invariant hypo structures on $G$ satisfying the hypo evolution equations. In particular the metric defines an integral curve in $\mathcal{D}$ defined on the interval $I$; conversely, an integral curve in $\mathcal{D}$ defines a cohomogeneity one metric on some product $G\times I$.

If all orbits are five-dimensional, then one can assume $I=\R$;  this means that the one-parameter family of $\SU(2)$-structures is defined on all of $\R$. Otherwise there is an orbit of dimension less than five, called a \dfn{singular special orbit}. Since the action preseves a Riemannian metric, about a singular special orbit $M$ has the form 
\[G\times_H V,\]
where $H$ is the special stabilizer and $V$ its normal isotropy representation (see e.g. \cite{Bredon}). In this case, the interval $I$ has a boundary point $b\in\R$, with $c(b)$ lying in the singular special orbit;  we shall say that the metric defined by the integral curve in $\mathcal{D}$ \dfn{extends across $b$ with special stabilizer $H$}.

We can now state the main result of this section.
\begin{theorem}
\label{thm:main}
There are no singular special orbits on any six-manifold with an integrable $\SU(3)$-structure preserved by the cohomogeneity one action of a nilpotent Lie group of dimension five.
\end{theorem}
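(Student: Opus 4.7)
The plan is to reduce the theorem to a local computation near any would-be singular orbit and then rule out the necessary asymptotics of $\hat X_{d(t)}$ using the explicit eigenvalue structure of the three families $\mathcal{M}_1,\mathcal{M}_2,\mathcal{M}_3$ from Section~\ref{sec:integrating}. Since $G$ acts by isometries on the complete manifold $M$, the action is proper, so the stabilizer $H$ of any singular orbit is compact; nilpotence of $\mathfrak{g}$ forces $\Ad(H^0)$ to be simultaneously compact and unipotent, hence trivial, so $H^0$ is a central torus in $G$. The principal stabilizer $K\subset H$ is discrete, because cohomogeneity one means the principal orbit has dimension $\dim G=5$, and $H^0/K$ must act freely and transitively on a sphere in the slice representation $V$; since the only Lie-group spheres are $S^0$, $S^1$, $S^3$, and $S^3$ is non-abelian, we get $H^0\cong S^1$, $\dim V=2$, and every singular orbit has codimension two. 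A direct computation combining the slice theorem, the Koszul formula and the identification $Q_{P_t}=-W$ from the last remark of Section~\ref{sec:gauge} then shows that along the corresponding integral curve $d(t)$, as $t\to b$ the operator $\hat X_{d(t)}$ must develop exactly one eigenvalue of the form $-1/(b-t)+O(1)$ (the direction of the collapsing $S^1$-fibre), while the remaining four eigenvalues stay bounded.

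The heart of the proof is to show that this single-blowup condition fails on every orbit of the evolution flow in each family. For $\mathcal{M}_3$, the diagonal form of $\hat X$ exhibits eigenvalues $-(\mu+\lambda)/2$ and $(\mu-\lambda)/2$ each with multiplicity two and $\lambda$ with multiplicity one; boundedness of $-(\mu+\lambda)/2$ and $(\mu-\lambda)/2$ forces boundedness of $\lambda$, so the single-blowup criterion cannot be met. For $\mathcal{M}_1$, $\hat X$ splits as the symmetric $4\times 4$ block $A$ on $\Span{e^1,\dotsc,e^4}$ and the scalar $(k+\mu)/2$ on $\Span{e^5}$; a direct computation gives $\tr A=0$, which forbids a single divergent eigenvalue inside $A$, whereas divergence of $(k+\mu)/2$ forces at least one of $k,\mu$ to be unbounded, making a diagonal entry of $A$ among $\pm k/2,\pm\mu/2$ unbounded and hence, by symmetry of $A$, forcing $\norm{A}_{op}\to\infty$ and additional divergent eigenvalues.

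For $\mathcal{M}_2$ the analysis is slightly more subtle. Here $\hat X$ decomposes into a $2\times 2$ block on $\Span{e^1,e^3}$ of trace $-T/2$ and determinant $-S^2/4$ (with $T=k+\mu$, $S=h+\lambda$) and a trace-zero $3\times 3$ block on $\Span{e^2,e^4,e^5}$. A blowup inside the $3\times 3$ block immediately forces a second one by the trace-zero constraint. A blowup inside the $2\times 2$ block forces $T=2/(b-t)+O(1)$ by matching the trace; but the reduced ODE $S'=ST$, $T'=2S^2+\tfrac32 T^2$ derived in the proof of Theorem~\ref{thm:evolveM2} predicts instead the leading scaling $T\sim \tfrac{2}{3(b-t)}$, a numerical contradiction.

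The main obstacle in writing this out in full is the careful justification of the single-blowup criterion: one has to unfold the slice theorem together with the Koszul formula to identify precisely which directions in $\mathfrak{g}$ collapse and at which rate, and to verify that subleading $O(1)$ corrections do not disturb the dominant asymptotics used in the three cases above. With the criterion in place, the three cases rule out every orbit in the classification, so the only remaining possibility is the critical orbit at the origin of $\mathcal{D}$, which corresponds to the abelian Lie algebra $\R^5$ and yields the flat product metric on $\R^5\times\R$; this is already complete with no added orbit needed, and the theorem follows.
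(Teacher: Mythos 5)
Your overall strategy coincides with the paper's: reduce to the integral curves in $\mathcal{D}$, extract an asymptotic constraint on $\hat X_{\sigma(t)}$ from the requirement that the metric close up smoothly at $t=b$, and kill each family $\mathcal{M}_1,\mathcal{M}_2,\mathcal{M}_3$ by an explicit computation. But the technical device you rest everything on — the ``single-blowup criterion'' that exactly one eigenvalue of $\hat X_{d(t)}$ behaves like $-1/(b-t)+O(1)$ while the other four stay bounded — is asserted, not proved, and you yourself flag its justification as the main obstacle. This is a genuine gap, not a routine verification. It is a \emph{pointwise} statement about the spectrum of the shape operator together with a precise rate, and establishing it requires the full smoothness analysis of the metric on the tube $G\times_HV$ in an adapted gauge (parity conditions at the singular orbit, control of the rotating eigenvector of the collapsing direction, the $O(1)$ error). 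Worse, your $\mathcal{M}_2$ case genuinely \emph{needs} the rate: the contradiction there is the comparison $T=2/(b-t)+O(1)$ against the ODE bound $T\le 2/(3(b-t))$, so a weaker statement such as ``exactly one eigenvalue is unbounded'' would not suffice. (Your $\mathcal{M}_1$ and $\mathcal{M}_3$ cases only use the count of divergent eigenvalues, which is easier but still pointwise.) Two smaller problems: you invoke completeness of $M$ to get compactness of $H$, but the theorem does not assume completeness (the paper derives $H\cong\R$ or $S^1$ directly from nilpotence and transitivity on a sphere); and boundedness of the non-collapsing eigenvalues is really only guaranteed in an integrated sense by continuity of the metric, not pointwise, unless you assume more regularity than you have justified.

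For comparison, the paper's proof is engineered precisely to avoid needing any rate or pointwise spectral information. Lemma~\ref{lemma:intexp} only uses that for an $\hat X$-invariant subspace $W$, the determinant $\det g|_W(t)$ satisfies $(\det g|_W)'=\operatorname{tr}(\hat X|_W)\det g|_W$, so $\int_{t_0}^b\operatorname{tr}(\hat X|_W)$ is $-\infty$ or finite according to whether $\lie{h}\subset W$ — and this requires nothing beyond continuity of the pulled-back metric on $G\times_HV$ and the identification of $\ker g(b)$ with $\lie{h}$. Combined with Lemma~\ref{lemma:stabilizer} ($\lie{h}$ is a one-dimensional ideal, hence central), the case analysis becomes a comparison of integrated traces over explicit constant invariant subspaces ($\R^5$, $\Span{e_5}$, $\Span{e_1,e_3}$), with no eigenvalue asymptotics at all; in the $\mathcal{M}_1$ case the centrality of $e_5$ does the work your spectral argument tries to do. If you want to salvage your route, the cleanest fix is to replace your criterion by the paper's integrated-trace lemma; as written, the proposal's core hypothesis is unestablished and the $\mathcal{M}_2$ case would collapse without it.
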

Notice that $M$ is not required to be complete in this theorem; if one adds this assumption, the following ensues:
\begin{corollary}
Let $M$ be a complete six-manifold with an integrable $\SU(3)$-structure preserved by the cohomogeneity one action of a nilpotent Lie group of dimension five. Then $M$ is flat.
\end{corollary}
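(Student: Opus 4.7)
My plan is to combine Theorem~\ref{thm:main} with the classification of evolution orbits from Section~\ref{sec:integrating} and apply the Cheeger-Gromoll splitting theorem as suggested in the remark at the end of that section. The strategy is to show that the only globally defined integral curve of $\tilde X$ corresponds to the abelian Lie algebra, where the induced six-dimensional metric is manifestly flat.

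First I would use Theorem~\ref{thm:main} to rule out singular special orbits, so that every $G$-orbit in $M$ is principal. The orbit space $M/G$ is then a one-dimensional manifold without boundary; by completeness it is either $\mathbb{R}$ or $S^1$. In the $S^1$ case, the induced integral curve $d(t)$ of $\tilde X$ in $\mathcal{D}$ would be periodic modulo $\LieG{U}(2)$, and since $\tilde X$ is $\LieG{U}(2)$-invariant this would give a genuine periodic orbit of $\tilde X$. But Theorems~\ref{thm:evolveM1}--\ref{thm:evolveM3} exclude periodic orbits. Hence $M$ has the form $G\times\mathbb{R}$ with a generalized cylinder metric $g_t+dt^2$, and the curve $d(t)$ coming from a geodesic transverse to the orbits is defined on all of $\mathbb{R}$.

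Next I invoke the Cheeger-Gromoll splitting theorem exactly as in the remark at the end of Section~\ref{sec:integrating}: the complete manifold $G\times\mathbb{R}$ contains the line $\{e\}\times\mathbb{R}$, so it splits isometrically as $N^5\times\mathbb{R}$. Then the slices $G\times\{t\}$ are totally geodesic, so their Weingarten tensors vanish identically; by the final remark of Section~\ref{sec:gauge} this is equivalent to $Q_{P_t}\equiv 0$, and Theorem~\ref{thm:gauge} then forces the family $P_t$ to be constant in $t$. Hence $d(t)$ is constantly equal to a critical point of $\tilde X$. From the explicit computations in Section~\ref{sec:integrating}, the only critical point of $\tilde X$ on $\mathcal{D}$ is the origin, corresponding to the abelian Lie algebra $\lie{g}=\mathbb{R}^5$.

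Finally, on the abelian Lie group $G=\mathbb{R}^5$ the Chevalley-Eilenberg differential on $\lie{g}^*$ is identically zero, so every left-invariant form is closed; combined with $Q_{P_t}\equiv 0$, all the intrinsic torsion components appearing in \eqref{eqn:hypoIT} vanish and the one-parameter family $(\alpha,\omega_i)$ on $\mathbb{R}^5$ is constant in $t$. The resulting integrable $\SU(3)$-structure on $M=\mathbb{R}^6$ is then translation-invariant for the full abelian translation group $\mathbb{R}^6$, so the metric is the flat Euclidean metric. The main technical step is the exclusion of the $S^1$ orbit-space case in the first paragraph, which depends crucially on the no-periodic-orbits conclusion of Section~\ref{sec:integrating}; once this is in hand, the remainder of the argument is a clean application of Cheeger-Gromoll combined with the gauge-theoretic reformulation of Section~\ref{sec:gauge}.
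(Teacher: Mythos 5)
Your proof is correct and follows the paper's own strategy: Theorem~\ref{thm:main} to eliminate singular orbits, reduction to a generalized cylinder $G\times\R$, and the Cheeger--Gromoll splitting theorem applied to the normal geodesic viewed as a line. There are two points where you deviate from the paper's argument, both worth noting. First, the paper does not split into the cases $M/G\cong\R$ and $M/G\cong S^1$: it simply pulls the structure back along the local diffeomorphism $(g,t)\mapsto gc(t)$ and works on $G\times\R$ from the start, which makes your $S^1$ exclusion unnecessary. That is fortunate, because as stated your $S^1$ step is slightly too quick: periodicity of $d(t)$ \emph{modulo} $\LieG{U}(2)$ (i.e.\ $d(t+T)=u\cdot d(t)$ for some fixed $u\in\LieG{U}(2)$) does not immediately yield a genuinely periodic orbit of $\tilde X$ --- it only forces the orbit to be bounded and recurrent, and one must then invoke the limit-point classification of Theorems~\ref{thm:evolveM1}--\ref{thm:evolveM3} (or the norm-preservation of the $\LieG{U}(2)$-action) to derive a contradiction. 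Second, your endgame differs from the paper's: after the splitting forces $Q_{P_t}\equiv 0$, you argue that $\hat X_{d(t)}=0$ pins $d(t)$ at the unique critical point, namely the origin of $\mathcal{D}$, so $\lie{g}$ is abelian and $M$ is Euclidean $\R^6$; the paper instead observes that $\hat X$ is simultaneously symmetric and antisymmetric, hence zero, so all defining forms are constant and closed, making $G$ Ricci-flat and therefore flat (implicitly using Milnor's result on left-invariant metrics on nilpotent groups). Your route leans more heavily on the explicit computations of Section~\ref{sec:integrating}, while the paper's is independent of the orbit classification at this stage; both are valid.
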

\begin{proof}
By Theorem~\ref{thm:main}, all orbits are five-dimensional. Consequently, the map \[G\times\R\to M,\quad  (g,t)\to gc(t)\] is a local diffeomorphism, and the $\SU(3)$-structure can be pulled back. We can therefore  assume $M=G\times\R$. 

The geodesic $c(t)=(e,t)$ is a line on $M=G\times\R$; by the Cheeger-Gromoll theorem, the metric on $M$ is a product metric. This means that $\hat X$ preserves the metric: so it is both symmetric and antisymmetric, hence zero. So the evolution is trivial, and the defining forms $\alpha(t)$, $\omega_i(t)$ are constant in time and closed. This implies that $G$ is Ricci-flat, hence flat; so the product $G\times\R$ is also flat.
\end{proof}

In order to prove Theorem~\ref{thm:main}, we will need two lemmas.
\begin{lemma}
\label{lemma:stabilizer}
Suppose  $M$ has an $\SU(3)$-structure preserved by the cohomogeneity one action of a nilpotent five-dimensional Lie group $G$. Then the Lie algebra of each stabilizer is either trivial or an ideal of $\lie{g}$ isomorphic to $\R$.
\end{lemma}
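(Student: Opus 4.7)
The plan is to show two things about any stabilizer $H\subset G$ at a point $p\in M$: first, that $k=\dim\lie{h}\in\{0,1\}$, and second, that when $k=1$ the subalgebra $\lie{h}$ is an ideal. Since the only one-dimensional Lie algebra is $\R$, these two facts together yield the statement. The two reductions correspond to the two hypotheses of the lemma — nilpotency of $\lie{g}$ and preservation of the $\SU(3)$-structure (hence of a Riemannian metric) — and each step uses both.

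For the dimension bound, I would first invoke the slice theorem for cohomogeneity one actions: a neighbourhood of $Gp$ is equivariantly modelled on the twisted product $G\times_H N_p$, where $N_p$ is the normal space, of dimension $k+1$. Since $M/G$ is one-dimensional, $H$ acts on $N_p$ with one-dimensional orbit space, which forces the identity component $H_0$ to act transitively on the unit sphere $S^k\subset N_p$ whenever $k\geq1$. The isotropy representation $\rho\colon H_0\to O(N_p)=O(k+1)$ sends $\lie{h}$ into $\so(k+1)$, and $\rho(\lie{h})$ is nilpotent because $\lie{g}$ is nilpotent. The key algebraic remark is that any nilpotent Lie subalgebra of $\so(n)$ is abelian: for $X\in\so(n)$, the operator $\ad X$ on $\so(n)$ is semisimple (its eigenvalues are differences of the purely imaginary eigenvalues of $X$), so its restriction to a subalgebra on which it acts nilpotently must vanish. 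Consequently, $\rho(\lie{h})$ is abelian and the closure of $\rho(H_0)$ inside $O(k+1)$ is a torus acting transitively on $S^k$. Since the only spheres that occur as homogeneous spaces of compact tori are $S^0$ and $S^1$ (the quotient of a torus by a closed subgroup is always a compact abelian Lie group, hence itself a torus up to finite factor), I conclude $k\leq1$.

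For the second step, assume $k=1$ and let $X$ generate $\lie{h}$. The fundamental vector field $X^\ast$ is Killing (as $G$ acts by isometries) and vanishes at $p$, so its linearisation $\rho(X)$ belongs to $\so(T_pM)$. It preserves the splitting $T_pM=T_p(Gp)\oplus N_p$, and under the canonical identification $T_p(Gp)\cong\lie{g}/\lie{h}$ given by $Y\mapsto Y^\ast_p$, its restriction to $T_p(Gp)$ is exactly the induced operator $\overline{\ad X}\colon\lie{g}/\lie{h}\to\lie{g}/\lie{h}$. This restriction is simultaneously nilpotent (since $\ad X$ is nilpotent on $\lie{g}$ by nilpotency of $\lie{g}$, and nilpotency passes to quotients) and skew-symmetric with respect to the induced Euclidean metric on $\lie{g}/\lie{h}$, hence semisimple over $\C$. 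A linear map that is both nilpotent and semisimple is zero, so $[X,\lie{g}]\subset\lie{h}$, which is exactly the statement that $\lie{h}$ is an ideal.

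The main obstacle I anticipate is the first step, and specifically the need to be careful about disconnectedness of $H$: one must pass to $H_0$, use the slice theorem to see that it still acts transitively on the normal sphere, and handle cleanly the passage from $\rho(\lie{h})$ abelian to the closure of $\rho(H_0)$ being a compact torus, together with the classification of tori that act transitively on spheres. The second step is then essentially formal, resting on the tension between two consequences of our hypotheses: nilpotency of $\lie{g}$ forces $\ad X$ to be nilpotent, while $\SU(3)$-invariance forces $\rho(X)$ to be skew-symmetric, and these can coexist only when both vanish. As a byproduct one obtains, from the fact that a $1$-dimensional ideal of a nilpotent Lie algebra is automatically central (since $\ad Y(X)\in\R X$ combined with nilpotency of $\ad Y$ forces $\ad Y(X)=0$), that $\lie{h}$ actually lies in the centre of $\lie{g}$, though this refinement is not required for the statement.
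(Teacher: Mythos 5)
Your proof is correct and follows essentially the same route as the paper: the slice theorem gives a transitive action of the stabilizer on the normal sphere, nilpotency forces that sphere to be $S^1$, and the ideal property follows because the induced operator $\ad X$ on $\lie{g}/\lie{h}$ is simultaneously nilpotent and skew-symmetric, hence zero. The only cosmetic difference is in the dimension bound, where the paper notes that a nilpotent group acting with discrete stabilizer covers the sphere and so must cover $S^1$, while you pass through the closure of the isotropy image being a torus; both arguments work.
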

\begin{proof}
Let $H$ be the stabilizer of $p\in M$; if $H$ is discrete, there is nothing to prove.

Otherwise, $Gp$ is a singular special orbit with a neighbourhood of the form $G\times_H V$, where 
\[T_p M =  T_p(Gp)\oplus V\]
is an orthogonal direct sum  and $H$ acts on $V$ via the isotropy representation. Then $H$ acts on $V$ with cohomogeneity one and discrete stabilizer.  Moreover $H$ acts transitively on a sphere, so it is a nilpotent Lie group that covers a sphere, and therefore necessarily isomorphic to $\R$ or $S^1$. Let $X$ be a generator of its Lie algebra $\lie{h}$. As representations of $H$, 
\[T_p (Gp) \cong \frac{\lie{g}}{\lie{h}},\]
which therefore has an invariant metric. Since the action of $H$ on the first factor is induced by the adjoint action, $\ad X$ is both nilpotent and antisymmetric, therefore zero. It follows that $\lie{h}$ is an  ideal of $\lie{g}$.
\end{proof}

\begin{lemma}
\label{lemma:intexp}
Let $\sigma\colon I\to\mathcal{D}$ be a maximal integral curve for the hypo evolution flow, and assume the induced cohomogeneity one metric extends across the boundary point $b$ of $I$ with special stabilizer $H$. Suppose that $W\subseteq\R^5$ is a linear subspace invariant under $\hat X_{\sigma(t)}$ for all $t$. Then either
\begin{align*}
\int_{t_0}^{b} \operatorname{tr}((\hat X_{\sigma(t)})|_{W}&=-\infty, \quad\lie{h}\subset W; \quad \text{or}\\
\abs{\int_{t_0}^{b} \operatorname{tr}((\hat X_{\sigma(t)})|_{W}}&<+\infty,\quad \lie{h}\not\subset W.
\end{align*}
\end{lemma}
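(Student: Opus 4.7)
The plan is to rewrite $\int_{t_0}^{b}\operatorname{tr}(\hat X_{\sigma(t)}|_{W})$ as the logarithm of the $g_t$-volume of a fixed subspace of $\lie{g}$, and then to analyse the behaviour of that volume as $t\to b$ using the degeneration of $g_t$ at the singular orbit.

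By the last proposition of Section~\ref{sec:gauge}, I can choose a one-parameter family of adapted coframes $u(t)\colon\R^5\to\lie{g}^*$ with $u'(t)=u(t)\circ\hat X_{\sigma(t)}$. Passing to the dual frame $v(t)=(u(t)^T)^{-1}$ and writing $v(t)=v(t_0)\circ g(t)$ with $g(t_0)=\id$, symmetry of $\hat X$ yields $g'(t)=-g(t)\,\hat X_{\sigma(t)}$. Since $\hat X_{\sigma(t)}$ leaves $W$ invariant for every $t$, so does the invertible map $g(t)$, and Liouville's formula gives
\[\det\bigl(g(t)|_{W}\bigr)=\exp\left(-\int_{t_0}^{t}\operatorname{tr}(\hat X_{\sigma(\tau)}|_{W})\,d\tau\right).\]
In particular the subspace $\lie{w}:=v(t)(W)=v(t_0)(W)$ of $\lie{g}$ is independent of $t$. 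Picking a standard-orthonormal basis $w_1,\dots,w_k$ of $W\subset\R^5$, setting $Y_i=v(t_0)(w_i)$, and using that $v(t)$ is an isometric isomorphism $(\R^5,\mathrm{std})\to(\lie{g},g_t)$, a direct computation via $v(t)^{-1}(Y_i)=g(t)^{-1}(w_i)\in W$ yields
\[\det\bigl(g_t(Y_i,Y_j)\bigr)_{ij}=\det(g(t)|_{W})^{-2}=\exp\left(2\int_{t_0}^{t}\operatorname{tr}(\hat X_{\sigma(\tau)}|_{W})\,d\tau\right),\]
so the $g_t$-volume of the fixed parallelepiped $Y_1\wedge\dots\wedge Y_k\in\Lambda^k\lie{g}$ equals $\exp(\int_{t_0}^{t}\operatorname{tr}(\hat X_{\sigma(\tau)}|_{W})\,d\tau)$.

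Next I would analyse the limit $t\to b$ geometrically. By Lemma~\ref{lemma:stabilizer}, $\lie{h}$ is a one-dimensional ideal, and $c(t)$ approaches a point $p$ of a singular orbit $Gp$. Since $g_t$ on $\lie{g}$ is the pullback of $g_M$ via the infinitesimal action $Y\mapsto Y^\ast_{c(t)}$, it converges as $t\to b$ to the non-negative symmetric form $g_b(Y,Z):=g_M(Y^\ast_p,Z^\ast_p)$, whose kernel consists of exactly those $Y$ with $Y^\ast_p=0$, namely $\lie{h}$. Consequently $\det(g_t|_{\lie{w}})\to\det(g_b|_{\lie{w}})$, and the latter vanishes precisely when $\lie{w}\cap\lie{h}\neq 0$.

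To conclude, since $\dim\lie{h}=1$ the condition $\lie{w}\cap\lie{h}\neq 0$ is equivalent to $\lie{h}\subset\lie{w}$, which under the time-independent identification $v(t_0)$ coincides with $\lie{h}\subset W$. If $\lie{h}\subset W$ then $\det(g_t|_{\lie{w}})\to 0$ and the integral must diverge to $-\infty$; if $\lie{h}\not\subset W$ then $g_b|_{\lie{w}}$ is positive-definite, so $\det(g_t|_{\lie{w}})$ converges to a strictly positive limit and the integral stays bounded as $t\to b$. The main obstacle is the first step: transporting the purely linear-algebraic $\hat X$-invariance of $W$ through the gauge equation to realise $\lie{w}\subset\lie{g}$ as a time-independent subspace whose $g_t$-volume is controlled by exactly the integral one wishes to estimate. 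Once this identification is in place, the geometric argument at the singular orbit is the clean way to convert the containment $\lie{h}\subset W$ into the sign of the divergence.
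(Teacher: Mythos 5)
Your proposal is correct and follows essentially the same route as the paper: both reduce the integral to the logarithm of a determinant of the evolving metric restricted to the fixed subspace (via the ODE $g'=\hat X g$ up to a harmless constant factor, which accounts for the extra $2$ in your exponent), and both conclude by observing that this determinant tends to a finite limit as $t\to b$ which vanishes exactly when the subspace contains $\lie{h}$, the kernel of the degenerate limit metric at the singular orbit. Your version merely makes explicit the frame identification $v(t_0)\colon W\to\lie{w}$ and the degeneration argument that the paper states in one line.
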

\begin{proof}
By construction the metric on $G\times_H V$ pulls back to a time-dependent symmetric tensor $g$ on $G$, which degenerates at $t=b$, and satisfies
\[g'(t)=\hat X_{\sigma(t)}g(t).\]
Because $\hat X_{\sigma(t)}$ is symmetric it also leaves $W^\perp$ invariant; this implies that $W$ stays orthogonal to $W^\perp$ for all $t$, and
\[(g'(t)|_W)=\hat X_{\sigma(t)}|_Wg|_W(t).\]
The determinant of $g(t)|_W$ satisfies
\[(\det g(t)|_W) ' = \operatorname{tr}(\hat X_{\sigma(t)}|_W) \det g|_W(t),\]
so for any fixed $t_0$ in $I$, 
\[\det g|_W(b)=\left(\exp \int_{t_0}^b \operatorname{tr}(\hat X_{\sigma(t)}|_W)dt\right)\det g|_W(t_0).\]
It suffices now to observe that $\det g|_W(b)$ may not be infinite, and it is zero if and only if $W$ contains $\lie{h}$.
\end{proof}
This lemma will be mostly used in the case that $W$ is one-dimensional, i.e. that $\hat X$ has an eigenvector not depending on $t$, and $W=\R^5$, in which case necessarily $\lie{h}\subset W$.

\begin{proof}[Proof of Theorem~\ref{thm:main}]
Let $M$ be as in the statement, and suppose $M$ has a singular special orbit; then there is a maximal integral curve $\sigma\colon I\to \mathcal{D}$ whose associated metric extends across a boundary point $b$ of $I$ with special stabilizer $H$. Because the symmetry group is nilpotent, by Theorem~\ref{thm:hierarchy}  we can assume up to $U(2)$ action that the integral curve is contained in one of the families $\mathcal{M}_1$, $\mathcal{M}_2$ and $\mathcal{M}_3$. We will discuss each case separately.

For $\mathcal{M}_1$, observe that by the proof of Lemma~\ref{lemma:family1} 
\[\operatorname{tr}\hat X_{\sigma(t)} = \frac12(\mu(t)+k(t)).\]
So by Lemma~\ref{lemma:intexp} applied to $W=\R^5$, necessarily
\[\int_{t_0}^b (\mu+k)dt=-\infty.\]
Applying the same lemma to $W=\Span{e_5}$, we see that $\lie{h}$ is spanned by $e_5$. By Lemma~\ref{lemma:stabilizer}, it follows that $e_5$ is in the center of $\lie{g}$, and by definition of $\mathcal{M}_1$ this is equivalent to 
\[\lambda=h=k=\mu=0.\]
However, this condition implies that the integral curve is constant and defined on all of $\R$, which is absurd.

\smallskip
In the case of $\mathcal{M}_2$, the explicit formulae appearing in the proof of Theorem~\ref{thm:evolveM2} give
\[\operatorname{tr}\hat X_{\sigma(t)} = -\frac12(\mu+k).\]
So, applying  Lemma~\ref{lemma:intexp} to $W=\R^5$,
\[\int_{t_0}^b -(\mu+k)dt=-\infty.\]
Applying Lemma~\ref{lemma:intexp} to $W=\Span{e_1,e_3}$, we see that $\lie{h}$ is contained in $W$. By Lemma~\ref{lemma:stabilizer}, $\lie{h}$ is a one-dimensional ideal. This gives rise to two possibilities:

\emph{i}) If $x=\lambda=y=\mu=0$, we can apply Lemma~\ref{lemma:intexp} to $\Span{e_5}$, which is a constant eigenspace with eigenvalue $k/2$, and reach a contradiction.

\emph{ii}) If
\[x=h=\lambda=0,\]
the integral curve is some $O_l$ in $\mathcal{M}_2$. Even without using the explicit expression of the metric appearing in Section~\ref{sec:examples}, one can compute 
\[\mu(t)=\frac{\mu(0)}{1-\frac32t}, \quad k(t)=\frac{k(0)}{1-\frac32t}.\]
Therefore the interval of definition is $(-\infty,\frac23)$. Now we observe that $\hat X$ has constant eigenvectors with eigenvalues $0$ and $\pm\frac12(k+\mu)$. Therefore
\[\int_{t_0}^{2/3} \mu+k =  +\infty\]
contradicts Lemma~\ref{lemma:intexp}. 

\smallskip

Finally, for $\mathcal{M}_3$ the trace of $\hat X_{\sigma(t)}$
is $-\lambda$, so Lemma~\ref{lemma:intexp} applied to $W=\R^5$ gives
\[\int_{t_0}^b- \lambda=-\infty.\]
On the other hand $e_5$ is a costant eigenvector with eigenvalue $\lambda$, so applying Lemma~\ref{lemma:intexp} again gives a contradiction.
\end{proof}

\bibliographystyle{plain}
\bibliography{violent}

\small\noindent Dipartimento di Matematica e Applicazioni, Universit\`a di Milano Bicocca, via Cozzi 53, 20125 Milano, Italy.\\
\texttt{diego.conti@unimib.it}

\end{document}